
\documentclass[reqno]{amsart}

\usepackage{amsmath,amssymb,amsthm,amsfonts}
\usepackage[english]{babel}

\usepackage{cancel}
\usepackage{color}
\usepackage{inputenc}
\usepackage{fontenc}
\usepackage{tikz}


\newcommand{\cD}{{\mathcal D}}

\newcommand{\cL}{{\mathcal L}}

\newcommand{\cQ}{{\mathcal Q}}


\newcommand{\beq}{\begin{equation}}
\newcommand{\bey}{\begin{eqnarray}}
\newcommand{\beyy}{\begin{eqnarray*}}
\newcommand{\eeq}{\end{equation}}
\newcommand{\eey}{\end{eqnarray}}
\newcommand{\eeyy}{\end{eqnarray*}}

\newcommand{\myspace}{\qquad\qquad\qquad}

\newtheorem{theorem}{Theorem}[section]
\newtheorem{lemma}[theorem]{Lemma}
\newtheorem{proposition}[theorem]{Proposition}
\newtheorem{remark}[theorem]{Remark}
\newtheorem{remarks}[theorem]{Remarks}

\newtheorem{assumptions}[theorem]{Assumptions}

\newtheorem{problem}[theorem]{Problem}
\numberwithin{equation}{section}

\date{}


\begin{document}

\title[]{Uniqueness for Riccati equations with unbounded operator coefficients}

\author{Paolo Acquistapace}
\address{Universit\`a di Pisa, Dipartimento di Matematica,
Largo Bruno~Pontecorvo 5, 56127 Pisa, ITALY 
}
\email{paolo.acquistapace(at)unipi.it}

\author{Francesca Bucci}
\address{Francesca Bucci, Universit\`a degli Studi di Firenze,
Dipartimento di Matematica e Informatica,
Via S.~Marta 3, 50139 Firenze, ITALY
}
\email{francesca.bucci(at)unifi.it}



\begin{abstract}
In this article we address the issue of uniqueness for differential and algebraic operator Riccati equations,
under a distinctive set of assumptions on their unbounded coefficients.
The class of boundary control systems characterized by these assumptions 
encompasses diverse significant physical interactions,
all modeled by systems of coupled hyperbolic/parabolic partial differential equations.
The proofs of uniqueness provided tackle and overcome the obstacles raised by the peculiar regularity properties of the composite dynamics.
These results supplement the theories of the finite and infinite time horizon linear-quadratic problem devised by the authors jointly with Lasiecka, as the unique solution to the Riccati equation enters the closed loop form of the optimal control.
\end{abstract}

\maketitle


\section{Introduction}
Well-posedness of Riccati equations is a fundamental question within control theory of Partial Differential Equations (PDE).
While the issues of existence and uniqueness for the corresponding solutions are both natural to be
addressed and significant in themselves, when seen in the context of linear-quadratic optimal control uniqueness proves particularly relevant, not exclusively from a theoretical perspective.
This is because it brings about {\em in a univocal manner} the (optimal cost, or Riccati) operator which occurs in the feedback representation of the optimal control, thereby allowing its synthesis.

In the present work focus is on the differential and algebraic Riccati equations
arising from optimal control problems with quadratic functionals for the class of infinite dimensional
abstract control systems dealt with in our earlier works \cite{abl_2005} and \cite{abl_2013},
joint with Lasiecka.
The basic characteristics of these linear systems -- which read as $y'=Ay+Bu$, according to a standard notation  -- are the following: the free dynamics operator $A$ is the infinitesimal generator of a $C_0$-semigroup $\{e^{At}\}_{t\ge 0}$ on the state space $Y$, while the control operator $B$ is {\em unbounded},
meaning that $B$ maps continuously the control space $U$ into a larger functional space than $Y$,
that is the extrapolation space $[\cD(A^*)]'$; see the Assumptions~\ref{a:ipo_0}.
It is well known that the latter is an intrinsic feature of differential systems describing evolutionary PDE with boundary (and also point) control, already recognized by the end of the sixties in the pioneering work of Fattorini \cite{fattorini_1968}.

We require in addition and more specifically that several assumptions on the operators $A, B$ are fulfilled, 
recorded in Section~\ref{s:framework} as Assumptions~\ref{a:ipo_1}.
These are regularity properties that pertain to the operator $B^*e^{A^*t}$, with
respective PDE counterparts. 
It is worth emphasizing here that not only the aforesaid control-theoretic properties do not prescribe analyticity of the semigroup $e^{At}$, in accordance with the fact that the class of systems under consideration -- introduced by these authors with Lasiecka in \cite{abl_2005} --
is inspired by and tailored on systems of coupled hyperbolic-parabolic PDE, subjected to
boundary/interface control.
They are also weaker (and somewhat trickier) than the full {\em singular estimates} for $e^{At}B$ which are known to be equally effective for the study of the quadratic optimal control problem on both a finite and infinite time horizon, as proved in \cite{las-cbms}, \cite{las-trento}, \cite{las-trig-se-1,las-trig-se-2}.
In this respect, we remark that it was first discovered in \cite{bucci-las-thermo_2004} that a very same 
thermoelastic system 
may or may not yield a singular estimate for the corresponding operator $e^{At}B$, depending on which 
combination of boundary conditions/actions (e.g., clamped/Neumann {\em vs} clamped/Dirichlet)
is taken into consideration.

Another distinguishing feature of the coefficients of the Riccati equations under study,
which when the unknown $P$ is time-independent read as 
\begin{equation}\label{e:ARE}
(Px,Az)_Y+(Ax,Pz)_Y-(B^*Px,B^*Pz)_U+(Rx,Rz)_Z=0\,,  \quad x, z\in \cD(A)\,,
\end{equation}
is that $R$ -- that is the {\em observation} operator in the optimization problem -- does not need to be smoothing; see \eqref{e:keyasR}, that is iiib) of the Assumptions~\ref{a:ipo_1}.

Working in the framework described above, a theory for both the finite and infinite time horizon
LQ-problem has been devised in \cite{abl_2005} and \cite{abl_2013}, the latter under the Assumptions~\ref{a:ipo_2} (replacing Assumptions~\ref{a:ipo_1}).
The strenght of these theories is confirmed by the trace regularity results that have been established for the solutions to significant PDE systems comprising hyperbolic and parabolic components, over the years. 
Indeed, the novel class introduced in \cite{abl_2005} has proven successful in describing a diverse range of physical interactions such as mechanical-thermal, acoustic-structure, fluid-elasticity ones.
And above all, successful in order to attain solvability of the associated optimization problems;
see \cite{abl-thermo_2005}, \cite{bucci-applicationes}, \cite{bucci-las-fsi_2010,bucci-las-fsi_2011},
\cite{abl_2013}, and the recent \cite{bucci_2020}.

We recall that since in \cite{abl_2005} and \cite{abl_2013} we followed a variational approach, 
by using the optimality conditions a bounded operator -- to wit, $P(t)$ or $P$, in accordance with 
either a finite or infinite time interval -- is constructed in terms of the optimal state and only subsequently shown to satisfy the corresponding Riccati equations.
For this reason the works \cite{abl_2005} and \cite{abl_2013} provide existence for Riccati equations, but not uniqueness.

By contrast and as it is well-known, in a (sometimes called) `direct' approach, 
the well-posedness of the nonlinear Riccati equation is studied in a first step, 
independently from the minimization problem.
For the Cauchy problems associated with the differential Riccati equations, 
uniqueness is established along with existence, 
as joint outcomes of a method of proof based on fixed point theorems. 
%
Then, in order to achieve the actual feedback representation of the optimal control, that is the ultimate goal from a mathematical as well as a practical perspective, further steps are necessary.
(This is not the case here.) 

Thus, our aim with the present work is to provide {\em uniqueness} results for both the differential and algebraic Riccati equations, thereby completing the complex of findings of \cite{abl_2005} and \cite{abl_2013}, respectively.
These results -- our main results -- are stated as Theorems~\ref{t:dre-uniqueness} and \ref{t:are-uniqueness}.
Uniqueness holds in appropriate (respective) classes of linear bounded operators, which are consistent
with the distinctive property of the {\em gain operator} emerged in \cite{abl_2005,abl_2013};
see the statements S4. and A4. in Theorem~\ref{t:theory-tfinite} and Theorem~\ref{t:theory-tinfinite}, respectively.

We feel it is important to emphasize that although often following approaches and arguments previously used in the past literature, our proofs of uniqueness need to face novel challenges.
This is due to the specific assumptions on the operator coefficients of the Riccati equations,
notably weaker than the ones which characterize the parabolic class (and its generalization),
somewhat a little bit stronger than mere admissibility which is typical of the hyperbolic class,
yet not requiring smoothness of the observations.

We find it useful to devote a separate section to several comments and points concerning our analysis.
%
%
\subsection{An insight into the mathematical proofs} \label{ss:insight}
We provide two proofs of Theorem~\ref{t:dre-uniqueness}, that pertains to uniqueness for the 
differential Riccati equations \eqref{e:DRE} (DRE, in short).
This result is relevant for the optimal control problem on a finite time horizon (i.e. Problem~\ref{p:problem-0} with $T<+\infty$), under the Assumptions~\ref{a:ipo_1}.

The first proof, given in Section~\ref{s:uniqueness_1}, follows the method employed
by Lasiecka and Triggiani in \cite[Vol.~I, Theorem~1.5.3.3; Vol.~II, Theorems~8.3.7.1]{las-trig-redbooks}, up to a certain point.
The basic rationale is standard: one proceeds by contradiction, assuming there exists another solution 
$P_1(t)$ to the DRE, besides the optimal cost operator $P(t)$.
(In our case, that $P(t)$ solves the DRE has been proved in \cite{abl_2005}; see the statement S6. of 
Theorem~\ref{t:theory-tfinite}.)
On the basis of the integral form of the DRE -- in the present case two forms, derived in Lemma~\ref{l:DtoI} --, one finds that the difference $Q(t)=P_1(t)-P(t)$ solves a suitable integral equation.
It is in the estimates performed afterwards, that the paths diverge, with iiic) of the
Assumptions~\ref{a:ipo_1} playing a major role here, together with the class of operators 
$P(t)$ and $P_1(t)$ belong to.
The proofs carried out in the aforementioned results instead take advantage of either the enhanced regularity of the analytic semigroup $e^{At}$ that describes the free dynamics, or the additional regularity assumed on
the operators $Re^{At}$ and $Re^{At}B$. 

It is unlikely that the method of proof described above could be adjusted in order to establish uniqueness
for the algebraic Riccati equation, relevant for Problem~\ref{p:problem-0} with $T=+\infty$.
This owing to the argument employed when $T< +\infty$: $Q(t)$ is shown to be zero on some subinterval
of $[0,T]$, with the soughtafter goal attained in a finite number of steps.
Other methods of proof are certainly worth to be explored.
One might attempt to proceed along the lines of the proof of \cite[Vol.~I, Theorem~2.4.5]{las-trig-redbooks}, despite the absence of analyticity of the semigroup $e^{At}$, as well as of the optimal state semigroup 
$\Phi(t)$.
If this were the case, a preliminary analysis which appears unavoidable would pertain to issues
connected to a given solution $P_1$ to the ARE (a priori, distinct from the optimal cost operator $P$),
such as the possible generation of a $C_0$-semigroup on $Y$ by the operator $A-BB^*P_1$, 
in turn to be suitably defined. 
It is unclear in this respect whether a {\em full} description of the domain of the generator itself $A_P$
of $\Phi(t)$, would be required.
Improving the statement A5. of Theorem~\ref{t:theory-tinfinite} is an independent interesting subtlety;
in any case, we leave this question open.

\smallskip 
Thus, in order to prove Theorem~\ref{t:are-uniqueness} we choose to return to 
the dynamic programming approach to the LQ-problem, and borrow from it a key element in attaining
that the optimal control admits a (pointwise in time) feedback representation.
This element is fulfilled by the so called {\em fundamental identity}.
In a direct approach, the fundamental identity builds a bridge between the nonlinear Riccati equation -- whose well-posedness is studied in a first step, independently from the minimization problem, as recalled above -- and the actual closed loop form of the optimal control.
The latter goal (i.e. the feedback representation of the optimal control) was already attained in \cite{abl_2005} and \cite{abl_2013}; see the statements 
S4. of in Theorem~\ref{t:theory-tfinite} and A4. of Theorem~\ref{t:theory-tinfinite}, respectively.
And yet, the identities we establish in Lemma~\ref{l:id-fondamentale} and Lemma~\ref{l:id_fond_infty}
constitute a major (and technically nontrivial) step in our analysis, allowing to achieve uniqueness for both differential and algebraic Riccati equations, respectively.
Theorem~\ref{t:dre-uniqueness} and Theorem~\ref{t:are-uniqueness} are thus established,
via methods of proof which are akin to each other.
%
%
\subsection{Riccati equations: a historical synopsis, and a few open questions}
Historically, the appearance of {\em matrix} Riccati equations -- named after Jacopo Riccati (XVII century) -- as a research subject is recognized to date back to the sixties, with the independent contributions of Kalman \cite{kalman_1960} in the USA and of Letov \cite{letov_1961} in the former Soviet Union. 
Their study has already lasted for more than half a century, owing to its connections to a wide variety of topics such as stability and stabilization problems, the linear-quadratic (LQ) optimal control problem, differential games, just to name a few.
We refer the reader to the introductory monograph \cite{zabczyk-book}, along with its references. 

We begin by briefly mentioning a broader problem,
and then we focus on well-posedness of Riccati equations with unbounded operator coefficients.
We make an attempt to retrace the main stages of its evolution, which spans over decades, thereby
arriving at the problem at hand.

\subsubsection{The non-standard LQR-problem, matrix/operator inequalities}
A related significant problem which sees its development around the same years is the one referred
to in the literature as the {\em non-standard} (linear) quadratic regulator problem.
Associated with the Cauchy problems for the linear control system $y'=Ay+Bu$ is a quadratic
functional, which may not be coercive; 
it is indeed the integral of a general quadratic form 
\begin{equation*}
F(y,u)= \langle Qy,y \rangle + 2 \text{Re} \langle S u,y \rangle + \langle Ru,u \rangle\,,
\end{equation*}
where for instance $Q$ is allowed to be negative definite.
The goal, in short, is then to characterize -- given an arbitrary initial state $y_0$ -- the condition
\begin{equation*}
\inf_{u\in L^2(0,T;U)}\int_0^T F(y(t),u(t))\,dt > -\infty\,,
\end{equation*}
where $y(\cdot)$ is the solution to the Cauchy problem corresponding to $y_0$ and to the control function
$u(\cdot)$.
As it is well known, one is led to solving matrix/operator {\em inequalities}, rather than 
(matrix/operator) Riccati {\em equations};
in addition, when $T=+\infty$ the analysis moves from the {\em time domain} to the {\em frequency domain}.

It is far beyond the scope of this article to go into any detail of this problem, whose key result is
the celebrated Kalman-Popov-Yakubovich Lemma.
However, we find it fitting to include in this synopsis a mention of the above described scenario, owing to the variety of its motivations and applications -- some of them dating back to much earlier times than the sixties --, as well as of the extensions of Yakubovich's results to the PDE realm that have been attained in the nineties. 
We refer the interested reader to \cite{pan-KPY_1997} and \cite{bu-pan-scl_2000}, which include historical comments and pertinent references.

\subsubsection{Operator Riccati equations}
During the seventies, the theories of the LQ-problem and of Riccati equations is extended to the infinite dimensional setting.
The abstract formulation of initial-boundary value problems (IBVP) for PDE in bounded domains, in the presence of {\em distributed} control, 
brings about control systems of the usual form $y'=Ay+Bu$, where 
$A$ is the infinitesimal generator of a $C_0$-semigroup in a certain Banach space $Y$, while 
$B$ is a {\em bounded} operator from a (control) space $U$ to $Y$; see \cite{bddm}.
(The same framework encompasses also other kinds of differential problems such as, e.g., delay systems.)
The first contributions to the study of Riccati equations with unbounded operator coefficients
(still, with $B$ bounded) are due to the works of Da Prato \cite{daprato_1973}, Tartar, 
Curtain and Pritchard, Zabczyk; see \cite[Part~IV, Ch.~4]{zabczyk-book} and \cite{bddm}.

\subsubsection{Riccati equations with unbounded $B$} 
The proof of well-posedness performed by Da Prato in \cite{daprato_1973} paves the way for subsequent direct studies of Riccati equations with an {\em unbounded} (control) operator $B$.
These extensions were initially focused on mixed problems for {\em parabolic} PDE.
Hence, they could exploit the regularity properties brought about by the analytic semigroup that describes the free dynamics.
Although well known to those acquainted with the subject, it is important to emphasize
that the transition to infinite-dimensional systems describing PDE
with {\em boundary} or {\em point} control (as opposed to PDE with {\em distributed} control), 
has drastically impacted on the mathematical analysis of the corresponding Riccati equations.  
Whatever approach and method of proof is chosen, the actual meaning of the term $B^*P$
which appears in the quadratic term of the equation -- or lack thereof --
is the central issue that must be tackled.  
It is here that the parabolic and the hyperbolic analyses split apart.

Even more notably, when a Riccati theory
in connection with optimal boundary control of hyperbolic-like PDE is ultimately attained, 
greater mathematical challenges come from the trace regularity results that are essential 
in order to invoke it, with respective proofs on an {\em ad hoc} basis.

We refer the reader to \cite{las-trig-lncis} for a useful overview of the main results on the subject
until the early nineties, and to \cite{bddm} for a broader picture.
The in-depth monograph \cite{las-trig-redbooks} goes into much more detail; 
the notes at the end of any chapter provide a roadmap as well as bibliographical information.
Principal contributions to the Riccati theory for parabolic (and parabolic-like) PDEs are
due to Balakrishnan (1977), Lasiecka and Triggiani (1983),
Pritchard and Salamon (1984), Flandoli (1984), Da Prato and Ichikawa (1985).
The unifying functional-analytic approach such as the one of Salamon \cite{salamon_1987} appears upweighed by
the Riccati theory developed specifically for hyperbolic (hyperbolic-like, including Petrovski\u{\i}-type) PDE.
We recall explicitly the direct study of differential Riccati equations by Da Prato, Lasiecka and Triggiani
\cite{dap-las-trig_1986}; the most conclusive results on algebraic Riccati equations 
are due to Flandoli (1988) and Barbu (2000), both joint with Lasiecka and Triggiani.

\subsubsection{A glimpse of peculiarities and challenges relevant to hyperbolic-like PDE}
The hyperbolic character of the dynamics -- in the present work, of one component of the coupled PDE system --, combined with the unboundedness of the control operator, 
can raise obstacles to the well-posedness of the Riccati equations 
corresponding to the associated optimal control problems.
It will suffice to highlight that under the {\em abstract trace regularity} assumption or 
{\em admissibility condition}
\begin{equation*}
\exists C_T>0 \colon \quad \int_0^T \|B^*e^{A^*t}x\|_U^2\le C_T \|x\|_Y^2 \qquad \forall x\in Y
\end{equation*}
which is characteristic of hyperbolic-like dynamics, 
\begin{itemize}
\item
well-posedness of differential Riccati equations holds true provided the observation operator
$R$ has appropriate {\em smoothing properties};

\item
given the optimal cost operator $P$ for the infinite time horizon problem, it turns out that the gain operator $B^*P$ that occurrs in the algebraic Riccati equation may be even {\em not densely defined}.
(Then, appropriate extensions of $B^*P$ are called for.)
\end{itemize}
A simple illustration of the latter weak point is given by Weiss and Zwart \cite{w-z_1988}.
This work exhibits a first-order hyperbolic PDE (in one space dimension), with {\em point} control;
given a certain quadratic functional, the optimal cost operator $P$ is computed explicitly, 
and then it is shown that $B^*$ is intrinsically not defined on $Py$, $y\in \cD(A)$.
In this connection we remark that, to the best of our knowledge, the question as to whether there 
actually exist examples of hyperbolic PDE with boundary (rather than point) control which give rise to the same `failure', is open.

We like to point out that in the case of systems of hyperbolic-parabolic PDEs (with boundary control) which fulfil the requirements of our Assumptions~\ref{a:ipo_1} (or Assumptions~\ref{a:ipo_2}, when time
varies in the positive half line), the highlighted difficulties are overcome, as the theory of the Riccati 
equations devised in \cite{abl_2005,abl_2013} and in the present work shows.

\smallskip
The (full) Bolza problem -- that is a finite time horizon problem where the quadratic functional to be minimized includes a penalization of the state at the final time $T$ -- 
renders the study of well-posedness of Riccati equations more intricate.
A technical discussion of this subject is beyond the scope of this synopsis. 
We refer the reader to \cite{las-trig-redbooks} and its references.

We just remark that the case of functionals of Bolza type is not addressed in \cite{abl_2005}; 
the Bolza problem for the class of systems characterized by the Assumptions~\ref{a:ipo_1}
is indeed open.

\subsubsection{The non-autonomous case}
The case of time dependent (operator) coefficients in the Riccati equation is considerably more difficult.
The major contributions, that all pertain to the parabolic case, are due to Acquistapace and Terreni,
in part jointly with Flandoli; see \cite{acq-fla-ter_1991}, \cite{acq-ter_2000}.

\subsubsection{Approaching the present scenario}
We turn now our attention to PDE systems comprising two (or more) evolutionary PDE of different type, which further display a {\em strong coupling} and are subject to boundary/interface/point control actions.
%
%
We owe to Avalos and Lasiecka the understanding of the role played by certain (local in time) estimates for the kernel $e^{At}B$ for the purpose of boundedness of the gain operator $B^*P(t)$, even in the absence of
analyticity of the semigroup $e^{At}$.
It was the theoretical study of the quadratic optimal control problem for a PDE model of an acoustic-structure interaction, successfully carried out in \cite{avalos-las_1996}, to 
motivate the subsequent introduction of the class of control systems which yield 
the said {\em singular estimates}.
The theory relative to the corresponding (differential as well as algebraic) Riccati equations is well recognized:
after the former works \cite{las-cbms}, \cite{las-trento}, \cite{las-trig-se-1,las-trig-se-2},
a great deal of attention has been devoted to the Bolza problem by Lasiecka and Tuffaha
(see \cite{las-tuff-theory_2008}, \cite{las-tuff-theory_2009}, \cite{tuffaha-aa_2013}).
It should be pointed out that the question of uniqueness of Riccati equations appear to be
left out of the aforementioned works, with the exception of \cite{tuffaha-aa_2013}.

\smallskip
The class of control systems at hand here -- introduced by these authors, jointly with Lasiecka, in \cite{abl_2005} -- is instead characterized by more elaborate regularity properties of the adjoint operator $B^*e^{A^*t}$, recorded in the next section as Assumptions~\ref{a:ipo_1} (these assume the slightly strenghtened form of Assumptions~\ref{a:ipo_2} for the problem on the infinite time interval). 
These assumptions have proved effective in describing significant systems of strongly coupled hyperbolic/parabolic PDE which do not fall in the previous class, as has been pointed out already.
Within this framework, complete theories of the quadratic optimal control problem on a finite and infinite time horizon have been devised in \cite{abl_2005} and \cite{abl_2013}, which bring about existence for the corresponding differential and algebraic Riccati equations, 
with the distinctive feature that $B^*P$ is defined on a {\em dense} set.
An insightful comparison among all the mentioned abstract classes, along with their
respective Riccati theories, is provided by \cite[Section~2.1]{abl_2005}.

Thus, focus of the present work is on uniqueness of Riccati equations, an issue which was not discussed and is absent in the aforesaid works \cite{abl_2005} and \cite{abl_2013}. 
While benefiting from the already established existence, our proofs of uniqueness tackle (and overcome)
the challenges brought about by the peculiar regularity properties of the composite dynamics, 
singled out by the Assumptions~\ref{a:ipo_1} and \ref{a:ipo_2}.

\subsection{Outline of the paper}
The structure of the paper is outlined readily.
In the next Subsection~\ref{s:framework} we give the statements of our uniqueness results, 
that are Theorem~\ref{t:dre-uniqueness} and Theorem~\ref{t:are-uniqueness}, after
having recalled the framework and the core statements of the theories of the LQ-problem devised in \cite{abl_2005} and \cite{abl_2013}. 

In Section~\ref{s:uniqueness_1} we present a first proof of Theorem~\ref{t:dre-uniqueness}.
This is preceded by Lemma~\ref{l:DtoI}, which establishes two integral forms of the
differential Riccati equation.
An integral form of the algebraic Riccati equation is derived as well, in Lemma~\ref{l:are-integral}.

Section~\ref{s:uniqueness_2} provides a second proof of Theorem~\ref{t:dre-uniqueness} and the
proof of Theorem~\ref{t:are-uniqueness}.
Instrumental to the proofs are Lemma~\ref{l:id-fondamentale} and Lemma~\ref{l:closedloop} for the former result, and Lemma~\ref{l:id_fond_infty} along with Lemma~\ref{l:closedloopinfty} for the latter.
These lemmas establish the fundamental identities and discuss certain built closed loop equations.

In Appendix~\ref{a:appendix} we gather several regularity results (some old, some new)
which are used throughout the paper.


\section{Abstract framework, main results} \label{s:framework}
\subsection{The LQ problem: abstract dynamics and setting}
Let $Y$ and $U$ be two separable Hilbert spaces, the {\em state} and {\em control} space, respectively.
We consider the abstract (linear) control system $y'=Ay+Bu$ and the corresponding Cauchy problem

\begin{equation}\label{e:state-eq}
\begin{cases}
y'(t)=Ay(t)+Bu(t)\,, & \quad 0\le t<T
\\[2mm]
y(0)=y_0\in Y\,, & 
\end{cases}
\end{equation}
under the following basic Assumptions.
\begin{assumptions}[\bf Basic Assumptions] \label{a:ipo_0} 
Let $Y$, $U$ be separable complex Hilbert spaces. 
\begin{itemize}
\item
The closed linear operator $A:\cD(A)\subset Y \to Y$ is the infinitesimal generator of a strongly continuous semigroup $\{e^{At}\}_{t\ge 0}$ on $Y$;
\item 
$B\in \cL(U,[\cD(A^*)]')$.
\end{itemize}

\end{assumptions} 

\begin{remarks}
\begin{rm}
The apparent weakness of the basic assumptions on the operators $A$ and $B$ which characterize the 
initial/boundary value problems described by the abstract equation in \eqref{e:state-eq} 
is a reflection of the modeling of boundary control problems for systems of coupled hyperbolic/parabolic PDEs. 
It is worth recalling its most prominent features:
(i) first, the control operator $B$ will {\em not} be bounded from the control space $U$ into the state space $Y$; (ii) secondly, the semigroup $e^{At}$ will {\em not} be analytic.
\\
We remind the reader that (i) is intrinsic to the mathematical modeling of control actions on the boundary of the domain (or on some part of it), as first shown in \cite{fattorini_1968}.
We also note that the presence of control actions concentrated on points (in $1$-D) or on curves (in $2$-D) in the interior of domain results in unboundedness of the control operator as well;
illustrations of both situations are found in \cite{las-trig-lncis}, \cite{bddm} and \cite{las-trig-redbooks}.
As for the simple requirement in (ii), it is a natural feature of composite dynamics comprising solely a parabolic component: analiticity of the overall semigroup should not be expected.
\end{rm}
\end{remarks}

\smallskip
Thus, given $y_0\in Y$, the Cauchy problem \eqref{e:state-eq} possesses a unique {\em mild} solution given by 
\begin{equation} \label{e:mild} 
y(t)= e^{At}y_0+\int_0^t e^{A(t-s)}Bu(s)\,ds\,, \qquad t\in [0,T)\,,
\end{equation}
where
\begin{equation*}
L\colon u(\cdot) \longrightarrow (Lu)(t) :=\int_0^t e^{A(t-s)}Bu(s)\,ds
\end{equation*}
is the {\em input-to-state mapping}, that is the operator which associates to any control function 
$u(\cdot)$ the solution to the Cauchy problem \eqref{e:state-eq} with $y_0=0$, and \eqref{e:mild} 
makes sense at least in the extrapolation space $[\cD(A^*)]'$; 
see \cite[\S~0.3, p.~6, and Remark~7.1.2, p.~646]{las-trig-redbooks}.
\\
We will use the notation $L$ throughout the paper. 
We point out here the definition \eqref{e:convolution-in-st} of the operator $L_s$, which will occur 
later; the symbol $L_0$, in place of $L$, is avoided for the sake of simplicity.

\smallskip
To the state equation \eqref{e:state-eq} we associate the quadratic functional
\begin{equation} \label{e:cost}
J(u)=\int_0^T \left(\|Ry(t)\|_Z^2 + \|u(t)\|_U^2\right)dt\,, 
\end{equation}
where $Z$ is a third separable Hilbert space -- the so called observation space (possibly,
$Z\equiv Y$) -- and at the outset the {\em observation} operator $R$ simply satisfies 
\begin{equation}\label{e:basic-for-r}
R\in \cL(Y,Z)\,.
\end{equation}
The formulation of the optimal control problem under study is classical. 
The adjectives finite or infinite time horizon problem refer to the cases $T<+\infty$ or $T=+\infty$,
respectively. 
\begin{problem}[\bf The optimal control problem] \label{p:problem-0}
Given $y_0\in Y$, seek a control function $u\in L^2(0,T;U)$ which minimizes the
cost functional \eqref{e:cost}, where $y(\cdot)=y(\cdot\,;y_0,u)$ is the solution to
\eqref{e:state-eq} corresponding to the control function $u(\cdot)$ (and with initial state $y_0$)
given by \eqref{e:mild}.
\end{problem}

It is well known that aiming at solving Problem~\ref{p:problem-0}, certain
principal facts need to be ascertained, beside the existence of a unique optimal pair 
$(\hat{u}(\cdot,s;y_0),\hat{y}(\cdot,s;y_0))$ (which is readily established by using classical variational arguments); namely, 
\begin{itemize}
\item[-]
that the optimal control $\hat{u}(t)$ admits a (pointwise in time) {\em feedback} representation,
in terms of the optimal state $\hat{y}(t)$;
\item[-]
that the optimal cost operator $P(t)$ ($P$, when $T=+\infty$) solves the corresponding 
Differential (Algebraic) Riccati equation; thus, the issue of well-posedness of the DRE (ARE)
arises, requiring  
\item[-]
that a meaning is given to the gain operator $B^*P(t)$ ($B^*P$) on the state space $Y$ (by means of extensions, or -- and this will be the case here --, as a {\em bounded} operator
on a dense subset of $Y$).
\end{itemize}


\subsection{Theoretical results: finite and infinite time horizon problems}
We begin by recalling the theory of the LQ-problem on a finite time interval developed in 
\cite{abl_2005}. 
This theory pertains to the class of control systems -- introduced in the very same
\cite{abl_2005} -- whose dynamics, control and observation operators are subject to the following assumptions.

\begin{assumptions}[\bf Finite time horizon case] \label{a:ipo_1} 
Let $Y$, $U$ and $Z$ be separable complex Hilbert spaces, and let $T>0$ be given.
The pair $(A,B)$ (which describes the state equation \eqref{e:state-eq}) fulfils 
Assumptions~\ref{a:ipo_0}, with the additional property $A^{-1}\in \cL(Y)$, while the 
observation operator $R$ (which occurs in the cost functional \eqref{e:cost}) satisfies the 
basic condition~\eqref{e:basic-for-r}.

The operator $B^*e^{A^*t}$ can be decomposed as 
\begin{equation} \label{e:key-hypo} 
B^*e^{A^*t}x = F(t)x + G(t)x\,, \qquad 0\le t\le T\,, \; x\in \cD(A^*)\,,
\end{equation}
where $F(t)\colon Y\longrightarrow U$ and $G(t)\colon \cD(A^*)\longrightarrow U$, $t>0$, are bounded linear
operators satisfying the following assumptions:

\begin{enumerate}
\item[i)] 
there exist constants $\gamma\in (0,1)$ and $N>0$ such that
\begin{equation}  \label{e:keyasF}
\|F(t)\|_{\cL(Y,U)} \le N\,t^{-\gamma}\,, \qquad 0<t\le T\,;
\end{equation}
\item[ii)] 
the operator $G(\cdot)$ belongs to $\cL(Y,L^p(0,T;U))$ for all $p\in [1,\infty)$;
\item[iii)] 
there exists $\epsilon>0$ such that:
\begin{enumerate}
\item[a)] 
the operator $G(\cdot){A^*}^{-\epsilon}$ belongs to $\cL(Y,C([0,T];U))$, 
with
\begin{equation*} 
\sup_{t\in [0,T]}\|G(t){A^*}^{-\epsilon}\|_{\cL(Y,U)} <\infty\,;
\end{equation*}
\item[b)] 
the operator $R^*R$ belongs to $\cL(\cD(A^{\epsilon}),\cD({A^*}^{\epsilon}))$, i.e.
\begin{equation} \label{e:keyasR} 
\|{A^*}^{\epsilon}R^*RA^{-\epsilon}\|_{\cL(Y)} \le c<\infty\,;
\end{equation}
\item[c)] 
there exists $q\in (1,2)$ (depending, in general, on $\epsilon$) such that the map 
$x \longmapsto B^*e^{A^*t}{A^*}^\epsilon x$
has an extension which belongs to $\cL(Y,L^q(0,T;U))$. 
\end{enumerate}
\end{enumerate}

\end{assumptions}

\medskip

\begin{remarks}
\begin{rm}
1. We note that it is assumed at the very outset that $0\in \rho(A)$, i.e. the dynamics operator $A$ is boundedly invertible on $Y$.
It is important to emphasize that this property happens to hold true for an ample variety of
composite systems of hyperbolic-parabolic PDE, such as e.g. thermoelastic systems, structural-acoustics models, fluid-elasticity interactions; see \cite{abl-thermo_2005}, 
\cite{bucci-applicationes}, \cite{bucci-las-fsi_2010,bucci-las-fsi_2011}, \cite{bucci_2020}. 
This allows in particular to define the fractional powers $(-A)^{\alpha}$, $\alpha\in (0,1)$;
see \cite[\S~1.15.1-2]{triebel}, \cite{martinez-etal_1988}, \cite[\S~2.2.2]{lunardi-book}.
(In order to make the notation lighter, we wrote $A^\alpha$ instead of $(-A)^\alpha$; 
the same will happen throughout the paper.)

On the other hand, when $\lambda =0$ is not in the resolvent set of $A$, one can find 
$\omega_0>0$ -- the {\em type} of the semigroup -- such that the translation 
$\hat{A}:=\omega -A$ is a positive operator for any $\omega>\omega_0$; then $\hat{A}$
is boundedly invertible,
and the fractional powers $\hat{A}^\theta$ of $\hat{A}$ are well-defined. 
The extension of the present theory to the case of unstable semigroups $e^{At}$ is particularly relevant in the infinite time horizon case ($T=+\infty$). 
It would certainly require a tedious series of technical changes and {\em is so far lacking}. 

\smallskip
\noindent
2. If the singular estimate \eqref{e:keyasF} for the component $F$ ({\em cf.}~i) of Assumptions~\ref{a:ipo_1}) 
is shown to hold true in an arbitrarily small right neighbourhood of $t=0$, then
it usually extends to all $t\in (0,T]$, by using semigroup theory.  

\smallskip
\noindent
3. We note that iiia) of the Assumptions~\ref{a:ipo_1} tells us that the `basic' (time) regularity of the $G$ component, that is $G(\cdot)y\in L^p(0,T;U)$ for $y\in Y$ and all {\em finite} summability exponents
$p\ge 1$, improves to $G(\cdot)y\in C([0,T];U)$, when $y\in \cD({A^*}^\epsilon)$. 

\smallskip
\noindent
4. The findings of the work \cite{abl_2005}, summarized in the next Theorem~\ref{t:theory-tfinite}, were actually established under the {\em weaker} regularity assumption

\begin{enumerate}

\item[iiic)'] 
there exists $q\in (1,2)$ such that the map $x \longmapsto B^*e^{A^*t }R^*R {A}^\epsilon x$
has an extension which belongs to $\cL(Y,L^q(0,T;U))$.
\end{enumerate}
Indeed, iiic)' of Assumptions~\ref{a:ipo_1}, combined with iiib), implies readily iiic), as already pointed out in \cite[p.~1401]{abl_2005} (with a reversed notation, though).

However, on one side the present iiic) -- more precisely, the boundary regularity result that (case by case) the control-theoretic condition iiic) translates to -- has been shown over the years to hold true in the case of distinct PDE systems studied in the aforementioned references (\cite{abl-thermo_2005}, 
\cite{bucci-applicationes}, \cite{bucci-las-fsi_2010,bucci-las-fsi_2011}, \cite{bucci_2020}).
On the other side, uniqueness of solutions to the Riccati equations appears to be in need of it:
both within the first proof of Theorem~\ref{t:theory-tfinite} given in the next section
(specifically to perform the estimates which bring about \eqref{e:key-estimate}),
and also to show Lemma~\ref{l:closedloop}, instrumental to the distinct proof of the same result
proposed in Section~\ref{s:uniqueness_2}.
Furthermore, the stronger \eqref{e:da-iiic} -- which is central to the proof of 
Lemma~\ref{l:closedloopinfty} relevant to the infinite time horizon case --
is based upon iiic) of Assumptions~\ref{a:ipo_1}. 
\end{rm}
\end{remarks}

\smallskip
Under the listed Assumptions~\ref{a:ipo_1}, a full solution to the optimal control 
Problem~\ref{p:problem-0}, as detailed by the complex of statements S1.--S6. 
collected in Theorem~\ref{t:theory-tfinite} below, was obtained in \cite{abl_2005}.
These include, in particular, two specific novel features over the {\em parabolic} 
or {\em hyperbolic} cases (\cite{las-trig-lncis}):

\begin{itemize}
\item
the lack of continuity (in time) of the optimal control $\hat{u}(\cdot)$ (see S1.), and 
\item
that the gain operator $B^*P(t)$ is {\em bounded} only on a certain dense subset of $Y$, 
yet not preventing well-posedness of the Differential Riccati Equations corresponding
to the LQ problem. 
\end{itemize}


\begin{theorem}[Finite time horizon theory; cf.~\cite{abl_2005}, Theorem~2.3]
\label{t:theory-tfinite}
With reference to the control problem \eqref{e:state-eq}--\eqref{e:cost}, under
the Assumptions~\ref{a:ipo_1}, the following statements are valid for each $s\in [0,T)$.
\begin{enumerate}
\item[\bf S1.] 
For each $x\in Y$ the optimal pair $(\hat{u}(\cdot,s;x),\hat{y}(\cdot,s;x))$ satisfies 
\begin{equation*}
\hat{y}(\cdot,s;x)\in C([s,T];Y), \quad \hat{u}(\cdot,s;x)\in \bigcap_{1\le p<\infty}L^p(s,T;U).
\end{equation*}
\item[\bf S2.] 
The linear bounded (on $Y$) operator $\Phi(t,s)$, defined by 
\begin{equation} \label{statop} 
\Phi(t,s)x = \hat{y}(t,s;x) = e^{A(t-s)}x + [L_s \hat{u}(\cdot,s;x)](t)\,,
\quad s\le t\le T\,, \; x\in Y\,,
\end{equation}
is an evolution operator, i.e.
\begin{equation*}
\Phi(t,t)=I_Y\,, \qquad 
\Phi(t,s)=\Phi(t,\sigma)\Phi(\sigma,s) \quad \textrm{for $s\le \sigma\le t\le T$.}
\end{equation*}
\item[\bf S3.] 
For each $t\in [0,T]$ the operator $P(t)\in \cL(Y)$, defined by 
\begin{equation} \label{opRic}
P(t)x = \int_t^T e^{A^*(\tau-t)} R^*R \Phi(\tau,t)x\ d\tau\,, 
\qquad x\in Y,
\end{equation}
is self-adjoint and positive; it belongs to $\cL(Y,C([0,T];Y))$ and is such 
that
\begin{equation*}
(P(s)x,x)_Y = J_s(\hat{u}(\cdot,s;x),\hat{y}(\cdot,s;x)) \qquad \forall 
s\in [0,T]\,.
\end{equation*}
\item[\bf S4.] 
The gain operator $B^*P(\cdot)$ belongs to $\cL(\cD(A^{\varepsilon}),C([0,T];U))$ and 
the optimal pair satisfies for $s \le t \le T$ 
\begin{equation} \label{feedb}
\hat{u}(t,s;x) = -B^*P(t)\hat{y}(t,s;x) \qquad \forall x\in Y.
\end{equation}
\item[\bf S5.] 
The operator $\Phi(t,s)$ defined in \eqref{statop} satisfies for $s<t\le T$:
\begin{equation} \label{dfids} 
\frac{\partial\Phi}{\partial s}(t,s)x = -\Phi(t,s)(A-BB^*P(s))x \in
L^{1/\gamma}(s,T;[\cD({A^*}^{\varepsilon})]')
\end{equation}
for all $x\in \cD(A)$, and
\begin{equation}  \label{dfidt} 
\frac{\partial\Phi}{\partial t}(t,s)x = (A-BB^*P(t))\Phi(t,s)x \in C([s,T],[\cD(A^*)]') 
\end{equation}
for all $x\in \cD(A^{\varepsilon})$.
\item[\bf S6.] 
The operator $P(t)$ defined by \eqref{opRic} satisfies the following (differential) Riccati equation in
$[0,T)$:

\begin{equation} \label{e:DRE}
\begin{cases}
\frac{d}{dt}\,(P(t)x,y)_Y + (P(t)x,Ay)_Y + (Ax,P(t)y)_Y + (Rx,Ry)_Z  
\\[1mm] 
\myspace\quad - (B^*P(t)x,B^*P(t)y)_U = 0 \qquad\qquad\forall\, x,y\in \cD(A)
\\[2mm]
P(T)=0\,. 
\end{cases}
\end{equation}

\end{enumerate}
\end{theorem}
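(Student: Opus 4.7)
The plan is to follow the standard variational route to the LQ problem, but to work through the six statements S1--S6 in an order that lets the delicate regularity of the gain operator $B^*P(t)$ emerge gradually, exploiting the decomposition $B^*e^{A^*t}=F(t)+G(t)$ at each stage.

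First I would fix $s\in[0,T)$ and $x\in Y$ and observe that the functional $J_s$ is coercive, strictly convex and lower semicontinuous on $L^2(s,T;U)$, which yields a unique minimizer $\hat u(\cdot,s;x)$ and the associated $\hat y(\cdot,s;x)\in C([s,T];Y)$. The Euler--Lagrange equation takes the form $\hat u = -L_s^*R^*R\hat y$, where the adjoint operator $L_s^*$ is computed from \eqref{e:mild}. Using \eqref{e:key-hypo} together with i)--ii) and iiic) of Assumptions~\ref{a:ipo_1} (the latter being what allows the composition with $R^*R$ to gain the required $L^q$-integrability), the right-hand side can be shown to lie in every $L^p(s,T;U)$, $p<\infty$, giving S1. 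Linearity in $x$ of the minimizer produces the evolution operator $\Phi(t,s)$ of S2, the semigroup identity following from Bellman's principle of optimality applied to the quadratic functional.

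For S3 I would take \eqref{opRic} as the definition of $P(t)$ and use the regularity of $\Phi(\cdot,t)x$ in $C([t,T];Y)$ together with $R\in\cL(Y,Z)$ to check that $P(t)\in\cL(Y,C([0,T];Y))$; self-adjointness and positivity come from symmetrizing the integral identity $(P(s)x,x)_Y=J_s(\hat u,\hat y)$. For S4 -- which is the heart of the matter -- I would first verify the representation $P(t)x=\int_t^T e^{A^*(\tau-t)}R^*R\Phi(\tau,t)x\,d\tau$, apply $B^*$ formally and split via \eqref{e:key-hypo}:
\begin{equation*}
B^*P(t)x=\int_t^T\bigl[F(\tau-t)+G(\tau-t)\bigr]R^*R\Phi(\tau,t)x\,d\tau.
\end{equation*}
The $F$-term is controlled pointwise by \eqref{e:keyasF} because $\gamma<1$; the $G$-term requires pairing iiia) with \eqref{e:keyasR}, writing $G(\tau-t)R^*R\Phi(\tau,t)x = [G(\tau-t){A^*}^{-\epsilon}]\cdot[{A^*}^\epsilon R^*R A^{-\epsilon}]\cdot A^\epsilon\Phi(\tau,t)x$, which demands $x\in\cD(A^\epsilon)$ and joint continuity in $t$; this is where iiia)+iiib) become indispensable. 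The feedback formula \eqref{feedb} then follows by inserting the Euler--Lagrange characterization of $\hat u$ into $-B^*P(t)\hat y(t)$ and recognizing the same convolution structure.

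For S5 I would differentiate \eqref{statop} using the semigroup identity for $e^{At}$ and the feedback formula, being careful that the derivative only makes sense in $[\cD({A^*}^\epsilon)]'$ (because of the singularity $t^{-\gamma}$ from $F$), which explains the $L^{1/\gamma}$ integrability on the right-hand side of \eqref{dfids}. Finally, for S6, I would differentiate $(P(t)x,y)_Y$ via \eqref{opRic}, moving the time derivative onto $\tau\mapsto e^{A^*(\tau-t)}R^*R\Phi(\tau,t)x$ and collecting terms; the boundary term at $\tau=t$ gives $-(Rx,Ry)_Z$, the derivative of $e^{A^*(\tau-t)}$ contributes $-(Ax,P(t)y)_Y-(P(t)x,Ay)_Y$, and the derivative of $\Phi(\tau,t)$ in its second argument produces $(B^*P(t)x,B^*P(t)y)_U$ via \eqref{dfids} and S4. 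The main obstacle throughout is, unsurprisingly, that the gain $B^*P(t)$ is only defined on $\cD(A^\epsilon)$ rather than on all of $Y$; this forces every computation involving $BB^*P(t)$ to be done on this dense subspace, with the off-shelf singular estimate machinery replaced by the finer $F+G$ splitting at each step.
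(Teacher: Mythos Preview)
The paper does not prove Theorem~\ref{t:theory-tfinite}; it is quoted from the earlier work \cite{abl_2005} (Theorem~2.3 there), as the theorem's heading and the surrounding discussion make explicit. The new contributions of the present paper are the \emph{uniqueness} results, Theorems~\ref{t:dre-uniqueness} and~\ref{t:are-uniqueness}, which take Theorem~\ref{t:theory-tfinite} as established input. So there is no ``paper's own proof'' to compare against here.

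That said, your variational outline is broadly consistent with what the paper reports about the method of \cite{abl_2005}: the optimal cost operator is constructed via the optimality conditions and only afterwards shown to satisfy the DRE. Two remarks on your sketch. First, in your treatment of the $G$-term for S4 you factor through $A^\epsilon\Phi(\tau,t)x$; but the membership $\Phi(\tau,t)x\in\cD(A^\epsilon)$ (equivalently, that the optimal evolution preserves $\cD(A^\epsilon)$) is itself a nontrivial regularity fact, not automatic from $x\in\cD(A^\epsilon)$, and would need its own argument---this is precisely one of the delicate points the $F+G$ machinery must handle. Second, the paper notes (Remarks after Assumptions~\ref{a:ipo_1}) that \cite{abl_2005} actually established S1--S6 under the \emph{weaker} hypothesis iiic)$'$ rather than iiic); so your appeal to iiic) in S1 is sufficient but stronger than what the original existence proof required.
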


\bigskip
Among the fundamental conclusions of Theorem~\ref{t:theory-tfinite} is assertion S6.,
namely the property that the optimal cost operator $P(\cdot)$ defined in \eqref{opRic}
does solve the differential Riccati equation (DRE) corresponding to 
Problem~\eqref{e:state-eq}-\eqref{e:cost}.
That $P(\cdot)$ is actually the {\em unique} solution to the DRE \eqref{e:DRE}, at least 
within an appropriate class of operators, is an issue which was not explicitly dealt with
in the paper \cite{abl_2005}. 

Thus, in order to render the finite time horizon theory devised in \cite{abl_2005} complete, 
we complement assertion S6.~of Theorem~\ref{t:theory-tfinite} about existence of solutions
to the DRE \eqref{e:DRE} with the (novel) achievement of {\em uniqueness}, thereby concluding the proof of 
well-posedness of the DRE.
As we will see, uniqueness is meant within a suitable class -- that is class $\cQ_T$ in \eqref{e:class-tfinite} below -- of linear, bounded, self-adjoint operators also meeting an additional requirement, which is consistent with the regularity property displayed by the gain operator in assertion S4. above.


\begin{theorem}[\bf Uniqueness for the DRE] 
\label{t:dre-uniqueness}
With reference to the control problem \eqref{e:state-eq}--\eqref{e:cost}, let
the Assumptions~\ref{a:ipo_1} hold.
Then,
the differential Riccati equation \eqref{e:DRE} has a unique solution within the class
\begin{equation}\label{e:class-tfinite}
\begin{split}
\cQ_T &=\big\{Q\in C([0,T];\cL(Y))\colon  \; Q(t)=Q(t)^*\ge 0\,, 
\; Q(T)=0\,, 
\\[1mm]
& \myspace\myspace
B^*Q(\cdot)\in \cL(\cD(A^{\epsilon}),C([0,T];U))\big\}\,.
\end{split}
\end{equation}

\smallskip
\noindent
The optimal cost operator $P(\cdot)$ defined by \eqref{opRic} is consequently that solution.

\end{theorem}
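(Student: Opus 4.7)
The plan is to argue by contradiction. The operator $P$ defined by \eqref{opRic} lies in $\cQ_T$ thanks to assertions S3 and S4 of Theorem~\ref{t:theory-tfinite}. Suppose that $P_1\in\cQ_T$ is another solution of \eqref{e:DRE}, and set $Q(t):=P_1(t)-P(t)$. The goal is to show that $Q\equiv 0$, which will be reached by first establishing that the gain $B^*Q(\cdot)A^{-\epsilon}$ vanishes on a right neighbourhood of $T$ and then iterating.

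By Lemma~\ref{l:DtoI}, both $P$ and $P_1$ satisfy the mild integral form of \eqref{e:DRE} on $\cD(A)$. Subtracting the two equations and using the elementary factorization
\begin{equation*}
(B^*P_1)^*(B^*P_1)-(B^*P)^*(B^*P)=(B^*P_1)^*(B^*Q)+(B^*Q)^*(B^*P),
\end{equation*}
the $R^*R$ forcing cancels and one arrives at the homogeneous Volterra-type equation
\begin{equation*}
Q(t)x=-\int_t^T e^{A^*(s-t)}\bigl[(B^*P_1(s))^*B^*Q(s)+(B^*Q(s))^*B^*P(s)\bigr]e^{A(s-t)}x\,ds,
\end{equation*}
valid for $x\in\cD(A)$. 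Because $P_1,P\in\cQ_T$, the truly relevant unknown is not $Q$ itself but the gain $B^*Q(\cdot)A^{-\epsilon}\in C([0,T];\cL(Y,U))$; once this is shown to vanish, substitution back into the above identity gives $Q\equiv 0$ on the same subinterval.

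To estimate $B^*Q(t)A^{-\epsilon}$, apply $B^*e^{A^*(s-t)}$ on the left and $A^{-\epsilon}$ on the right (commuting it through $e^{A(s-t)}$), and invoke the decomposition $B^*e^{A^*(s-t)}=F(s-t)+G(s-t)$ from \eqref{e:key-hypo}. The $F$-component is harmless: its singularity $(s-t)^{-\gamma}$ with $\gamma\in(0,1)$ is integrable by \eqref{e:keyasF}, and the bracket is controlled by the class constraints $B^*P_1(\cdot)A^{-\epsilon},B^*P(\cdot)A^{-\epsilon}\in C([0,T];\cL(Y,U))$ together with the boundedness of $e^{A(s-t)}$. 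The $G$-component is the delicate one: since the bracket naturally takes values in $[\cD(A^{*\epsilon})]'$, an $A^{*\epsilon}$ is absorbed into it and the composition $B^*e^{A^*(s-t)}A^{*\epsilon}$ is treated precisely via iiic) of Assumptions~\ref{a:ipo_1}, which provides the missing $L^q$-regularity, $q\in(1,2)$. After applying Hölder's inequality in time and using the $C([0,T])$-boundedness of $B^*P_1(\cdot)A^{-\epsilon}$ and $B^*P(\cdot)A^{-\epsilon}$ to close the loop, one arrives at an estimate of the form
\begin{equation*}
\sup_{s\in[t,T]}\|B^*Q(s)A^{-\epsilon}\|_{\cL(Y,U)}\le C(T-t)^{\alpha}\sup_{s\in[t,T]}\|B^*Q(s)A^{-\epsilon}\|_{\cL(Y,U)}
\end{equation*}
for some $\alpha>0$ depending on $\gamma,\epsilon$ and $q$.

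Choosing $\delta>0$ with $C\delta^\alpha<1$ forces $B^*Q\equiv 0$ on $[T-\delta,T]$, hence $Q\equiv 0$ on the same subinterval by substitution into the Volterra identity. Since the DRE and the class $\cQ_T$ are time-autonomous, the same reasoning, restarted with terminal time $T-\delta$ in place of $T$, propagates the equality $P_1\equiv P$ backwards across successive intervals of length $\delta$ and yields the conclusion in $\lceil T/\delta\rceil$ steps. The main obstacle is precisely the control of the $G$-component: in contrast with \cite{las-trig-redbooks} neither $e^{At}$ nor $R$ is smoothing here, and $B^*P(\cdot)$ is only bounded on $\cD(A^\epsilon)$, so that the balance between the singularity in $F$, the $L^q$-regularity in iiic), and the class constraint defining $\cQ_T$ has to be arranged tightly; this is the exact point at which the present proof departs from the classical Lasiecka--Triggiani template.
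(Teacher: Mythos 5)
Your overall architecture coincides with the paper's first proof of Theorem~\ref{t:dre-uniqueness}: subtract the two integral Riccati equations of Lemma~\ref{l:DtoI} at $t=T$ so that the $R^*R$ term cancels, recognize that the effective unknown is the gain $V(\cdot)=B^*Q(\cdot)$ rather than $Q$ itself, absorb an ${A^*}^{\epsilon}$ into the adjoint gains (legitimate because $P,P_1\in\cQ_T$ gives ${A^*}^{-\epsilon}(B^*P_1(r))^*\in\cL(U,Y)$), invoke iiic) of Assumptions~\ref{a:ipo_1} through H\"older to produce the contraction factor $(T-s)^{1/q'}$, and iterate backwards from $T$ in finitely many steps. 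This is precisely the paper's estimate \eqref{e:key-estimate}, and your concluding substitution of $B^*Q\equiv 0$ back into the Volterra identity to get $Q\equiv 0$ matches \eqref{e:first-in-first}.

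There is, however, one step that does not work as written: the componentwise use of the decomposition $B^*e^{A^*\tau}=F(\tau)+G(\tau)$ on the integrand. The bracket $(B^*P_1(s))^*B^*Q(s)e^{A(s-t)}x+(B^*Q(s))^*B^*P(s)e^{A(s-t)}x$ takes values in $[\cD(A^{\epsilon})]'$, not in $Y$, so $F(s-t)$ (which is bounded only on $Y$, with the singular estimate \eqref{e:keyasF}) cannot be applied to it; after factoring out ${A^*}^{\epsilon}$ you would need bounds on $F(\tau){A^*}^{\epsilon}$ or $G(\tau){A^*}^{\epsilon}$ separately, and no such hypotheses are available (iiia) controls $G(\cdot){A^*}^{-\epsilon}$, with the opposite sign of the exponent). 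The assumption that saves the day, iiic), concerns the \emph{undecomposed} composition $B^*e^{A^*\tau}{A^*}^{\epsilon}$, and that is how the paper proceeds (via the dual estimate \eqref{e:dual-of-tricky}). Your treatment of what you call the ``$G$-component'' is in fact the correct treatment of the whole operator; simply delete the $F+G$ splitting and the argument closes. Two cosmetic points: the exponent in the contraction is $\alpha=1/q'$, depending only on $q$, and the restart at terminal time $T-\delta$ is licensed by the already-established vanishing $Q(T-\delta)=0$ together with the fact that the constant $C$ is controlled by the norms of $B^*P$ and $B^*P_1$ over all of $[0,T]$, so the step length is uniform.
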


\begin{remark}
\begin{rm}
We give two distinct proofs of Theorem~\ref{t:dre-uniqueness}: a first one in Section~\ref{s:uniqueness_1}
and a second one in Section~\ref{s:uniqueness_2}.
Both proofs utilize the link between the differential and the {\em integral} forms
of the Riccati equation, as clarified in the beginning of the next section.
The standing Assumptions~\ref{a:ipo_1} play a key role in both proofs, as expected, with the trickier iiic) influencing them in a decisive and crucial way.
A major technical step in the longer second proof of uniqueness is the derivation of a {\em fundamental identity} which is classical in control theory, stated as Lemma~\ref{l:id-fondamentale}.
\end{rm}
\end{remark}

\smallskip
In the infinite time horizon case -- i.e., when $T=+\infty$ in \eqref{e:cost} -- 
appropriate requirements on the decay (as $t\to +\infty$) of the semigroup $e^{At}$ as well as
of the component $F(t)$ involved in the decomposition of the operator $B^*e^{A^*t}$ are introduced, which both appear very natural; see \eqref{e:stability} 
and \eqref{e:asymptotics} (the latter being (i)' of Assumptions~\ref{a:ipo_2})
below, respectively.

Interestingly, as a consequence of the aforesaid asymptotic behaviour, the requirements on 
the $L^p$ (in time) regularity of the component $G(\cdot)$ as well as of the operator 
$B^*e^{A^*\cdot}{A^*}^\epsilon$ will need to hold only on a certain bounded interval $[0,T]$,
rather than on the entire half-line $(0,\infty)$.
\\
For the sake of completeness and the reader's convenience, the hypotheses pertaining to the infinite time horizon are wholly recorded below.

\begin{assumptions}[\bf Infinite time horizon case]\label{a:ipo_2} 
Let $Y$, $U$ and $Z$ be separable complex Hilbert spaces, and let the basic Assumptions~\ref{a:ipo_0} be valid, with the additional property that the $C_0$-semigroup $e^{At}$ is exponentially stable on $Y$,
$t\ge 0$; namely, there exist constants $M\ge 1$ and $\omega>0$ such that 
\begin{equation} \label{e:stability}
\|e^{At}\|_{\cL(Y)} \le M \,e^{-\omega t} \qquad \forall t\ge 0\,.
\end{equation}
Then in particular, $A^{-1}\in \cL(Y)$.

The operator $B^*e^{A^*t}$ admits the decomposition \eqref{e:key-hypo}, where  
$F(t)\colon Y\longrightarrow U$, $t \ge 0$, is a bounded linear operator such that 
\begin{enumerate}
\item[i)'] 
there exist constants $\gamma\in (0,1)$ and $N, \eta>0$ such that 
\begin{equation} \label{e:asymptotics}
\|F(t)\|_{\cL(Y,U)} \le N\,t^{-\gamma}\,e^{-\eta t}\qquad \forall t>0\,,
\end{equation}
\end{enumerate}
while ii)-iiia)-iiib)-iiic) of the Assumptions~\ref{a:ipo_1} on the (linear, bounded) component 
$G(t)\colon \cD(A^*)\longrightarrow U$, $t\ge 0$, hold true for some $T>0$.
\end{assumptions}

We note that the functional \eqref{e:cost} with $T=+\infty$ makes sense at least
for $u\equiv 0$.
This again in view of the exponential stability of the semigroup $e^{At}$ 
(\eqref{e:stability} of Assumptions~\ref{a:ipo_2}), which combined with \eqref{e:basic-for-r} ensures 
$Ry(\cdot,y_0;0)\in L^2(0,\infty;Y)$.
\\
(The analysis carried out in the present paper easily extends to more general quadratic functionals, like
\begin{equation*}
J(u)=\int_0^\infty \big(\|Ry(t)\|_Z^2 + \|\tilde{R}u(t)\|_U^2\big)dt\,,
\end{equation*}
provided $\tilde{R}$ is a coercive operator in $U$. 
We take $\tilde{R}=I$ just for the sake of simplicity and yet without loss of generality.)


\begin{theorem}[Infinite time horizon theory; cf.~\cite{abl_2013}, Theorem~1.5] 
\label{t:theory-tinfinite}
Under the Assumptions~\ref{a:ipo_2}, the following statements are valid.
\begin{enumerate}
\item[\bf A1.] 
For any $y_0\in Y$ there exists a unique optimal pair 
$(\hat{u}(\cdot),\hat{y}(\cdot))$ for Problem~\eqref{e:state-eq}-\eqref{e:cost}, 
which satisfies the following regularity properties
\begin{align*}
& \hat{u}\in \bigcap_{2\le p<\infty} L^p(0,\infty;U)\,,
\\
& \hat{y}\in C_b([0,\infty);Y) \cap \big[\bigcap_{2\le p< \infty} L^p(0,\infty;Y)\big]\,.
\end{align*}
\item[\bf A2.] 
The family of operators $\Phi(t)$, $t\ge 0$, defined by 
\begin{equation} \label{e:optimal-state-semigroup}
\Phi(t)y_0 :=\hat{y}(t)=y(t,y_0;\hat{u})
\end{equation}
is a $C_0$-semigroup on $Y$, $t\ge 0$, which is exponentially stable.
\item[\bf A3.] 
The operator $P\in\cL(Y)$ defined by 
\begin{equation} \label{e:optimal-cost-op}
Px := \int_0^\infty e^{A^*t}R^*R \Phi(t)x\, dt, \qquad x\in Y,
\end{equation}
is the {\em optimal cost operator}; $P$ is (self-adjoint and) non-negative.
\item[\bf A4.] 
The following ({\em pointwise in time}) feedback representation of the optimal control 
is valid for any initial state $y_0\in Y$:
\begin{equation*} 
\hat{u}(t) = - B^*P \hat{y}(t) \qquad \textrm{for a.e. $t\in (0,\infty)$},
\end{equation*}
where the gain operator satisfies $B^*P\in \cL(\cD(A^\epsilon),U)$
(that is, it is just densely defined on $Y$ and yet it is bounded on $\cD(A^\epsilon)$).
\item[\bf A5.] 
The infinitesimal generator $A_P$ of the (optimal state) semigroup $\Phi(t)$ 
defined in \eqref{e:optimal-state-semigroup} coincides with the operator $A(I-A^{-1}BB^*P)$;
more precisely,
\begin{align*}
& A_P\equiv A(I-A^{-1}BB^*P)\,, \\[1mm]
& \cD(A_P)\subset \big\{x\in Y: \; 
x-A^{-1}B\,B^*Px\in \cD(A) \big\}\,.
\end{align*}
\item[\bf A6.] 
The operator $e^{At}B$, defined in $U$ and a priori with values in $[\cD(A^*)]'$,
is such that
\begin{equation}\label{e:tricky-regularity}
e^{\delta\cdot}e^{A\cdot}B\in \cL(U,L^p(0,\infty;[\cD({A^*}^{\epsilon})]') 
\quad 
\forall p\in [1,1/{\gamma})
\end{equation}
for all $\delta\in [0,\omega\wedge \eta)$; 
almost the very same regularity is inherited by the operator $\Phi(t)B$:
\begin{equation*}\label{e:low-regularity}
e^{\delta\cdot}\Phi(\cdot)B\in \cL(U,L^p(0,\infty;[\cD({A^*}^{\epsilon})]') 
\quad 
\forall p\in [1,1/{\gamma})\,,
\end{equation*}
with $\delta>0$ sufficiently small.
\item[\bf A7.] 
The optimal cost operator $P$ defined in \eqref{e:optimal-cost-op} is a solution to 
the algebraic Riccati equation (ARE) corresponding to Problem~\eqref{e:state-eq}-\eqref{e:cost},
that is \eqref{e:ARE}.
The ARE reads as
\begin{equation*}
(A^*Px,z)_Y+(x,A^*Pz)_Y-(B^*Px,B^*Pz)_U+(Rx,Rz)_Z=0  
\end{equation*}
when $x,z\in \cD(A_P)$.

\end{enumerate}

\end{theorem}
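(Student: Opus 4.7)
The plan is to follow the variational approach of \cite{abl_2005, abl_2013}. Exponential stability \eqref{e:stability} combined with \eqref{e:basic-for-r} guarantees $J(0;y_0)<\infty$ for every $y_0\in Y$, and $J(\cdot;y_0)$ is strictly convex, weakly lower semicontinuous and coercive on $L^2(0,\infty;U)$ thanks to the $\|u\|_U^2$ term. A minimizing sequence therefore converges weakly to a unique optimal control $\hat{u}$, yielding A1 (the improved $L^p$ integrability to be extracted a posteriori from the feedback form using i)' and ii)--iiia)). For A2 one invokes Bellman's dynamic programming principle: the tail of an optimal trajectory starting at $y_0$ from time $s$ is itself optimal starting from $\hat{y}(s)$, so $\Phi(t)y_0:=\hat{y}(t)$ is a semigroup; its exponential stability follows by first establishing $\Phi(\cdot)y_0\in L^2(0,\infty;Y)$ (by comparison with the admissible zero control) and then invoking Datko's theorem.

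For A3, boundedness, self-adjointness and non-negativity of $P$ defined by \eqref{e:optimal-cost-op} are read off from exponential stability of $e^{At}$ and $\Phi(t)$ combined with \eqref{e:basic-for-r}; the identity $(Py_0,y_0)_Y=J(\hat{u})$ is a direct computation. The pointwise feedback A4 is the Euler--Lagrange condition for $\hat{u}$, rewritten using \eqref{e:optimal-cost-op}. To verify that $B^*P$ is defined and bounded on $\cD(A^\epsilon)$, one splits, using \eqref{e:key-hypo},
\begin{equation*}
B^*Px = \int_0^\infty F(t)R^*R\Phi(t)x\,dt + \int_0^\infty G(t)R^*R\Phi(t)x\,dt\,.
\end{equation*}
The first integral converges absolutely in $U$ by \eqref{e:asymptotics} since $\gamma<1$; for $x\in \cD(A^\epsilon)$ the second integrand is rewritten as $[G(t){A^*}^{-\epsilon}]\cdot [{A^*}^{\epsilon}R^*RA^{-\epsilon}]\cdot [A^{\epsilon}\Phi(t)x]$, and each of the three factors is controlled respectively by iiia), iiib), and the exponential decay of $\Phi(t)$ combined with $x\in \cD(A^\epsilon)$. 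Assertion A5 then follows from $\frac{d}{dt}\Phi(t)x = A\Phi(t)x - BB^*P\Phi(t)x$ in a suitable extrapolation sense, recognizing the right-hand side as $A(I-A^{-1}BB^*P)\Phi(t)x$ after applying $A^{-1}$.

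A6 is obtained by dualizing iiic): the extension of $x\mapsto B^*e^{A^*t}{A^*}^\epsilon x$ in $\cL(Y,L^q(0,T;U))$ yields, by adjoint action, $e^{A\cdot}B\in \cL(U,L^p(0,T;[\cD({A^*}^\epsilon)]'))$ for $p=q'<1/\gamma$; the tail on $(T,\infty)$ together with the weight $e^{\delta t}$ is accommodated by \eqref{e:stability}--\eqref{e:asymptotics} for any $\delta<\omega\wedge\eta$. The corresponding estimate for $\Phi(t)B$ is inherited via the variation-of-constants relation
\begin{equation*}
\Phi(t)Bu = e^{At}Bu - \int_0^t e^{A(t-s)}BB^*P\,\Phi(s)Bu\,ds
\end{equation*}
and a bootstrap in the spaces of A6, exploiting the boundedness of $B^*P$ established in A4. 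Finally, A7 follows by formally differentiating $t\mapsto (P\Phi(t)x,\Phi(t)z)_Y$ for $x,z\in \cD(A_P)$, using A4--A5 to rewrite the derivative as the Riccati nonlinearity, and passing to the limit $t\to 0^+$.

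\textbf{The main obstacle} is the derivation of A4: making sense of $B^*P$ as a bounded operator on $\cD(A^\epsilon)$, in the absence of analyticity of $e^{At}$ or of a singular estimate on $e^{At}B$ itself, requires precisely the interplay of all three parts of iii) of Assumptions~\ref{a:ipo_1}---none of which alone suffices---combined with the exponential decay i)' and the fact that $\Phi(t)$ inherits enough smoothing from $A_P$ to render $A^\epsilon\Phi(t)x$ integrable. This is the structural novelty over the parabolic and hyperbolic settings and is what had to be overcome already in the existence theory of \cite{abl_2013}.
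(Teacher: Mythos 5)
You should first be aware that the paper does not prove Theorem~\ref{t:theory-tinfinite} at all: it is recalled verbatim from \cite{abl_2013} (Theorem~1.5 there), and the present paper's own contribution for the infinite horizon is only the uniqueness statement, Theorem~\ref{t:are-uniqueness}. So there is no in-paper proof to match; your outline can only be compared with the variational strategy of \cite{abl_2013} as the paper describes it (minimize first, build $P$ from the optimal state, derive the feedback and the ARE afterwards), and at that level your plan is the right one. The overall architecture --- coercivity for A1, Bellman plus Datko for A2, Euler--Lagrange plus the $F/G$ splitting for A4 --- is consistent with that reference.

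There are, however, concrete gaps in the delicate steps. In A4 you control the factor $A^{\epsilon}\Phi(t)x$ by ``the exponential decay of $\Phi(t)$ combined with $x\in\cD(A^{\epsilon})$'', but $\Phi(t)$ is generated by $A_P$, not by $A$; it does not commute with $A^{\epsilon}$ and there is no a priori reason it preserves $\cD(A^{\epsilon})$. The correct route is through the representation $\Phi(t)x=e^{At}x+(L\hat{u})(t)$ together with the $\cD(A^{\epsilon})$-valued regularity of $L$ on $L^{q'}$ controls (assertion (v) of Proposition~\ref{p:2013-P3.6}, the half-line analogue of Lemma~\ref{l:improved}), and that regularity is precisely where iiic) is consumed --- not in A6, as you assert. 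Relatedly, Assumptions~\ref{a:ipo_2} impose ii)--iiic) only on a bounded interval $[0,T]$; every integral over $(0,\infty)$ in your argument needs the propagation of these estimates to the half line with exponential weights (Proposition~\ref{p:2013-P3.2}), a step you omit. For the first claim of A6, the exponent range $p<1/\gamma$ comes from the singular estimate i)$'$ on $F$ together with iiia) applied to $G(t){A^*}^{-\epsilon}$ (since $\|e^{At}Bu\|_{[\cD({A^*}^{\epsilon})]'}\sim\|[B^*e^{A^*t}{A^*}^{-\epsilon}]^*u\|_Y$), whereas dualizing iiic) yields the different statement of Lemma~\ref{l:duali}(a) and an $L^{q'}$ exponent unrelated to $1/\gamma$. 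Finally, your bootstrap for $\Phi(\cdot)B$ applies $B^*P$ to $\Phi(s)Bu$, which lives in $[\cD({A^*}^{\epsilon})]'$, far outside the set $\cD(A^{\epsilon})$ on which A4 makes $B^*P$ bounded; as written that closed-loop formula is not meaningful and this part of A6 requires a separate argument.
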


\smallskip
In order to render the infinite time horizon theory devised in \cite{abl_2013} complete, we complement assertion A7. of Theorem~\ref{t:theory-tinfinite} about existence of solutions to the ARE \eqref{e:ARE} corresponding to Problem~\eqref{e:state-eq}-\eqref{e:cost},
with the achievement of {\em uniqueness}, thereby concluding the proof of 
well-posedness of the ARE.


\begin{theorem}[\bf Uniqueness for the ARE] 
\label{t:are-uniqueness}
Consider the optimal control problem~\eqref{e:state-eq}-\eqref{e:cost}, with $T=+\infty$, 
under the Assumptions~\ref{a:ipo_2}.
Then, 
the algebraic Riccati equation \eqref{e:ARE} has a unique solution $P$ within the class $\cQ$ 
defined as follows:
\begin{equation} \label{e:classeQ}
\cQ :=\big\{ Q\in \cL(Y)\colon Q=Q^*\ge 0\,, \; B^*Q\in \cL(\cD(A^\epsilon),U)\big\}\,.
\end{equation} 

\smallskip
\noindent
The optimal cost operator $P$ defined by \eqref{e:optimal-cost-op} is consequently that 
solution.

\end{theorem}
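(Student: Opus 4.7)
The plan is to adapt to the infinite horizon the dynamic-programming strategy used in the second proof of Theorem~\ref{t:dre-uniqueness}, centred on the fundamental identity of Lemma~\ref{l:id_fond_infty} and on the closed-loop analysis of Lemma~\ref{l:closedloopinfty}. Let $P_1 \in \cQ$ be any solution of the ARE~\eqref{e:ARE}. I would aim to establish the two-sided bound $(P_1 y_0, y_0)_Y = (P y_0, y_0)_Y$ for every $y_0 \in Y$; since both operators are self-adjoint, polarization then forces $P_1 = P$.

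For the first inequality, I would take an arbitrary admissible pair $(u, y)$ with $y(0) = y_0$ and, using the ARE together with the state equation~\eqref{e:state-eq}, derive by a completion of the square the identity
\begin{equation*}
\frac{d}{dt}(P_1 y(t), y(t))_Y + \|R y(t)\|_Z^2 + \|u(t)\|_U^2 = \|u(t) + B^*P_1 y(t)\|_U^2 .
\end{equation*}
This step is only formal unless carried out on sufficiently regular data; I would first justify it on a dense subset of initial states and controls for which $B^*P_1 y(t)$ is well defined via $B^*P_1 \in \cL(\cD(A^\epsilon), U)$, and then extend by continuity. Integrating on $[0,T]$ and sending $T \to +\infty$, the exponential decay~\eqref{e:stability} together with $y \in L^2(0,\infty;Y)$ makes $(P_1 y(T), y(T))_Y$ vanish in the limit, yielding the content of Lemma~\ref{l:id_fond_infty}:
\begin{equation*}
J(u) = (P_1 y_0, y_0)_Y + \int_0^{+\infty} \|u(t) + B^*P_1 y(t)\|_U^2 \, dt .
\end{equation*}
Specializing $u = \hat u$ (the unique optimal control of A4., Theorem~\ref{t:theory-tinfinite}) gives $(P_1 y_0, y_0)_Y \le J(\hat u) = (P y_0, y_0)_Y$.

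For the reverse inequality, I would invoke Lemma~\ref{l:closedloopinfty} to set up the closed-loop Cauchy problem $y_1'(t) = (A - BB^*P_1) y_1(t)$, $y_1(0) = y_0$, and obtain a trajectory $y_1$ regular enough that the pair $(u_1, y_1) := (-B^*P_1 y_1, y_1)$ qualifies as admissible with $u_1 \in L^2(0,\infty; U)$. Plugging $u = u_1$ into the fundamental identity makes the quadratic term vanish, so $J(u_1) = (P_1 y_0, y_0)_Y$; optimality of $\hat u$ then yields the opposite bound $(P y_0, y_0)_Y = J(\hat u) \le J(u_1) = (P_1 y_0, y_0)_Y$, closing the argument.

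The hardest part will be the rigorous justification of the two ingredients on which everything rests. Because the gain operator $B^*P_1$ is only defined on the dense subset $\cD(A^\epsilon)$ rather than on all of $Y$, the fundamental identity cannot be differentiated pointwise and must be produced by approximation relying on a sharp description of the state regularity along admissible trajectories; this in turn leans on property~iiic) of Assumptions~\ref{a:ipo_1} (inherited via Assumptions~\ref{a:ipo_2}). The closed-loop step is more delicate still: the perturbation $A - BB^*P_1$ is severely unbounded, and well-posedness together with the membership $u_1 \in L^2(0,\infty;U)$ should be extracted through a fixed-point/absorption argument exploiting the kernel regularity~\eqref{e:tricky-regularity} in A6. and the exponential stability~\eqref{e:stability}.
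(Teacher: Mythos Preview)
Your overall strategy matches the paper's: use the fundamental identity (Lemma~\ref{l:id_fond_infty}) together with the closed-loop equation (Lemma~\ref{l:closedloopinfty}) to sandwich $(P_1 y_0,y_0)_Y$ between $(Py_0,y_0)_Y$ from both sides, then polarize and extend by density. However, two of your steps are ordered incorrectly, and as written they constitute genuine gaps.

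First, Lemma~\ref{l:id_fond_infty} is a \emph{finite-time} identity on $[0,t]$, not an identity on $[0,\infty)$. You cannot pass to $T=+\infty$ for an \emph{arbitrary} admissible pair: neither $y\in L^2(0,\infty;Y)$ nor $y(T)\to 0$ follows from $u\in L^2(0,\infty;U)$ alone under the present assumptions. The paper handles this by inserting the truncated control $u_T=\hat u\cdot\chi_{[0,T]}$, whose trajectory beyond $T$ is driven solely by the exponentially stable semigroup $e^{At}$; one lets $t\to\infty$ first, then $T\to\infty$. If you intend to apply the identity only at $u=\hat u$, then $\hat y(T)\to 0$ is available from the exponential stability of $\Phi(t)$ in A2., and your argument can be repaired; but as stated the claim is unjustified.

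Second, and more seriously, your reverse inequality has the logic inverted. Lemma~\ref{l:closedloopinfty} produces a solution $y_1$ only in a weighted space with weight $e^{-rt}$ for \emph{large} $r>0$, so a priori $y_1$ may grow exponentially and $u_1=-B^*P_1 y_1$ is merely in $L^2_{\mathrm{loc}}$. The membership $u_1\in L^2(0,\infty;U)$ is \emph{not} obtained from a fixed-point or absorption argument; it is read off from the finite-time identity itself. Since $P_1\ge 0$, dropping the boundary term $(P_1 y_1(t),y_1(t))_Y$ gives
\[
(P_1 y_0,y_0)_Y \;\ge\; \int_0^t\big(\|Ry_1(s)\|_Z^2+\|u_1(s)\|_U^2\big)\,ds\qquad\text{for every }t>0,
\]
and letting $t\to\infty$ yields simultaneously $u_1\in L^2(0,\infty;U)$, $Ry_1\in L^2(0,\infty;Z)$, and $(P_1 y_0,y_0)_Y\ge J(u_1)\ge J(\hat u)=(Py_0,y_0)_Y$. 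Note also that you only get the inequality $(P_1 y_0,y_0)_Y\ge J(u_1)$, not the equality you wrote, because $(P_1 y_1(t),y_1(t))_Y$ need not tend to zero; fortunately the inequality is all that is required.
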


\begin{remark}
\begin{rm}
As in the finite time horizon case, the (linear, bounded, self-adjoint) operators that
belong to the class $\cQ$ in \eqref{e:classeQ} are characterized by
a requirement that is consistent with the regularity property displayed by the gain operator in assertion A4. of Theorem~\ref{t:theory-tinfinite}.
\end{rm}
\end{remark}


\section{A first proof of uniqueness for the DRE} \label{s:uniqueness_1}
In this Section we derive integral forms of both the differential and algebraic Riccati equations,
and present a first proof of Theorem~\ref{t:dre-uniqueness}.
The argument employed in this proof is pretty standard: the difference 
between another possible solution $P_1(t)$ to the DRE and the Riccati operator $P(t)$ 
is shown -- in a series of steps -- to be identically zero on the interval $[0,T]$.
The final goal is attained fully exploiting the Assumptions~\ref{a:ipo_1} and more specifically iiic),
having taken as a starting point the integral form of the DRE.
 
By contrast, in the next Section~\ref{s:uniqueness_2} a unified approach and method of proof will prove effective in showing uniqueness for both cases. 


\subsection{Finite time interval, differential Riccati equations}
In this subsection we make reference to the optimal control problem \eqref{e:state-eq}--\eqref{e:cost},
with $T<+\infty$.
We address the issue of uniqueness of solutions to the Cauchy problem \eqref{e:DRE} for the Riccati equation corresponding to problem \eqref{e:state-eq}--\eqref{e:cost}, under the Assumptions~\ref{a:ipo_1}.

We begin by relating the differential form \eqref{e:DRE} of the Riccati equation to
an integral form of it, which in turn can be further interpreted.

 
\begin{lemma}[Integral forms of the Riccati equation]  
\label{l:DtoI} 
Let $\cQ_T$ be the class defined in \eqref{e:class-tfinite}, and let 
$Q(\cdot)\in \cQ_T$ be a solution to the DRE \eqref{e:DRE}. 
Then the following assertions hold true.

\smallskip
\noindent
{\bf 1.}
$Q(\cdot)$ solves the integral Riccati equation (in short, IRE), that is 
\begin{equation} \label{e:ire}
\begin{split}
&\big(Q(t)e^{A(t-s)}x,e^{A(t-s)}y\big)_Y 
= (Q(s)x,y)_Y -\int_s^t \big(Re^{A(r-s)}x,Re^{A(r-s)}y\big)_Z\,dr
\\[1mm]
& \myspace + \int_s^t \big(B^*Q(r)e^{A(r-s)}x,B^*Q(r)e^{A(r-s)}y\big)_U\,dr\,,
\end{split}
\end{equation} 
with $0\le s\le t\le T$ and $x,y\in \cD(A^\epsilon)$.

\medskip
\noindent
{\bf 2.}
$B^*Q(\cdot)e^{A(\cdot-s)}\in \cL(Y, L^2(s,T;U))$.

\medskip
\noindent
{\bf 3.} 
The IRE \eqref{e:ire} can be rewritten in the form
\begin{equation} \label{e:ire_bis}
\begin{split}
& \big(e^{A^*(t-s)}Q(t)e^{A(t-s)}x,y\big)_Y 
= (Q(s)x,y)_Y -\int_s^t \big(e^{A^*(r-s)}R^*Re^{A(r-s)}x,y\big)_Y\,dr
\\[1mm]
& \myspace + \int_s^t \big(e^{A^*(r-s)}Q(r)BB^*Q(r)e^{A(r-s)}x,y\big)_Y\,dr\,,
\end{split}
\end{equation} 
valid for any $x,y\in Y$ and with $0\le s\le t\le T$.

\end{lemma}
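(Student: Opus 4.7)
My strategy is to treat the three assertions in order: first I derive the IRE \eqref{e:ire} from the DRE \eqref{e:DRE} via a chain-rule/integration argument followed by a density extension, then I read off the $L^2$ bound in Part~2 directly from \eqref{e:ire} via the positivity of $Q(\cdot)$, and finally I obtain \eqref{e:ire_bis} from \eqref{e:ire} by adjoint manipulations and a further density step.

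\medskip
\emph{Part 1.} Fix $x, y \in \cD(A)$ and set $u(\rho) := e^{A(\rho-s)}x$, $v(\rho) := e^{A(\rho-s)}y$, both lying in $\cD(A)$ for every $\rho \in [s,T]$ and strongly $C^1$ with $u'(\rho) = Au(\rho)$, $v'(\rho) = Av(\rho)$. The function $g(\rho) := (Q(\rho)u(\rho), v(\rho))_Y$ is then shown to be differentiable via the three-term increment splitting
\begin{equation*}
g(\rho+h) - g(\rho) = \big((Q(\rho+h)-Q(\rho))u(\rho+h), v(\rho+h)\big)_Y + \big(Q(\rho)(u(\rho+h)-u(\rho)), v(\rho+h)\big)_Y + \big(Q(\rho)u(\rho), v(\rho+h)-v(\rho)\big)_Y\,;
\end{equation*}
the first increment is handled by applying the DRE pointwise at the fixed vectors $u(\rho), v(\rho) \in \cD(A)$, the error from replacing $u(\rho+h), v(\rho+h)$ by $u(\rho), v(\rho)$ being controlled since $Q \in C([0,T];\cL(Y))$ in the operator norm; the other two increments are handled by strong differentiability of $u, v$. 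After division by $h$ and passage to the limit, the four $A$-cross terms cancel pairwise thanks to self-adjointness of $Q(\rho)$, leaving
\begin{equation*}
g'(\rho) = -(Re^{A(\rho-s)}x, Re^{A(\rho-s)}y)_Z + (B^*Q(\rho)e^{A(\rho-s)}x, B^*Q(\rho)e^{A(\rho-s)}y)_U\,.
\end{equation*}
Integrating from $s$ to $t$ yields \eqref{e:ire} on $\cD(A) \times \cD(A)$; the extension to $\cD(A^\epsilon) \times \cD(A^\epsilon)$ follows by density, using that $e^{A(\rho-s)}$ commutes with $A^\epsilon$ and that $B^*Q(\cdot) \in \cL(\cD(A^\epsilon), C([0,T];U))$---both features of the class $\cQ_T$.

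\medskip
\emph{Part 2.} Setting $x = y \in \cD(A^\epsilon)$ in \eqref{e:ire} and isolating the quadratic gain term gives
\begin{equation*}
\int_s^t \|B^*Q(\rho)e^{A(\rho-s)}x\|_U^2\, d\rho = \big(Q(t)e^{A(t-s)}x, e^{A(t-s)}x\big)_Y - (Q(s)x,x)_Y + \int_s^t \|Re^{A(\rho-s)}x\|_Z^2\, d\rho\,.
\end{equation*}
Dropping the nonpositive contribution $-(Q(s)x,x)_Y$ (recall $Q(s)\ge 0$) and invoking the uniform bounds on $Q(t)$, $e^{At}$ and $R$ on $[0,T]$, one obtains $\int_s^T \|B^*Q(\rho)e^{A(\rho-s)}x\|_U^2\, d\rho \le C_T\|x\|_Y^2$ for all $x \in \cD(A^\epsilon)$, hence for all $x \in Y$ by density.

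\medskip
\emph{Part 3.} The $Q$- and $R$-terms in \eqref{e:ire} are put in the form of \eqref{e:ire_bis} via the adjoint identity $(Tz,w)_Y = (z,T^*w)_Y$; the gain integrand $(B^*Q(r)e^{A(r-s)}x, B^*Q(r)e^{A(r-s)}y)_U$ is likewise identified with $(e^{A^*(r-s)}Q(r)BB^*Q(r)e^{A(r-s)}x, y)_Y$, the composite operator being a formal shorthand for $K(r)^*K(r)$ with $K(r) := B^*Q(r)e^{A(r-s)}$. By Part~2, $K \in \cL(Y, L^2(s,T;U))$, so the Cauchy--Schwarz inequality makes $\int_s^t(K(r)x, K(r)y)_U\,dr$ a bounded bilinear form on $Y \times Y$; the other terms are manifestly bounded. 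Since the identity holds on the dense subspace $\cD(A^\epsilon) \times \cD(A^\epsilon)$ by Part~1, it extends to all of $Y \times Y$, yielding \eqref{e:ire_bis}. The delicate point throughout lies in Part~1: unboundedness of $B$ precludes any direct operator-level manipulation on $Y$, so one keeps the starting data in $\cD(A)$ throughout the chain-rule computation and then relies on the specific regularity encoded in $\cQ_T$---itself a reflection of iiic) of Assumptions~\ref{a:ipo_1}---to extend cleanly to $\cD(A^\epsilon)$, and, in Part~3, ultimately to all of $Y$.
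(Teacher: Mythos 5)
Your proof is correct and follows essentially the same route as the paper's: differentiate $\big(Q(r)e^{A(r-s)}x,e^{A(r-s)}y\big)_Y$ for $x,y\in \cD(A)$ using the DRE, integrate, extend by density to $\cD(A^\epsilon)$ (the paper packages this density step via Lemma~\ref{l:gain-on-functions}), deduce the $L^2$ bound from $Q\ge 0$, and pass to $Y\times Y$ by density for the second form. The only cosmetic difference is in Part~2, where the paper takes $t=T$ so that the boundary term vanishes via $Q(T)=0$, whereas you keep general $t$ and bound it directly; both are fine.
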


\begin{proof}
{\bf 1.} 
Let $x,y\in \cD(A)$: then $e^{A\cdot}x$, $e^{A\cdot}y$ are differentiable, and therefore, 
using \eqref{e:DRE}, there exists
\begin{equation*} 
\begin{split}
&\frac{d}{dr}\big(Q(r)e^{A(r-s)}x,e^{A(r-s)}y\big)_Y 
\\[2mm]
& \quad 
= -\big(Q(r)e^{A(r-s)}x,Ae^{A(r-s)}y\big)_Y- \big(Ae^{A(r-s)}x,Q(r)e^{A(r-s)}y\big)_Y
\\[2mm]
& \qquad - \big(Re^{A(r-s)}x,Re^{A(r-s)}y\big)_Z\,+ \big(B^*Q(r)e^{A(r-s)}x,B^*Q(r)e^{A(r-s)}y\big)_U
\\[2mm]
& \qquad 
+ \big(Q(r)Ae^{A(r-s)}x,e^{A(r-s)}y\big)_Y+ \big(Q(r)e^{A(r-s)}x,Ae^{A(r-s)}y\big)_Y
\\[2mm]
& \quad 
= - \big(Re^{A(r-s)}x,Re^{A(r-s)}y\big)_Z\, + \big(B^*Q(r)e^{A(r-s)}x,B^*Q(r)e^{A(r-s)}y\big)_U\,.
\end{split}
\end{equation*}
Integrating the above identity in $r\in [s,t]$, one readily obtains the IRE \eqref{e:ire}, 
valid for $x,y\in \cD(A)$.
In view of Lemma~\ref{l:gain-on-functions}, the validity of the IRE is extended to all
$x,y\in \cD(A^\epsilon)$ by density.
 
\smallskip
\noindent
{\bf 2.}
By taking now in \eqref{e:ire} $t=T$, $x=y\in \cD(A^\epsilon)$, since $P(T)=0$ we establish
\begin{equation*}
\int_s^T \big\|B^*P(r)e^{A(r-s)}x\big\|_U^2\,dr \le \int_s^T \big\|Re^{A(r-s)}x\big\|_Z^2\,dr \le C\|x\|_Y^2
\end{equation*}
by density.

\smallskip
\noindent
{\bf 3.}
The equivalent form \eqref{e:ire_bis} of the IRE follows in view of {\bf 2.}~and by density.

\end{proof}


\medskip
\noindent
{\em A first proof of Theorem~\ref{t:dre-uniqueness}.}
We follow the proof of Theorem~1.5.3.3 in \cite{las-trig-redbooks}, up to a point.
The subsequent arguments and estimates are driven by 
the distinctive assumptions on the adjoint of the kernel $e^{At}B$, as well as by the different
class of regularity the solutions to the DRE are sought. 

We know already that the optimal cost operator $P(\cdot)$ defined by \eqref{opRic} solves
(the Cauchy problem \eqref{e:DRE} for) the differential Riccati equation, as well as that $P\in \cQ_T$.
Assume there exists another operator in $\cQ_T$, say $P_1(\cdot)$, which solves \eqref{e:DRE},
and set $Q(t):=P_1(t)-P(t)$, $t\in [0,T]$; we aim to prove that $Q(t)\equiv 0$.
By construction $Q(\cdot)\in \cQ_T$. 
By Lemma~\ref{l:DtoI}, both $P_1(\cdot)$ and $P(\cdot)$ satisfy the IRE \eqref{e:ire}.
Then, taking in particular $t=T$, we find that $Q(s)$ satisfies 
\begin{equation} \label{e:first-in-first} 
\begin{split}
(Q(s)x,y)_Y &= -\int_s^T (B^*Q(r)e^{A(r-s)}x, B^*P_1(r)e^{A(r-s)}y)_U\,dr 
\\[1mm]
& \qquad - \int_s^T (B^*P(r)e^{A(r-s)}x,B^*Q(r)e^{A(r-s)}y)_U\,dr\,,
\end{split}
\end{equation}
for any $x,y\in D(A^\epsilon)$.
To render the computations cleaner, set $V(r):=B^*Q(r)$ (that $r$ belongs to $[s,T]$ is omitted here and below, as clear from the context).
Because $Q(\cdot)\in \cQ_T$, it holds $V(r)^*\in \cL(U,[\cD(A^\epsilon)]')$, along with 
\begin{equation*}
\|V(r)^*\|_{\cL(U,[\cD(A^\epsilon)]')}=\|V(r)\|_{\cL(\cD(A^\epsilon),U)} 
\le \|V(\cdot)\|_{\cL(\cD(A^\epsilon),C([s,T];U)}=:c\,.
\end{equation*}
We see that
\begin{equation*}
\left|\langle V(r)^*w,y\rangle_{[\cD(A^\epsilon)]',\cD(A^\epsilon)}\right| 
\le c\,\|w\|_U\|y\|_{D(A^\epsilon)}
\end{equation*}
consequently, as well as that ${A^*}^{-\epsilon} V(r)^*\in \cL(U,Y)$, with
\begin{equation*}
\big|\big({A^*}^{-\epsilon} V(r)^*w,x\big)_Y\big| 
=\left|\langle V(r)^*w,A^{-\epsilon}x\rangle_{[\cD(A^\epsilon)]',\cD(A^\epsilon)} \right| 
\le c \,\|w\|_U\|x\|_Y\,.
\end{equation*}
The same observations apply to $[B^*P_1(r)]^*$ and $[B^*P(r)]^*$, bringing about analogous estimates.

We may now rewrite \eqref{e:first-in-first}  as
\begin{equation*} 
\begin{split}
(Q(s)x,y)_Y & = -\int_s^T (e^{A^*(r-s)}{A^*}^{-\epsilon}[B^*P_1(r)]^* V(r)e^{A(r-s)}x,A^\epsilon y)_Y\,dr
\\[1mm]
& \qquad - \int_s^T (e^{A^*(r-s)}{A^*}^{-\epsilon}V(r)^* B^*P(r)e^{A(r-s)}x,A^\epsilon y)_Y\,dr\,,
\end{split}
\end{equation*}
which tells us that
\begin{equation*} 
\begin{split} 
& {A^*}^\epsilon\int_s^T \Big[e^{A^*(r-s)}{A^*}^{-\epsilon}[B^*P_1(r)]^* V(r)e^{A(r-s)}
\\[1mm]
& \myspace
+ e^{A^*(r-s)}{A^*}^{-\epsilon}V(r)^* B^*P(r)e^{A(r-s)}\Big]x\,dr\,,
\end{split}
\end{equation*}
a priori an element of $[\cD(A^\epsilon)]'$, in fact coincides with $-Q(s)x\in Y$
by the very definition of adjoint operator. 
We deduce 
\begin{equation} \label{e:pre-bstar}
\begin{split} 
Q(s)x 
& = -{A^*}^\epsilon \int_s^T \Big[e^{A^*(r-s)}{A^*}^{-\epsilon}[B^*P_1(r)]^* V(r)e^{A(r-s)}
\\[1mm]
& \myspace
+ e^{A^*(r-s)}{A^*}^{-\epsilon}V(r)^* B^*P(r)e^{A(r-s)}\Big]x\,dr\,,
\end{split}
\end{equation}
valid for every $x\in D(A^\epsilon)$, where, as pointed out above, the right hand side is an element of $Y$.
As $x\in \cD(A^\epsilon)$, $B^*Q(s)x$ is meaningful, and we are allowed to apply $B^*$ to both sides
of \eqref{e:pre-bstar}, thus obtaining 
\begin{equation}\label{e:second-of-first}
\begin{split}
V(s)x &= 
-B^*{A^*}^\epsilon\int_s^T \Big[e^{A^*(r-s)} 
{A^*}^{-\epsilon}[B^*P_1(r)]^* V(r)e^{A(r-s)} 
\\[1mm]
& \myspace + e^{A^*(r-s)} {A^*}^{-\epsilon}V(r)^* B^*P(r)e^{A(r-s)}\Big]x\,dr\,.
\end{split}
\end{equation}
It is here where iiic) of Assumptions~\ref{a:ipo_1}, that is
\begin{equation*}
\exists \, q\in (1,2)\,, \; C=C(T)>0\colon \quad 
\|B^*e^{A^*(\cdot -s)}{A^*}^\epsilon x\|_{L^q(s,T;U)}\le C\,\|x\|_Y \qquad \forall x\in Y\,,
\end{equation*}
becomes crucially important: indeed, it yields as well
\begin{equation*}
\|[B^*e^{A^*(\cdot -s)}{A^*}^\epsilon]^*g(\cdot)\|_Y \le C\, \|g\|_{L^{q'}(s,T;U)}
\end{equation*}
($q'$ denotes the conjugate exponent of $q$), 
so that in particular
\begin{equation} \label{e:dual-of-tricky}
\|[B^*e^{A^*(\cdot -s)}{A^*}^\epsilon]^*w\|_Y \le C \,(T-s)^{1/q'} \|w\|_U \qquad \forall w\in U\,.
\end{equation}
We return to \eqref{e:second-of-first}, and highlight a few blocks within its right hand side,
as follows:
\begin{equation*}
\begin{split}
V(s)x & = -\int_s^T \big[B^*e^{A^*(r-s)}{A^*}^\epsilon\big]\,
{A^*}^{-\epsilon}[B^*P_1(r)]^* V(r)e^{A(r-s)}x\,dr 
\\[1mm]
& \myspace - \int_s^T\big [B^*e^{A^*(r-s)}{A^*}^\epsilon\big]
{A^*}^{-\epsilon}V(r)^* B^*P(r)e^{A(r-s)}x\,dr\,;
\end{split}
\end{equation*}
multiply next both members by $w\in U$, to find
\begin{equation*}
\begin{split} 
(V(s)x,w)_U & =-\int_s^T \big(V(r)e^{A(r-s)}x,[B^*P_1(r)A^{-\epsilon}]\, 
\big[B^*e^{A^*(r-s)}{A^*}^\epsilon \big]^*w\big)_U\,dr 
\\[1mm]
& \qquad - \int_s^T \big(B^*P(r)e^{A(r-s)}x, [V(r)A^{-\epsilon}]\,
\big[B^*e^{A^*(r-s)}{A^*}^\epsilon\big]^*w\big)_U\,dr\,.
\end{split}
\end{equation*}
We now proceed to estimate either summand in the right hand side, making use of \eqref{e:dual-of-tricky};
this leads to
\begin{equation*}
\begin{split} 
& |(V(s)x,w)_U| 
\\
& \;
\le M \, \|V(\cdot)\|_{\cL(\cD(A^\epsilon),C([s,T];U))} \|x\|_{\cD(A^\epsilon)} 
\|B^*P_1(\cdot)\|_{\cL(\cD(A^\epsilon),C([s,T];U))} 
\|w\|_U (T-s)^{1/q'}
\\
& \quad + M\,\|B^*P(\cdot)\|_{\cL(\cD(A^\epsilon),C([s,T];U))} \|x\|_{\cD(A^\epsilon)}
\|V(\cdot)\|_{\cL(\cD(A^\epsilon),C([s,T];U))}
\|w\|_U (T-s)^{1/q'}\,.
\end{split}
\end{equation*}
Therefore, there exists a positive constant $C$ (depending on $P$ and $P_1$) such that
\begin{equation*}
\big|(V(s)x,w)_U\big| \le C\,\|V(\cdot)\|_{\cL(\cD(A^\epsilon),C([s,T];U))}(T-s)^{1/q'}\|w\|_U \|x\|_{\cD(A^\epsilon)}\,,
\end{equation*}
which establishes
\begin{equation}\label{e:key-estimate}
\|V(s)\|_{\cL(\cD(A^\epsilon),U)} \le C\,\|V(\cdot)\|_{\cL(\cD(A^\epsilon),C([s,T];U))}(T-s)^{1/q'}\,,
\end{equation}
for any $s\in [0,T)$.

The argument is now pretty standard: set $s_0$ such that $(T-s_0)^{1/q'}< 1/C$;
since the estimate \eqref{e:key-estimate} holds true in particular for any $s\in [s_0,T)$,
we have 
\begin{equation*}
\|V(\cdot)\|_{\cL(\cD(A^\epsilon),C([s_0,T];U))} \le C\,(T-s_0)^{1/q'}\,
\|V(\cdot)\|_{\cL(\cD(A^\epsilon),C([s_0,T];U))}
\end{equation*}
which is impossible unless $V(\cdot)\equiv 0$ on $[s_0,T]$. 
Iterating the same argument, in a finite number of steps we obtain $V(s)\equiv 0$ on $[0,T]$. 
This in turn implies, by \eqref{e:first-in-first}, 
\begin{equation*}
(Q(s)x,y)_Y =0 \qquad\forall s\in [0,T]\,, \; \forall x,y\in \cD(A^\epsilon)\,;
\end{equation*}
by density we obtain ($Q(s)x=0$ for any $x\in Y$ first, and then) 
$Q(\cdot)\equiv 0$, that is $P_1(\cdot) \equiv P(\cdot)$, as desired.  

\qed

\subsection{Infinite time interval. Preparatory material} 
We turn now our attention to the optimal control problem \eqref{e:state-eq}--\eqref{e:cost},
with $T=+\infty$.
In order to establish a uniqueness result for the corresponding algebraic Riccati equation \eqref{e:ARE}, 
we will employ a different method of proof than the one utilized in the previous subsection for the DRE.
Still, an integral form of the ARE will prove more effective (than its algebraic form)
to accomplish this goal, just like the integral forms of the DRE in Lemma~\ref{l:DtoI} provide fundamental tools for both proofs of Theorem~\ref{t:dre-uniqueness}.
This is the reason why we derive the said integral form of the ARE here.
The following Lemma contributes to the preparatory material
for the forthcoming analysis in Section~\ref{s:uniqueness_2}.
Its proof is not difficult, yet it is explicitly given for the reader's convenience.
 
\begin{lemma}[Integral form of the ARE] \label{l:are-integral} 
Let $\cQ$ be the class defined in \eqref{e:classeQ}, and let $P_1\in \cQ$ be a solution to the
algebraic Riccati equation \eqref{e:ARE}. 
Then, $P_1$ solves the following integral form of the ARE valid for all $x,y\in \cD(A^\epsilon)$: 
\begin{equation} \label{e:are-integral}
\begin{split}
& \hspace{-2mm} \big(P_1e^{A(t-s)}x,e^{A(t-s)}y\big)_Y 
= (P_1 x,y)_Y 
+ \int_s^t \big(B^*P_1e^{A(r-s)}x,B^*P_1e^{A(r-s)}y\big)_U\,dr
\\[1mm]
& \myspace
-\int_s^t \big(Re^{A(r-s)}x,Re^{A(r-s)}y\big)_Z\,dr\,, 
\end{split}
\end{equation}
with $0\le s\le t$. 

\end{lemma}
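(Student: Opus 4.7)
The plan is to derive \eqref{e:are-integral} by differentiating the bilinear form $r \mapsto (P_1 e^{A(r-s)}x, e^{A(r-s)}y)_Y$ along the semigroup orbit, substituting the derivative pointwise via the ARE \eqref{e:ARE}, integrating in $r$ from $s$ to $t$, and finally extending the resulting identity from $\cD(A)$ to $\cD(A^\epsilon)$ by density. The structure parallels step \textbf{1.} of Lemma~\ref{l:DtoI}, with the time-independent operator $P_1$ playing the role of $Q(r)$; the absence of a $\dot Q(r)$ contribution actually makes the derivation slightly shorter.

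First, I would fix $x, y \in \cD(A)$. Since the $C_0$-semigroup preserves $\cD(A)$ and the orbits $r \mapsto e^{A(r-s)}x$, $e^{A(r-s)}y$ are of class $C^1$ in $Y$, the Leibniz rule together with self-adjointness $P_1 = P_1^*$ gives
\begin{equation*}
\frac{d}{dr}(P_1 e^{A(r-s)}x, e^{A(r-s)}y)_Y
= (A e^{A(r-s)}x, P_1 e^{A(r-s)}y)_Y + (P_1 e^{A(r-s)}x, A e^{A(r-s)}y)_Y.
\end{equation*}
Because $e^{A(r-s)}x$ and $e^{A(r-s)}y$ belong to $\cD(A)$, the ARE \eqref{e:ARE} applies to this pair and rewrites the right-hand side as $(B^* P_1 e^{A(r-s)}x, B^* P_1 e^{A(r-s)}y)_U - (R e^{A(r-s)}x, R e^{A(r-s)}y)_Z$. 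Integrating in $r \in [s,t]$ and rearranging yields \eqref{e:are-integral} for $x, y \in \cD(A)$.

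Second, I would extend to $x, y \in \cD(A^\epsilon)$ by choosing sequences $x_n, y_n \in \cD(A)$ converging to $x, y$ in $\cD(A^\epsilon)$. The two boundary terms converge immediately because $P_1$ and $e^{A(t-s)}$ are bounded on $Y$. For the first integrand, since $B^* P_1 \in \cL(\cD(A^\epsilon), U)$ by the very definition of $\cQ$ and $e^{A(r-s)}$ commutes with $A^\epsilon$ on $\cD(A^\epsilon)$, one obtains an estimate of the form
\begin{equation*}
\|B^*P_1 e^{A(r-s)}(x_n - x)\|_U \le C\,\|x_n-x\|_{\cD(A^\epsilon)},
\end{equation*}
uniform for $r \in [s,t]$; for the second integrand, boundedness of $R$ on $Y$ and strong continuity of $e^{A(r-s)}$ are enough. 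Dominated convergence then allows passing to the limit inside both integrals.

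The one step that really requires care is this last density argument: one needs $B^*P_1 e^{A(r-s)}$ to be controlled uniformly in $r \in [s,t]$ by the $\cD(A^\epsilon)$-norm of its input, and this is precisely what the class $\cQ$ has been tailored to provide. The derivation itself is routine once one observes that the bilinear structure of \eqref{e:ARE} is exactly what is produced upon differentiating $(P_1 e^{A(r-s)}x, e^{A(r-s)}y)_Y$ along a pair of semigroup trajectories.
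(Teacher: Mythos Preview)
Your proposal is correct and follows essentially the same route as the paper's proof: differentiate $(P_1 e^{A(r-s)}x,e^{A(r-s)}y)_Y$ for $x,y\in\cD(A)$, use the ARE to identify the derivative, integrate over $[s,t]$, and then extend to $\cD(A^\epsilon)$ by density using $P_1\in\cQ$. You are simply more explicit than the paper about the uniform-in-$r$ bound needed in the density step.
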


\begin{proof}
Let $P_1\in \cQ$ be a solution to the ARE \eqref{e:ARE}, that we record for the reader's convenience:
\begin{equation*}
(P_1x,Ay)_Y+(Ax,P_1y)_Y-(B^*P_1x,B^*P_1y)_U+(Rx,Ry)_Z=0\,, \quad x, y\in \cD(A)\,.  
\end{equation*}
With $e^{A(t-s)}x$, $e^{A(t-s)}y\in \cD(A)$ in place of $x, y$, and with $0\le s\le t$, the
equation becomes
\begin{equation*}
\begin{split}
& (P_1e^{A(t-s)}x,Ae^{A(t-s)}y)_Y+(Ae^{A(t-s)}x,P_1e^{A(t-s)}y)_Y
\\[1mm]
& \qquad
-(B^*P_1e^{A(t-s)}x,B^*P_1e^{A(t-s)}y)_U +(Re^{A(t-s)}x,Re^{A(t-s)}y)_Z=0\,,
\end{split}
\end{equation*}
that is nothing but
\begin{equation}\label{e:pre-integral}
\begin{split}
&\frac{d}{dt}\big(P_1e^{A(t-s)}x,e^{A(t-s)}y\big)_Y 
= \big(B^*P_1e^{A(t-s)}x,B^*P_1e^{A(t-s)}y\big)_U
\\[1mm]
& \myspace\myspace\quad -\big(Re^{A(t-s)}x,Re^{A(t-s)}y\big)_Z\,,
\quad x, y\in \cD(A)\,.
\end{split}
\end{equation} 
Integrating both sides of \eqref{e:pre-integral} between $s$ and $t$ we attain \eqref{e:are-integral},
initially for any $x,y\in \cD(A)$.
Its validity is then extended to all $x,y\in \cD(A^\epsilon)$ by density, since $P_1\in \cQ$. 
\end{proof}

\medskip
While the integral form \eqref{e:are-integral} of the ARE will constitute the starting point
for the proof of Theorem~\ref{t:are-uniqueness}, it is important to emphasize
the central role of the distinguishing (and improved) regularity properties of the operator 
$B^*e^{A^*\cdot}{A^*}^\epsilon$.
We refer the reader to Appendix~\ref{a:appendix}, where we collected and highlighted several instrumental results, with the aim of displaying their statements in a clear sequence and framework.
See, more specifically, Proposition~\ref{p:2013-P3.2} therein.


\section{A unified method of proof of uniqueness for both DRE and ARE}
\label{s:uniqueness_2}
In this Section we provide a second proof of Theorem~\ref{t:dre-uniqueness} and then
show Theorem~\ref{t:are-uniqueness}, thereby settling the question of uniqueness for the differential and algebraic Riccati equations corresponding to the optimal control problem \eqref{e:mild}--\eqref{e:cost}.
We recall from Section~\ref{ss:insight} that the a crucial intermediate step to achieve either goal is an identity which is classical in control theory.


\subsection{Finite time interval, differential Riccati equations}
In this subsection we focus on the optimal control problem \eqref{e:state-eq}--\eqref{e:cost},
with $T<+\infty$, along with the corresponding Riccati equation.
In approaching the second proof of Theorem~\ref{t:dre-uniqueness}, we start by showing the
above-mentioned fundamental identity.
Despite being a standard element in classical optimal control theory, the identity should not be taken for granted in the absence of evident beneficial regularity properties 
of the kernel $e^{At}B$ -- such as analiticity of the semigroup or more generally singular estimates.
Achieving the said equality requires that the Assumptions~\ref{a:ipo_1} are fully exploited.
The delicate, careful computations are carried out in the following Lemma.


\begin{lemma}[Fundamental identity] \label{l:id-fondamentale}
Let $Q\in \cQ_T$ be a solution to the integral Riccati equation \eqref{e:ire}.
With $u\in L^2(s,T;U)$ and $x\in \cD(A^\epsilon)$, let $y(\cdot)$ be the semigroup solution to the state equation in \eqref{e:state-eq} corresponding to $u(\cdot)$, with $y(s)=x$, that is
\begin{equation*}
y(t) = e^{A(t-s)}x + \int_s^t e^{A(t-r)} Bu(r)\,dr=e^{A(t-s)}x + L_su(t)\,, \quad t\in[s,T]\,.
\end{equation*}
Then, the following identity is valid: for $t\in[s,T]$
\begin{equation}\label{e:fundamental}
\begin{split}
& (Q(t)y(t),y(t))_Y - (Q(s)x,x)_Y = -\int_s^t \left[\|Ry(r)\|_Z^2 + \|u(r)\|_U^2\right]\,dr 
\\[1mm]
& \myspace + \int_s^t \|u(r)+B^*Q(r)y(r)\|_U^2 \,dr\,.
\end{split}
\end{equation}

\end{lemma}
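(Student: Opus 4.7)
The identity is a variation-of-constants / completion-of-the-square of classical LQ type. The strategy is to derive it first at the differential level and then integrate from $s$ to $t$. Formally, differentiating $r\mapsto (Q(r)y(r),y(r))_Y$ along the state trajectory $y'=Ay+Bu$ gives
\[
\frac{d}{dr}(Q(r)y(r),y(r))_Y = \bigl\langle \dot Q(r)y,y\bigr\rangle + (Q(Ay+Bu),y)_Y + (Qy,Ay+Bu)_Y.
\]
Substituting for $\langle \dot Q(r) y,y\rangle$ from \eqref{e:DRE} (the differential form of the Riccati equation satisfied by $Q$) cancels the terms in $Ay$ and yields
\[
\frac{d}{dr}(Q(r)y,y)_Y = -\|Ry\|_Z^2 + \|B^*Q(r)y\|_U^2 + 2\,\mathrm{Re}\,(u,B^*Q(r)y)_U;
\]
the completion of the square $\|B^*Qy\|_U^2 + 2\,\mathrm{Re}\,(u,B^*Qy)_U = \|u+B^*Qy\|_U^2 - \|u\|_U^2$ followed by integration from $s$ to $t$ yields \eqref{e:fundamental}.

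To execute this rigorously I would proceed by density. First, under regularizing assumptions on $x$ and $u$ ensuring that $y(\cdot)\in C^1([s,T];Y)\cap C([s,T];\cD(A))$, in particular $y(r)\in \cD(A^\epsilon)$ at each $r$ and $B^*Q(r)y(r)\in U$ is pointwise meaningful by the very definition of $\cQ_T$; then the formal calculation above becomes rigorous and the DRE applies. A cleaner alternative bypasses the DRE altogether by taking the integral identity \eqref{e:ire_bis} as the starting point, splitting $y(t) = e^{A(t-s)}x + L_s u(t)$, and manipulating the convolution cross-terms through an interchange of the order of integration. Either way, \eqref{e:fundamental} is obtained for smooth data.

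Second, I would pass to the limit, approximating a general $x\in \cD(A^\epsilon)$ in the $\cD(A^\epsilon)$-norm and $u\in L^2(s,T;U)$ in $L^2$ by admissible smooth data. Continuity of $(x,u)\mapsto y(\cdot)$ from $\cD(A^\epsilon)\times L^2(s,T;U)$ into $C([s,T];Y)$, combined with $Q\in C([0,T];\cL(Y))$, transfers the identity through the $\|Ry\|_Z^2$, $\|u\|_U^2$, and boundary terms at once. The delicate piece, which I expect to be the main obstacle, is the integral $\int_s^t\|u+B^*Q(r)y(r)\|_U^2\,dr$: it requires $B^*Q(\cdot)y(\cdot)\in L^2(s,T;U)$ with continuous dependence on $(x,u)$, and, since $e^{At}$ is neither analytic nor enjoys a full singular estimate for $e^{At}B$, the convolution state $(L_s u)(r)$ does not generally lie in $\cD(A^\epsilon)$, so that the pointwise meaning of $B^*Q(r)(L_s u)(r)$ is not available. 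The resolution is to treat $B^*Q(\cdot)y(\cdot)$ as a genuine $L^2$-in-time object: for the homogeneous part $B^*Q(\cdot)e^{A(\cdot-s)}x$ one invokes Lemma~\ref{l:DtoI}(2) directly, while for the convolution part one writes
\[
B^*Q(r)(L_s u)(r) \;=\; \int_s^r \bigl[B^*Q(r)\,e^{A(r-\sigma)}\bigr] Bu(\sigma)\,d\sigma,
\]
interchanges the orders of integration, and estimates via Lemma~\ref{l:DtoI}(2) combined with an $L^q$--$L^{q'}$ duality bound issuing from iiic) of Assumptions~\ref{a:ipo_1}, in the spirit of the chain of estimates leading to \eqref{e:key-estimate} in the first proof of Theorem~\ref{t:dre-uniqueness}.
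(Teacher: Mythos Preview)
Your ``cleaner alternative''---starting from the integral Riccati equation \eqref{e:ire_bis} and splitting $y=e^{A(\cdot-s)}x+L_su$---is exactly the route the paper takes. The paper never differentiates $(Q(r)y(r),y(r))$; instead it expands the right-hand side of \eqref{e:fundamental} under the splitting, labels the resulting eight pieces $R_1,R_2,R_3,C_1,\dots,C_5$, and regroups them so that three successive applications of the IRE (in both forms \eqref{e:ire} and \eqref{e:ire_bis}) collapse everything into the left-hand side. The manipulation is lengthy---it involves symmetrizing a double integral and two changes of order of integration---but mechanical.

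Where your proposal diverges is in the handling of $B^*Q(\cdot)L_su(\cdot)$. You correctly flag this as the crux, but your proposed resolution---pulling $B^*Q(r)$ inside the convolution integral to write $\int_s^r[B^*Q(r)e^{A(r-\sigma)}]Bu(\sigma)\,d\sigma$ and then estimating via Lemma~\ref{l:DtoI}(2) and duality---does not work as stated: the composition $B^*Q(r)e^{A(r-\sigma)}B$ is ill-defined, since $e^{A(r-\sigma)}Bw$ lives in $[\cD(A^*)]'$ while $B^*Q(r)$ acts only on $\cD(A^\epsilon)$. The paper's resolution is instead to invoke Lemma~\ref{l:improved} (itself a consequence of iiic) of Assumptions~\ref{a:ipo_1}): for $u\in L^{q'}(s,T;U)$, and in particular for $u\in L^\infty(s,T;U)$, one has $L_su\in C([s,T];\cD(A^\epsilon))$ outright. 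Combined with Lemma~\ref{l:gain-on-functions}, this makes $B^*Q(\cdot)L_su(\cdot)\in C([s,T];U)$ pointwise meaningful, and the whole computation proceeds with honest inner products in $Y$ and $U$ (plus the duality $[\cD(A^\epsilon)]'$--$\cD(A^\epsilon)$ where $Q(r)B$ appears). The passage from $u\in L^\infty$ to $u\in L^2$ by density is deferred to the very last line, once the identity is in hand.

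Your first route, via classical solutions $y\in C^1([s,T];Y)\cap C([s,T];\cD(A))$, is not realistic here: with $B\colon U\to[\cD(A^*)]'$ genuinely unbounded, producing such $y$ from the mild formula would require $Bu(\cdot)$ to take values in $Y$, which is generically false in this setting.
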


\begin{proof}
Assume initially that $u\in L^\infty (s,T;U)$. 
We examine the right hand side of the identity \eqref{e:fundamental}.
For the first term we have
\begin{align*}
-\int_s^t \|Ry(r)\|_Z^2\,dr &= -\int_s^t \big\|Re^{A(r-s)}x\big\|_Z^2\,dr 
-\int_s^t \big\|RL_su(r)\big\|_Z^2\,dr
\\[1mm]
& \quad - 2 \textrm{Re}\int_s^t \big(Re^{A(r-s)}x,RL_su(r)\big)_Z\,dr =: \sum_{j=1}^3 R_j\,.
\end{align*}
We note that each summand $R_j$ makes sense, just considering the space regularity originally singled out in \cite{abl_2005} and here recalled in Proposition~\ref{p:2005-B3}; more specifically, $u\in L^\infty (s,T;U)$ implies $L_s u\in C([s,T];Y)$ by its fourth assertion.
We consider next the remainder 
\begin{equation*}
-\int_s^t \|u(r)\|_U^2\,dr + \int_s^t \|u(r)+B^*Q(r)y(r)\|_U^2 \,dr\,.
\end{equation*}
Computing the square in the second integral, discarding additive inverses and replacing
again the expression of $y(r)$, we get
\begin{align*}
& -\int_s^t \|u(r)\|_U^2\,dr + \int_s^t \|u(r)+B^*Q(r)y(r)\|_U^2 \,dr 
\\[1mm]
& \quad
= 2 \textrm{Re} \int_s^t \big(B^*Q(r)e^{A(r-s)}x,u(r)\big)_U\,dr  
+ 2 \textrm{Re}\int_s^t \big(B^*Q(r)L_s u(r),u(r)\big)_U\,dr
\\[1mm]
& \qquad 
+\int_s^t \big\|B^*Q(r)e^{A(r-s)}x\big\|_U^2\,dr 
+  2 \textrm{Re} \int_s^t (B^*Q(r)e^{A(r-s)}x,B^*Q(r)L_su(r)\big)_U\,dr
\\[1mm]
& \qquad 
+\int_s^t \big\|B^*Q(r)L_su(r)\big\|_U^2\,dr =: \sum_{j=1}^5 C_j\,.
\end{align*}
That each summand $C_j$ makes sense as well is justified by the following observations:
$B^*Q(\cdot)e^{A(\cdot-s)}x\in L^2(s,T;U)$
because of item 2. of Lemma~\ref{l:DtoI};
in addition, since $L^\infty (s,T;U)\subset L^{q'}(s,T;U)$, Lemma~\ref{l:improved} yields the improved regularity $L_s u\in C([s,T];\cD(A^\epsilon))$, which in turn implies 
$B^*Q(\cdot)L_su(\cdot)\in C([s,T];U)$, as shown in Lemma~\ref{l:gain-on-functions}. 

By using the original form \eqref{e:ire} of the integral Riccati equation (IRE), with $x=y$, we find that
\begin{equation}\label{e:prima}
\begin{split}
R_1+C_3& = -\int_s^t \|Re^{A(r-s)}x\|_Z^2\,dr + \int_s^t \big\|B^*Q(r)e^{A(r-s)}x\big\|_U^2\,dr 
\\[1mm]
& =\big(Q(t)e^{A(t-s)}x,e^{A(t-s)}x\big)_Y - (Q(s)x,x)_Y\,.
\end{split}
\end{equation}
Next,
\begin{equation*}
\begin{split}
R_3+C_4+C_1 &= -2 \textrm{Re}\int_s^t \big(Re^{A(r-s)}x,RL_su(r)\big)_Z\,dr
\\[1mm]
& \hspace{-1cm}\quad + 2 \textrm{Re}\int_s^t \big(B^*Q(r)e^{A(r-s)}x,B^*Q(r)L_su(r)\big)_U\,dr 
\\[1mm]
& \hspace{-1cm}\quad + 2 \textrm{Re}\int_s^t \big(B^*Q(r)e^{A(r-s)}x,u(r)\big)_U\,dr
\\[1mm]
& \hspace{-1cm}= -2 \textrm{Re}\int_s^t \Big(R^*Re^{A(r-s)}x,\int_s^r e^{A(r-\sigma)}Bu(\sigma)\,d\sigma\Big)_Y\,dr
\\[1mm]
& \hspace{-1cm}\quad + 2 \textrm{Re} \int_s^t 
\Big\langle Q(r)BB^*Q(r)e^{A(r-s)}x,\int_s^r e^{A(r-\sigma)}Bu(\sigma)\,
d\sigma\Big\rangle_{[\cD(A^\epsilon)]',\cD(A^\epsilon)}\,dr
\\[1mm]
& \hspace{-1cm}\quad + 2 \textrm{Re} \int_s^t \big(B^*Q(r)e^{A(r-s)}x,u(r)\big)_U\,dr\,,
\end{split}
\end{equation*}
where the duality in the penultimate term is 
based on the membership $Q(\cdot)\in \cQ_T$ (along with the estimate \eqref{e:regB+Q})
which yields $Q(r)B\in \cL(U,[\cD(A^\epsilon)]')$, 
combined as before with $L_s u\in C([s,T];\cD(A^\epsilon))$.
The above leads to
\begin{equation*}
\begin{split}
R_3+C_4+C_1 &=
- 2 \textrm{Re}\int_s^t \int_s^r\big(B^*e^{A^*(r-\sigma)}R^*Re^{A(r-s)}x,u(\sigma)\big)_U
\,d\sigma\,dr
\\[1mm]
& \quad +\, 2\textrm{Re} \int_s^t \int_s^r\big(B^*e^{A^*(r-\sigma)}Q(r)BB^*Q(r)e^{A(r-s)}x,
u(\sigma)\big)_U\,d\sigma dr
\\[1mm]
& \quad 
+ 2 \textrm{Re} \int_s^t (B^*Q(\sigma)e^{A(\sigma-s)}x,u(\sigma)\big)_U\,d\sigma\,, 
\end{split}
\end{equation*}
which can be rewritten, exchanging the order of integration, as follows:
\begin{equation}\label{e:seconda}
\begin{split}
R_3+C_4+C_1 =&
- 2 \textrm{Re}\int_s^t \Big(B^*\Big\{\int_\sigma^t e^{A^*(r-\sigma)}R^*R \big[e^{A(r-s)}x\big]
\,dr
\\[1mm]
& \qquad\quad -\int_\sigma^t e^{A^*(r-\sigma)}Q(r)BB^*Q(r)e^{A(r-\sigma)}\big[e^{A(\sigma-s)}x\big]\,dr
\\[1mm]
& \qquad\quad -Q(\sigma)\big[e^{A(\sigma-s)}x\big]\Big\},u(\sigma)\Big)_U\,d\sigma\,. 
\end{split}
\end{equation}

Let us focus on the expression inside the curly bracket.
Because $Q(\cdot)$ solves the IRE \eqref{e:ire}, as well as its second form \eqref{e:ire_bis} valid for any pair $x,y\in Y$, then the following identity -- a {\em strong} form of the IRE, when $Q(\cdot)$ is unknown -- holds true:
\begin{equation*}
\begin{split}
& e^{A^*(t-\sigma)}Q(t)e^{A(t-\sigma)}z= Q(\sigma)z-\int_\sigma^t e^{A^*(r-\sigma)}R^*R z\,dr
\\[1mm]
& \myspace+\int_\sigma^t e^{A^*(r-\sigma)}Q(r)BB^*Q(r)e^{A(r-\sigma)}z\,dr\,, \quad 0\le \sigma\le t\le T\,,
\; z\in Y\,.
\end{split}
\end{equation*}
Thus, returning to \eqref{e:seconda} with this information and setting in particular 
$z=e^{A(\sigma-s)}x$,
we find that $R_3+C_4+C_1$ simply reads as follows:
\begin{equation}\label{e:seconda-bis}
\begin{split}
R_3+C_4+C_1 
&= -2 \textrm{Re}\int_s^t \big(B^*e^{A^*(t-\sigma)}Q(t)e^{A(t-s)}x,u(\sigma)\big)_U\,d\sigma
\\[1mm]
&= -2 \textrm{Re}\big(Q(t)e^{A(t-s)}x,L_su(t)\big)_Y\,.
\end{split}
\end{equation}

We examine next the sum 
\begin{equation*}
R_2+C_5 =-\int_s^t \|RL_s u(r)\|_Z^2\,dr + \int_s^t \big\|B^*Q(r)L_su(r)\big\|_U^2\,dr\,,
\end{equation*}
where, again, since $u\in L^\infty(s,T;U)\subset L^{q'}(s,T;U)$, 
we know from Lemma~\ref{l:improved} that $L_su\in C([s,T];\cD(A^\epsilon))$.
Consequently, one gets
\begin{equation} \label{e:pre-quarta}
\begin{split}
& R_2+C_5 = - \int_s^t \big\langle \big[R^*R-Q(r)BB^*Q(r)\big]L_s u(r),
L_s u(r)\big\rangle_{[\cD(A^\epsilon)]',\cD(A^\epsilon)} \,dr
\\[1mm]
& \; = - \textrm{Re}\int_s^t \Big({A^*}^{-\epsilon}
\big[R^*R-Q(r)BB^*Q(r)\big]A^{-\epsilon}\,
\int_s^r A^\epsilon e^{A(r-\lambda)} Bu(\lambda)\,d\lambda,
\\[0.5mm] 
& \myspace\myspace\myspace\int_s^r A^\epsilon e^{A(r-\mu)} Bu(\mu)\,d\mu\Big)_Y\,dr\,.
\end{split}
\end{equation}
It is important to emphasize that in going from the duality to the inner product in \eqref{e:pre-quarta},
two facts have been crucially used, besides $Q(\cdot)\in \cQ_T$: 
the hypothesis \eqref{e:keyasR} on the observation operator $R$ (that is iiib) of the Assumptions~\ref{a:ipo_1}), and once again, Lemma~\ref{l:improved}. 
Further handling of the right hand side of \eqref{e:pre-quarta} leads to the triple integral
\begin{equation*} 
R_2+C_5 = - \textrm{Re}\int_s^t I(r,s) \,dr\,,
\end{equation*}
having set
\begin{equation*}
I(r,s)=\int_s^r \!\!\int_s^r 
\big(B^*e^{A^*(r-\mu)}\,\big[R^*R-Q(r)BB^*Q(r)\big] e^{A(r-\lambda)} B u(\lambda),u(\mu)\big)_U\,
d\lambda\,d\mu\,.
\end{equation*}

Let us focus on the inner double integral $I(r,s)$. 
We note that this integral pertains to a symmetric function of $(\lambda,\mu)$,
and hence the integral over the square $[s,r] \times [s,r]$ can be replaced by twice the integral
over the triangle $$\{(\lambda,\mu)\colon s \le \mu \le \lambda \le r\}\,.$$
It follows that 
\begin{equation*} 
\begin{split}
& I(r,s) = 2\,\int_s^r \!\! d\lambda \int_s^\lambda \!\! d\mu\,
\big(B^*e^{A^*(r-\mu)}\,\big[R^*R-Q(r)BB^*Q(r)\big] e^{A(r-\lambda)} B u(\lambda),u(\mu)\big)_U
\\[1mm]
& \, = 2\,\int_s^r \!\! \Big[\int_s^\lambda \!\!
\big(B^*e^{A^*(\lambda-\mu)}\,e^{A^*(r-\lambda)}\,\big[R^*R-Q(r)BB^*Q(r)\big] e^{A(r-\lambda)} B u(\lambda),u(\mu)\big)_U\, d\mu\,\Big]\,d\lambda 
\\[1mm]
& \, = 2\,\int_s^r \!\! 
\big(\,e^{A^*(r-\lambda)}\,\big[R^*R-Q(r)BB^*Q(r)\big] e^{A(r-\lambda)} B u(\lambda),\int_s^\lambda e^{A(\lambda-\mu)}Bu(\mu)\,d\mu\big)_Y\, d\lambda
\\[1mm]
& \,= 2\,\int_s^r \!\! 
\big(\,e^{A^*(r-\lambda)}\,\big[R^*R-Q(r)BB^*Q(r)\big] e^{A(r-\lambda)} B u(\lambda),L_su(\lambda)\big)_Y\,d\lambda\,.
\end{split}
\end{equation*}
Inserting the expression of $I(r,s)$ obtained above in the outer integral yields
\begin{equation*} 
\begin{split}
& R_2+C_5 
\\[1mm]
& \; = - 2\textrm{Re}\int_s^t \!\!\int_s^r 
\big(\,e^{A^*(r-\lambda)}\,\big[R^*R-Q(r)BB^*Q(r)\big] e^{A(r-\lambda)} B u(\lambda),L_su(\lambda)\big)_Y
d\lambda\,dr\,;
\end{split}
\end{equation*}
next we exchange the order of integration and also move the first argument of the inner product,
to achieve 
\begin{equation} \label{e:intermedia}
\begin{split}
& R_2+C_5
\\[1mm]
& \; = 
- 2\textrm{Re}\int_s^t \!\!\int_\lambda^t 
\big(\,e^{A^*(r-\lambda)}\,\big[R^*R-Q(r)BB^*Q(r)\big] e^{A(r-\lambda)} B u(\lambda),L_su(\lambda)\big)_Y
dr\,d\lambda
\\[1mm]
& \; = 
- 2\textrm{Re}\int_s^t \!\!\int_\lambda^t 
\big(u(\lambda),B^*e^{A^*(r-\lambda)} \big[R^*R-Q(r)BB^*Q(r)\big]e^{A(r-\lambda)} L_su(\lambda)\big)_Y
dr\,d\lambda
\\[1mm]
& \; = 
- 2\textrm{Re}\int_s^t \big(u(\lambda),
B^*\int_\lambda^t e^{A^*(r-\lambda)}\big[R^*R-Q(r)BB^*Q(r)\big]e^{A(r-\lambda)}\,
L_s u(\lambda)\,dr\big)_U\,d\lambda\,.
\end{split}
\end{equation}

It is apparent that the second form \eqref{e:ire_bis} of the IRE (with $\lambda$ in place of $s$)
-- in fact, a strong form of it --
provides once more the tool, just like in deriving \eqref{e:seconda-bis} from \eqref{e:seconda}.
With $z = L_s u(\lambda)$, replace the integral
\begin{equation*}
\int_\lambda^t e^{A^*(r-\lambda)}\big[R^*R-Q(r)BB^*Q(r)\big]e^{A(r-\lambda)}z\, dr
\end{equation*}
by $[Q(\lambda)- e^{A^*(t-\lambda)} Q(t)e^{A(t-\lambda)}]z$, to find
\begin{equation}\label{e:quinta}
\begin{split}
& R_2+C_5 = 
-2\textrm{Re}\int_s^t\big(u(\lambda),B^*\big[Q(\lambda)- e^{A^*(t-\lambda)} Q(t)e^{A(t-\lambda)}\big]L_su(\lambda)\big)_U\,d\lambda
\\[1mm]
& \qquad = - 2 \textrm{Re} \int_s^t \big(B^*\big[Q(\lambda) - e^{A^*(t-\lambda)}Q(t)e^{A(t-\lambda)}\big] L_s u(\lambda),u(\lambda)\big)_U \,d\lambda\,.
\end{split}
\end{equation}

Thus, adding $C_2$ to \eqref{e:quinta}, we see that a useful simplification occurs, as detailed below:
\begin{equation*}
\begin{split}
R_2+C_5+C_2 & = -2 \textrm{Re} \int_s^t \big(B^*Q(\lambda)L_s u(\lambda),u(\lambda)\big)_U \,d\lambda
\\[1mm]
& \qquad +2 \textrm{Re} \int_s^t \big(B^*e^{A^*(t-\lambda)}Q(t)e^{A(t-\lambda)} L_su(\lambda),u(\lambda)\big)_U \,d\lambda
\\[1mm]
& \qquad
+2 \textrm{Re}\int_s^t \big(B^*Q(r)L_s u(r),u(r)\big)_U\,dr
\\[1mm]
& =2 \textrm{Re} \int_s^t\!\!\int_s^\lambda \big(Q(t)e^{A(t-\sigma)} Bu(\sigma),e^{A(t-\lambda)} Bu(\lambda)\big)_Y \,d\sigma\,d\lambda\,.
\end{split}
\end{equation*}
Owing to the simmetry of the latter integrand in $(\sigma,\lambda)$, we may replace twice the integral over
the triangle $\{(\lambda,\sigma)\colon s \le \lambda\le \sigma \le t\}$ by the integral over the square 
$[s,t] \times [s,t]$, and finally get
\begin{equation}\label{e:sesta}
\begin{split}
R_2+C_5+C_2 
& = \textrm{Re} \int_s^t\!\!\int_s^t \big(Q(t)e^{A(t-\sigma)} Bu(\sigma),e^{A(t-\lambda)} Bu(\lambda)\big)_Y \,d\lambda\,d\sigma
\\[1mm]
& = \textrm{Re} \big(Q(t)L_su(t),L_su(t)\big)_Y=\big(Q(t)L_su(t),L_su(t)\big)_Y\,.
\end{split}
\end{equation}

Combining \eqref{e:sesta} with \eqref{e:prima} and \eqref{e:seconda-bis}, we finally obtain
\begin{equation*}
\begin{split}
\sum_{i=1}^3 R_i+ \sum_{j=1}^5 C_j & 
= \big(Q(t)e^{A(t-s)}x,e^{A(t-s)}x\big)_Y - (Q(s)x,x)_Y
\\[-1mm]
&\qquad + 2 \textrm{Re} (Q(t)e^{A(t-s)}x,L_su(t))_Y + (Q(t)L_su(t),L_su(t))_Y
\\[2mm]
& = (Q(t)y(t),y(t))_Y - (Q(s)x,x)_Y\,,
\end{split}
\end{equation*}
which establishes the fundamental identity \eqref{e:fundamental} in the case $u\in L^\infty(s,T;U)$.
Finally, the identity extends to $u\in L^2(s,T;U)$ by density, which concludes the proof of 
Lemma~\ref{l:id-fondamentale}.

\end{proof}

 
We next introduce an integral equation that involves a given operator solution $Q(t)$ to the Riccati equation corresponding to optimal control problem \eqref{e:state-eq}--\eqref{e:cost}.
Once uniqueness for the DRE \eqref{e:DRE} is established, so that $Q(t)$ must coincide with the Riccati operator $P(t)$, then it will be clear that the said integral equation (\eqref{e:closedloop} below)
is nothing but the well known {\em closed-loop equation}, of central importance for the synthesis of the optimal control.
(This justifies the use of the term ``closed-loop equation'' for \eqref{e:closedloop}).

As we shall see, the following Lemma~\ref{l:closedloop} and (the independent) Lemma~\ref{l:id-fondamentale}
constitute the core elements for the proof of Theorem~\ref{t:dre-uniqueness}. 

\begin{lemma}\label{l:closedloop} 
Let $\epsilon$ be as in iii) of Assumptions~\ref{a:ipo_1}. 
Let $Q\in \cQ_T$, where $\cQ_T$ is the class defined by \eqref{e:class-tfinite}. 
Then, for every $x\in \cD(A^\epsilon)$, the closed loop equation
\begin{equation}\label{e:closedloop}
y(t) = e^{At}x - \int_0^t e^{A(t-s)}BB^*Q(\sigma) y(\sigma)\,d\sigma\,, \qquad t\in [0,T]\,,
\end{equation}
has a unique solution in the space
\begin{equation} \label{e:spcllo}
X=\Big\{y\in C([0,T];\cD(A^\epsilon))\colon 
\sup_{t\in [0,T]} \big(e^{-rt} \|y(t)\|_{\cD(A^\varepsilon)}) < \infty\Big\}
\end{equation}
endowed with the norm
\begin{equation*}
\|y\|_{X,r} = \sup_{t\in [0,T]} e^{-rt} \|y(t)\|_{\cD(A^\epsilon)}\,, \qquad y\in X\,,
\end{equation*}
provided $r>0$ is chosen sufficiently large.
\end{lemma}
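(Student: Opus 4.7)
\textbf{Proof plan for Lemma~\ref{l:closedloop}.}
The plan is to apply the Banach contraction principle to the map $\Gamma\colon X \to X$ defined by
\begin{equation*}
(\Gamma y)(t) := e^{At}x - \int_0^t e^{A(t-\sigma)}BB^*Q(\sigma)y(\sigma)\,d\sigma, \qquad t\in[0,T],
\end{equation*}
working in $X$ with the weighted norm $\|\cdot\|_{X,r}$. Observe first that since $[0,T]$ is compact, the weights $e^{-rt}$ are pinched between two positive constants, so $(X,\|\cdot\|_{X,r})$ is a Banach space (equivalent as a set and topologically to $C([0,T];\cD(A^\epsilon))$), and the role of $r$ will only be to shrink the contraction constant. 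The first term $e^{At}x$ lies in $X$ for any $x\in\cD(A^\epsilon)$ because $e^{At}$ commutes with $A^\epsilon$ and is strongly continuous.

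Next I would argue that $\Gamma$ maps $X$ into itself. Given $y\in X$, the membership $Q\in\cQ_T$ yields
\begin{equation*}
\|B^*Q(\sigma)y(\sigma)\|_U \le \|B^*Q(\cdot)\|_{\cL(\cD(A^\epsilon),C([0,T];U))}\,\|y(\sigma)\|_{\cD(A^\epsilon)},
\end{equation*}
so that $u_y(\cdot):=B^*Q(\cdot)y(\cdot)$ belongs to $C([0,T];U)\subset L^{q'}(0,T;U)$, with $q'$ the conjugate exponent of $q$ appearing in iiic) of the Assumptions~\ref{a:ipo_1}. The improved regularity result (Lemma~\ref{l:improved}) then guarantees $L u_y \in C([0,T];\cD(A^\epsilon))$, which, together with continuity of $t\mapsto e^{At}x$ in $\cD(A^\epsilon)$, puts $\Gamma y$ into $X$.

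The key step is the contraction estimate, which is where iiic) enters decisively. By duality, the continuous extension of $x\mapsto B^*e^{A^*\cdot}{A^*}^\epsilon x$ as an element of $\cL(Y,L^q(0,T;U))$ yields
\begin{equation*}
\Bigl\|A^\epsilon\int_0^t e^{A(t-\sigma)}B f(\sigma)\,d\sigma\Bigr\|_Y \le C\,\|f\|_{L^{q'}(0,t;U)}, \qquad f\in L^{q'}(0,t;U),
\end{equation*}
uniformly in $t\in[0,T]$. Applying this with $f(\sigma)=B^*Q(\sigma)(y_1(\sigma)-y_2(\sigma))$, and using $\|y_i(\sigma)\|_{\cD(A^\epsilon)}\le e^{r\sigma}\|y_i\|_{X,r}$, I obtain
\begin{equation*}
\|(\Gamma y_1-\Gamma y_2)(t)\|_{\cD(A^\epsilon)} \le C'\Bigl(\int_0^t e^{rq'\sigma}\,d\sigma\Bigr)^{1/q'}\|y_1-y_2\|_{X,r} \le \frac{C'}{(rq')^{1/q'}}\,e^{rt}\,\|y_1-y_2\|_{X,r},
\end{equation*}
so that
\begin{equation*}
\|\Gamma y_1-\Gamma y_2\|_{X,r} \le \frac{C'}{(rq')^{1/q'}}\,\|y_1-y_2\|_{X,r}.
\end{equation*}
Choosing $r$ large enough makes the coefficient strictly less than $1$, so $\Gamma$ is a strict contraction on $X$, and the Banach fixed point theorem furnishes a unique $y\in X$ satisfying $y=\Gamma y$, that is, the closed loop equation \eqref{e:closedloop}.

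The main technical obstacle is the contraction step: it relies squarely on iiic) of the Assumptions~\ref{a:ipo_1} (through its adjoint formulation) combined with Lemma~\ref{l:improved}. Without iiic) one would at best have the raw admissibility bound $B^*e^{A^*\cdot}\in\cL(Y,L^2(0,T;U))$, which does not provide the extra smoothing by $A^\epsilon$ that makes the iteration close in $\cD(A^\epsilon)$ rather than in $Y$; it is precisely the $L^{q'}$ dual bound above, valued in $\cD(A^\epsilon)$, that enables both the self-mapping property and the smallness factor $r^{-1/q'}$ needed to contract.
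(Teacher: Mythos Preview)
Your proposal is correct and follows essentially the same approach as the paper: both set up the weighted space $X$, use iiic) of Assumptions~\ref{a:ipo_1} (in adjoint form) to obtain the decisive factor $(rq')^{-1/q'}$, and conclude via a contraction argument. The only cosmetic differences are that the paper phrases the argument as invertibility of $I+LB^*Q(\cdot)$ (Neumann series) rather than as a fixed point of your affine map $\Gamma$, and it invokes Lemma~\ref{l:gain-on-functions} explicitly to justify the continuity of $t\mapsto B^*Q(t)y(t)$ for time-varying $y\in C([0,T];\cD(A^\epsilon))$, a point you use but do not cite.
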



\begin{proof}
With $x\in \cD(A^\epsilon)$, we set $E(t)=e^{At}x$.
By semigroup theory we know that $E(\cdot)\in C([0,T];\cD(A^\epsilon))$; even more,
since $e^{At}$ is exponentially stable, it holds $E(\cdot)\in X$ provided $r$ is sufficiently large.
As the integral equation \eqref{e:closedloop} has the clear form 
\begin{equation*} 
y(t)+\big[LB^*Q(\cdot)y(\cdot)\big](t) = E(t)\,, \quad t\in [0,T]\,,
\end{equation*}
we appeal to a classical argument of functional analysis: we will prove that $LB^*Q(\cdot)$ is a contraction mapping in $X$, having chosen $r$ sufficiently large. 
This will in turn imply that $I+LB^*Q(\cdot)$ is invertible in $X$, thus providing the sought unique solution
to \eqref{e:closedloop}.

For each $y\in X$, $z\in \cD({A^*}^\epsilon)$, $t\in[0,T]$, we have by Lemma~\ref{l:gain-on-functions}
\begin{align*}
& \big|(e^{-rt} LB^*Q(\cdot)y(\cdot),{A^*}^\epsilon z)_Y\big| 
\\[1mm]
& \quad =\Big|\int_0^t e^{-r(t-s)}\,\big(B^*Q(s)e^{-rs}y(s),B^*e^{A^*(t-s)}{A^*}^\epsilon z\big)_U\,ds\Big|
\\[1mm] 
& \quad \le \int_0^t e^{-r(t-s)}\|B^*Q(\cdot)e^{-r\cdot}y(\cdot)\|_{C([0,T];U)}\, 
\big\|B^*e^{(t-s)A^*}{A^*}^\epsilon z\big\|_U\,ds 
\\[1mm]
& \quad \le \|B^*Q(\cdot)\|_{\cL(C([0,T];\cD(A^\epsilon)),C([0,T];U))}\,\|y\|_{X,r} \int_0^t e^{-r\sigma}
\|B^*e^{A^*\sigma}{A^*}^\epsilon z\|_U\,d\sigma
\\[1mm]
& \quad \le \|B^*Q(\cdot)\|_{\cL(C([0,T];\cD(A^\epsilon)),C([0,T];U))}\,\|y\|_{X,r}\, 
\Big[\int_0^t e^{-r\sigma q'}\,d\sigma\Big]^{1/q'}
\|B^*e^{\cdot A^*}{A^*}^\epsilon z\|_{L^q(0,T;U)} 
\\
& \quad \le \|B^*Q(\cdot)\|_{\cL(C([0,T];\cD(A^\epsilon)),C([0,T];U))}\,\frac{1}{(rq')^{1/q'}}\,
\|B^*e^{A^*\cdot }{A^*}^\epsilon\|_{\cL(Y,L^q(0,T;U))}\,
\|z\|_Y\,\|y\|_{X,r}\,. 
\end{align*}
We note that in going from the antepenultimate to the penultimate estimate we used iiic) of the Assumptions~\ref{a:ipo_1}.
Therefore, there exist positive constants $c, c'$ such that
\begin{equation*} 
\big\|e^{-rt}\big[L\,B^*Q(\cdot)y(\cdot)\big](t)\big\|_{\cD(A^\epsilon)}
\le \frac{c}{(rq')^{1/q'}}\,\|y\|_{X,r}
\le \frac{c'}{r^{1/q'}}\,\|y\|_{X,r}\,,
\end{equation*}
so that by taking a sufficiently large $r$ we see that $LB^*Q(\cdot)$ is a contraction mapping 
in $X$.
The conclusion of the Lemma follows. 
\end{proof}

\smallskip
Uniqueness for the DRE is now a consequence of Lemmas~\ref{l:id-fondamentale} and \ref{l:closedloop},
its proof following a somewhat familiar path.  


\smallskip
\noindent
{\em Proof of Theorem~\ref{t:dre-uniqueness}.} 
For the optimal pair $(\hat{y},\hat{u})$ corresponding to the initial state $x\in Y$
it holds
\begin{equation*}
(P(s)x,x)_Y=J(\hat{u})=\int_s^T \Big(\|R\hat{y}(r)\|_Z^2+\|\hat{u}(r)\|_U^2\Big)\,dr\,, \quad 0\le s\le T\,,
\end{equation*}
where $P(\cdot)$ is the Riccati operator defined in \eqref{opRic}, i.e.
\begin{equation*}
P(t)x = \int_t^T e^{A^*(r-t)} R^*R \Phi(r,t)x\, dr\,, \qquad x\in Y\,,
\end{equation*} 
while $\Phi(r,t)$ denotes the evolution operator
\begin{equation*}
\Phi(r,t)x = e^{A(r-t)}x+L_t\hat{u}(r)\,, \qquad r\in [t,T]\,.
\end{equation*}

Let $Q(\cdot)\in \cQ_T$ be another solution to the DRE \eqref{e:DRE}: by Lemma~\ref{l:DtoI} $Q(\cdot)$
solves the IRE \eqref{e:ire} as well; then, with $u\in L^2(s,T;U)$ and $x\in \cD(A^\epsilon)$,
the identity \eqref{e:fundamental} holds true by Lemma~\ref{l:id-fondamentale}.
With $t=T$, since $Q(T)=0$, from \eqref{e:fundamental} we see that
\begin{equation*}
\begin{split}
(Q(s)x,x)_Y &= \int_s^T \left[\|Ry(r)\|_Z^2 + \|u(r)\|_U^2\right]\,dr - \int_s^T \|u(r)+B^*Q(r)y(r)\|_U^2 \,dr 
\\[1mm]
& \le \int_s^T \left[\|Ry(r)\|_Z^2 + \|u(r)\|_U^2\right]\,dr=J(u)\,.
\end{split}
\end{equation*}
In particular, when $u=\hat{u}$, we establish
\begin{equation} \label{e:for-polarization_1}
(Q(s)x,x)_Y\le J(\hat{u})=(P(s)x,x)_Y\qquad 
\forall s\in [0,T]\,, \; \forall x\in \cD(A^\epsilon)\,.
\end{equation}

Conversely, let $y(\cdot)$ be the solution to the closed-loop equation \eqref{e:closedloop} corresponding to $x\in \cD(A^\epsilon)$, guaranteed by Lemma~\ref{l:closedloop}, and 
let $u(\cdot)=-B^*Q(\cdot)y(\cdot)$.
By construction $u\in L^2(s,T;U)$, and the fundamental identity becomes 
\begin{equation*}
(Q(s)x,x)_Y = \int_s^t \Big(\|Ry(r)\|_Z^2 + \|u(r)\|_U^2\Big)\,dr+(Q(t)y(t),y(t))_Y\,,
\end{equation*}
which in turn gives, for $t=T$,
\begin{equation}\label{e:for-polarization_2}
(Q(s)x,x)_Y = J(u)\ge J(\hat{u})= (P(s)x,x)_Y \qquad \forall s\in [0,T]\,, \;\forall x\in \cD(A^\epsilon)\,.
\end{equation}
The inequality \eqref{e:for-polarization_2}, combined with \eqref{e:for-polarization_1}, establishes
-- via the usual polarization (first) and density (next) arguments -- $Q(s)\equiv P(s)$ on $[0,T]$, 
as desired.

\qed


\subsection{Infinite time interval, algebraic Riccati equations}
In this Section we prove our second main result, that is Theorem~\ref{t:are-uniqueness}, which pertains
to uniqueness for the algebraic Riccati equation \eqref{e:ARE}, under the standing Assumptions~\ref{a:ipo_2}.
Instrumental results are the counterparts of Lemmas~\ref{l:id-fondamentale} and \ref{l:closedloop}, along with the integral form \eqref{e:are-integral} of the ARE, already obtained in Section~\ref{s:uniqueness_1};
see Lemma~\ref{l:are-integral} therein. 


The first Lemma is the infinite time horizon version of the fundamental identity established
in Lemma~\ref{l:id-fondamentale}. 


\begin{lemma}[Fundamental identity ($T=+\infty$)]\label{l:id_fond_infty}
Recall the class $\cQ$ defined in \eqref{e:classeQ}.
Let $Q\in \cQ$ be a solution to the integral Riccati equation \eqref{e:are-integral}.
With $u\in L^2_{\textrm{loc}}(0,\infty;U)$ and $x \in \cD(A^\epsilon)$, let $y(\cdot)$ 
be the semigroup solution to the state equation \eqref{e:state-eq} corresponding to $u(\cdot)$,
with initial state $x$, given by \eqref{e:mild}.
Then, the following identity holds true, for any $t\ge 0$:
\begin{equation} \label{idbase} 
\begin{split}
& (Qy(t),y(t))_Y - (Qx,x)_Y  
\\[2mm]
& \qquad =-\int_0^t \Big(\|Ry(s)\|_Z^2 + \|u(s)\|_U^2\Big)\,ds 
+ \int_0^t \big\|u(s)+B^*Qy(s)\big\|_U^2 \,ds\,.
\end{split}
\end{equation}

\end{lemma}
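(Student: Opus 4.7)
The plan is to adapt the proof of Lemma~\ref{l:id-fondamentale} to the infinite horizon setting, exploiting the fact that $Q$ is now time independent and that the identity \eqref{idbase} is local in $t$: both sides depend only on a single fixed $t\ge 0$, so the argument can be carried out entirely on the bounded interval $[0,t]$. In particular, all regularity tools invoked in the finite horizon proof remain available, now with $s=0$ and with the integral form \eqref{e:are-integral} of the ARE playing the role of the IRE \eqref{e:ire}. As in Lemma~\ref{l:id-fondamentale}, I would first establish the identity under the stronger hypothesis $u\in L^\infty(0,t;U)$, then extend to $u\in L^2_{\textrm{loc}}(0,\infty;U)$ by a density argument, since every term in \eqref{idbase} depends continuously on $u$ with respect to the $L^2(0,t;U)$ topology.

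For $u\in L^\infty(0,t;U)$, I would write $y(s) = e^{As}x + L_0 u(s)$ and expand the right-hand side of \eqref{idbase} into the same natural blocks $R_1,R_2,R_3$ and $C_1,\ldots,C_5$ as in the proof of Lemma~\ref{l:id-fondamentale}. Taking $x=y$ in \eqref{e:are-integral} then handles $R_1+C_3$, yielding $(Qe^{At}x,e^{At}x)_Y-(Qx,x)_Y$. For the mixed block $R_3+C_4+C_1$, I would convert the cross terms into dualities against $L_0 u$, exchange the order of integration, and then invoke the strong operator form of \eqref{e:are-integral} -- namely
\begin{equation*}
e^{A^*(t-\sigma)}Q e^{A(t-\sigma)}z = Qz - \int_\sigma^t e^{A^*(r-\sigma)}R^*R e^{A(r-\sigma)}z\,dr + \int_\sigma^t e^{A^*(r-\sigma)}QBB^*Q e^{A(r-\sigma)}z\,dr
\end{equation*}
valid for every $z\in Y$ by density -- with $z=e^{A\sigma}x$, collapsing the triple expression to $-2\,\textrm{Re}(Qe^{At}x,L_0 u(t))_Y$. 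For the block $R_2+C_5+C_2$, I would symmetrize the double integral over $[0,t]\times[0,t]$ via the same Fubini-type manipulation used in the finite horizon proof, apply the strong form once more with $z=L_0 u(\lambda)$, and obtain $(QL_0 u(t), L_0 u(t))_Y$. Summing the three contributions reconstructs $(Qy(t),y(t))_Y-(Qx,x)_Y$, as required.

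The main technical obstacle -- inherited essentially unchanged from the finite horizon proof -- will be justifying each dual pairing, each exchange of integration, and each passage from duality in $[\cD(A^\epsilon)]',\cD(A^\epsilon)$ to an inner product in $Y$, in the absence of singular estimates for $e^{At}B$. This rests on three pillars already available here: the improved regularity $L_0 u \in C([0,t]; \cD(A^\epsilon))$ supplied by Lemma~\ref{l:improved} (whose hypotheses hold on any finite interval under Assumptions~\ref{a:ipo_2}); the membership $Q\in \cQ$, which ensures $B^*Q\in \cL(\cD(A^\epsilon),U)$ and, by duality, $QB\in \cL(U,[\cD(A^\epsilon)]')$; and finally the key smoothing hypothesis $R^*R\in \cL(\cD(A^\epsilon),\cD({A^*}^\epsilon))$ from iiib) of Assumptions~\ref{a:ipo_1}, inherited by Assumptions~\ref{a:ipo_2}. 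Beyond these, no genuinely new analytic difficulty arises with respect to the finite horizon argument.
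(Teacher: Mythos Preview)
Your proposal is correct and follows essentially the same route as the paper: the paper's own proof simply states that one proceeds along the lines of the proof of Lemma~\ref{l:id-fondamentale}, replacing $[s,t]$ by $[0,t]$, assuming initially $u\in L^\infty_{\textrm{loc}}(0,\infty;U)$, and noting the argument is slightly simpler since $Q$ is time-independent. Your outline of the block decomposition, the use of the strong form of \eqref{e:are-integral}, and the three regularity pillars are precisely the ingredients needed, and in fact your write-up is more detailed than what the paper provides.
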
 

\begin{proof} 
It suffices to proceed along the lines of the proof of Lemma~\ref{l:id-fondamentale}, replacing the interval
$[s,t]$ by $[0,t]$ and assuming initially $u\in L^\infty_{\textrm{loc}}(0,\infty;U)$; the proof is 
actually slightly simpler, since here $Q$ is independent of $t$. 
The details are omitted for the sake of conciseness.
\end{proof}

The next Lemma is the infinite time horizon version of Lemma~\ref{l:closedloop}, dealing with
an integral equation which -- once uniquenss for the ARE is ascertained -- will turn out to be
the closed-loop equation.


\begin{lemma}\label{l:closedloopinfty}
Let $\epsilon$ be as in the Assumptions~\ref{a:ipo_2}.
Recall the class $\cQ$ defined by \eqref{e:classeQ}, and let $Q\in \cQ$.
For every $x\in \cD(A^\epsilon)$ and a suitably large $r>0$ there exists a unique solution
$y(\cdot)$ to the closed loop equation
\begin{equation}  \label{e:closed-loop-halfline}
y(t) = e^{At}x - \int_0^t e^{A(t-s)}BB^*Q y(s)\,ds\,, \qquad t>0\,,
\end{equation}
in the space 
\begin{equation} \label{spcllo}
X=\big\{y\in C([0,\infty);\cD(A^\epsilon))\colon \quad 
\sup_{t\ge 0} e^{-rt} \|y(t)\|_{\cD(A^\epsilon)} < \infty\big\}
\end{equation}
endowed with the norm 
\begin{equation*}
\|y\|_{X,r}=\sup_{t>0} e^{-rt}\|y(t)\|_{D(A^\epsilon)} \quad \forall y\in X\,, \quad r>0\,.
\end{equation*}

\end{lemma}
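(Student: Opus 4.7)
The plan is to recast \eqref{e:closed-loop-halfline} as a fixed point problem and apply the Banach contraction theorem in the weighted space $X$, following the template of Lemma~\ref{l:closedloop} but now exploiting the exponential stability of $e^{At}$ and the half--line version of iiic) alluded to by \eqref{e:da-iiic}. Concretely, I would define $E(t):=e^{At}x$ and $(\mathcal{T}y)(t):=\int_0^t e^{A(t-s)}BB^*Qy(s)\,ds$, so that \eqref{e:closed-loop-halfline} reads $y=E-\mathcal{T}y$ in $X$. The inclusion $E\in X$ is free for any $r>0$: since $x\in\cD(A^\epsilon)$ and $e^{At}$ commutes with $A^\epsilon$, \eqref{e:stability} gives $\|E(t)\|_{\cD(A^\epsilon)}\le Me^{-\omega t}\|A^\epsilon x\|_Y$.

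The heart of the matter is to show that $\mathcal{T}$ maps $X$ into itself with a small norm once $r$ is large. I would proceed by duality exactly as in the proof of Lemma~\ref{l:closedloop}: for $y\in X$, $z\in \cD({A^*}^\epsilon)$, and $t\ge 0$,
\begin{equation*}
\bigl(e^{-rt}(\mathcal{T}y)(t),{A^*}^\epsilon z\bigr)_Y
=\int_0^t e^{-r(t-s)}\bigl(B^*Q\,e^{-rs}y(s),\,B^*e^{A^*(t-s)}{A^*}^\epsilon z\bigr)_U\,ds,
\end{equation*}
where the duality reduces to an inner product in $U$ because $B^*Q\in \cL(\cD(A^\epsilon),U)$. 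Bounding $\|B^*Q\,e^{-rs}y(s)\|_U\le c\,\|y\|_{X,r}$ and changing variables $\sigma=t-s$ produces
\begin{equation*}
\bigl|\bigl(e^{-rt}(\mathcal{T}y)(t),{A^*}^\epsilon z\bigr)_Y\bigr|
\le c\,\|y\|_{X,r}\int_0^{\infty}e^{-r\sigma}\bigl\|B^*e^{A^*\sigma}{A^*}^\epsilon z\bigr\|_U\,d\sigma.
\end{equation*}
Hölder's inequality with exponents $q,q'$ then factors this as $(rq')^{-1/q'}$ times $\|B^*e^{A^*\cdot}{A^*}^\epsilon z\|_{L^q(0,\infty;U)}$, and here I would invoke the enhanced regularity \eqref{e:da-iiic}, which propagates iiic) of Assumptions~\ref{a:ipo_1} from $[0,T]$ to the whole half--line by the decay \eqref{e:stability}. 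This bounds the last factor by $C\|z\|_Y$, yielding
\begin{equation*}
\bigl\|e^{-rt}(\mathcal{T}y)(t)\bigr\|_{\cD(A^\epsilon)}\le \frac{c'}{r^{1/q'}}\,\|y\|_{X,r}
\quad\text{uniformly in }t\ge 0.
\end{equation*}

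Choosing $r$ so large that $c'/r^{1/q'}<1$, the operator $\mathcal{T}$ is a strict contraction on $(X,\|\cdot\|_{X,r})$; consequently $I+\mathcal{T}$ is invertible in $X$, giving a unique solution $y=(I+\mathcal{T})^{-1}E\in X$ to \eqref{e:closed-loop-halfline}. The continuity $y\in C([0,\infty);\cD(A^\epsilon))$ is inherited from $E$ and from the continuity of $\mathcal{T}y$, the latter being a direct consequence of the same duality estimate applied to differences $\mathcal{T}y(t_2)-\mathcal{T}y(t_1)$. The main obstacle is precisely the availability of \eqref{e:da-iiic}: without the half--line integrability of $\sigma\mapsto B^*e^{A^*\sigma}{A^*}^\epsilon z$ in $L^q$, the Hölder step would only produce a local estimate and the factor $r^{-1/q'}$ needed for the contraction would be lost; everything else is a straightforward adaptation of the finite horizon argument, slightly simplified by the time--independence of $Q$.
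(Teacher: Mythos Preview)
Your proposal is correct and follows essentially the same route as the paper: recast \eqref{e:closed-loop-halfline} as $(I+LB^*Q)y=E$ in $X$, estimate $(e^{-rt}LB^*Qy(t),{A^*}^\epsilon z)_Y$ by duality, apply H\"older with exponents $q,q'$, and invoke the half-line regularity \eqref{e:da-iiic} to produce the contraction factor $r^{-1/q'}$. The only cosmetic difference is that the paper keeps the exponential weight $e^{\delta\cdot}$ from \eqref{e:da-iiic} inside the H\"older step (obtaining $[(r+\delta)q']^{-1/q'}$ instead of $(rq')^{-1/q'}$), whereas you use the unweighted $L^q(0,\infty;U)$ bound that follows {\em a fortiori} from \eqref{e:da-iiic}; either way the contraction holds for $r$ large.
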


\begin{proof}
The argument is pretty much the same employed in the proof of Lemma~\ref{l:closedloop}.
A technically decisive (distinct) element here comes from the extended (and enhanced) regularity in time of the operator $B^*e^{A^*\cdot} {A^*}^\epsilon$ over the half line $[0,\infty)$, which is guaranteed by
\cite[Proposition~3.2]{abl_2013}, recalled here as Proposition~\ref{p:2013-P3.2}.
The computation is included for the reader's convenience.

Let $x\in \cD(A^\epsilon)$ be given.
By setting $E(t)=e^{At}x$, and recalling the input-to-state map $L$, the integral equation 
\eqref{e:closed-loop-halfline} reads as $\big([I+LB^*Q]y(\cdot)\big)(t)=E(t)$, in short.
For any function $y(\cdot)\in X$ and any $z\in \cD({A^*}^\epsilon)$, we have
\begin{equation*}
\begin{split}
& |(e^{-rt}LB^*Qy(t),A^{*\epsilon}z)_Y
= \left|\int_0^t e^{-r(t-s)}(B^*Qy(s) e^{-rs}, B^* e^{A^*(t-s)}{A^*}^\epsilon z)_Y\,ds\right|
\\[1mm]
& \qquad \le \int_0^t e^{-r(t-s)}\|B^*Q\|_ {\cL(\cD(A^\epsilon),U)}\|y\|_{X,r}\, 
e^{-\delta (t-s)}\,\|e^{\delta (t-s)}B^*e^{A^*(t-s)}{A^*}^\epsilon z\|_U\,ds
\\[1mm]
& \qquad \le \|B^*Q\|_ {\cL(\cD(A^\epsilon),U)}\|y\|_{X,r}\,
\Big(\int_0^t e^{-(r+\delta)(t-s)q'}\,ds\Big)^{1/q'}\,\cdot
\\[1mm]
& \myspace\myspace\qquad
\cdot\Big(\int_0^t \|e^{\delta (t-s)}B^*e^{A^*(t-s)}{A^*}^\epsilon z\|_U^q\,ds \Big)^{1/q}
\\[1mm]
& \qquad \le 
\frac1{[(r+\delta)q']^{1/q'}}\,\|B^*Q\|_{\cL(\cD(A^\epsilon),U)}\,
\|e^{\delta\cdot}B^*e^{A^*\cdot} {A^*}^\epsilon\|_{\cL(Y,L^q(0,\infty;U))}\,\|y\|_{X,r}\,\|z\|_Y\,,
\end{split}
\end{equation*}
where $\delta$ belongs to the interval $(0,\omega\wedge \eta)$ ($\omega$ and $\eta$ being like in the Assumptions~\ref{a:ipo_2}).
The above estimate implies readily that there exists a constant $C>0$ such that
\begin{equation*}
\|LB^*Qy\|_{X,r} \le \frac{C}{(r+\delta)^{1/q'}}\, \|y\|_{X,r}
\|e^{\delta\cdot}B^*e^{A^*\cdot}A^{*\epsilon}\|_{\cL(Y,L^q(0,\infty;U))}
\end{equation*}
so that 
\begin{equation*}
\|LB^*Qy\|_{X,r} \le \frac12 \|y\|_{X,r}\,,
\end{equation*}
provided $r$ is sufficiently large.
The conclusive argument is standard.
\end{proof}

{\em Proof of Theorem~\ref{t:are-uniqueness}.} 
Let $y_0\in Y$, and let $(\hat{y},\hat{u})$ the optimal pair of the optimal control problem
\eqref{e:state-eq}-\eqref{e:cost} (with $T=+\infty$), corresponding to the initial state $y_0$.
Recall that
\begin{equation*}
(Py_0,y_0)_Y = J(\hat{u}) = \int_0^\infty \|R\hat{y}(s)\|^2_Z \, ds 
+ \int_0^\infty \|\hat{u}(s)\|_U^2 \,ds\,,
\end{equation*}
where the (optimal cost) operator $P$ is defined in terms of the optimal state semigroup 
$\Phi(t)$ via \eqref{e:optimal-cost-op}.
In addition, $P$ belongs to the class $\cQ$ and solves the ARE \eqref{e:ARE}; consequently,
by Lemma~\ref{l:are-integral} $P$ solves the integral form \eqref{e:are-integral} of the ARE.

Let now $Q\in \cQ$ be another solution to the ARE. 
By Lemma~\ref{l:id_fond_infty}, we know that for any given $y_0\in \cD(A^\epsilon)$, 
and any admissible control $u(\cdot)$, the identity \eqref{idbase} holds true (with $x$ replaced by
$y_0$), where $y(\cdot)$ is the solution to the state equation corresponding to the control $u$ and 
the initial state $y_0$.
Consequently,
\begin{equation*}
(Qy_0,y_0)_Y \le (Qy(t),y(t))_Y + J(u) \qquad 
\forall u\in L^2_{\textrm{loc}}(0,\infty;U)\,, \;\forall t>0\,;
\end{equation*}
by choosing in particular the admissible pair $(y_T,u_T)$ defined as follows,
\begin{equation*}
u_T = \hat{u}\cdot \chi_{[0,T]}\,, \qquad 
y_T(t) = 
\begin{cases}
\hat{y}(t) & \textrm{if $t\le T$} 
\\[2mm]
e^{At}y_0 + e^{A(t-T)}L\hat{u}(T) & \textrm{if $t>T$,}
\end{cases}
\end{equation*}
we find $(Qy_0,y_0)_Y \le (Qy_T(t),y_T(t))_Y + J(u_T)$, valid for arbitrary $t\ge T>0$.
By letting $t\to +\infty$ in the previous inequality, one obtains readily
\begin{equation} \label{e:prepre-polarization_1}
(Qy_0,y_0)_Y \le J(u_T) \qquad \forall y_0\in \cD(A^\epsilon)\, \quad \forall T>0\,,
\end{equation}
in view of the fact that the semigroup $e^{At}$ decays exponentially; so
$\|y_T(t)\|_Y\longrightarrow 0$, as $t\to +\infty$.

Observe now that
\begin{equation*}
\begin{split}
J(u_T) &= \int_0^\infty \|Ry_T(s)\|^2_Z \, ds + \int_0^\infty \|u_T(s)\|_U^2 \,ds
\\[2mm]
& = \int_0^T \|R\hat{y}(s)\|^2_Z \, ds\,
+ \int_T^\infty \big\|R\big(e^{sA}y_0+e^{(s-T)A}L\hat{u}(T)\big)\big\|^2\,ds
+ \int_0^T \|\hat{u}(s)\|_U^2 \,ds\,, 
\end{split}
\end{equation*}
so that $J(u_T)\longrightarrow J(\hat{u})$, as $T\rightarrow +\infty$.
Keeping this in mind, return to \eqref{e:prepre-polarization_1} and let $T\to +\infty$ to find
\begin{equation} \label{e:pre-polarization_1}
(Qy_0,y_0)_Y \le J(\hat{u})=(Py_0,y_0)_Y \qquad \forall y_0\in \cD(A^\epsilon)\,.
\end{equation}

\smallskip
On the other hand, given $y_0\in \cD(A^\epsilon)$ (and still with $Q\in \cQ$ another solution
to the ARE), let $y(\cdot)$ be the solution to the closed loop equation guaranteed 
by Lemma~\ref{l:closedloopinfty}; by construction $y\in L^2_{\textrm{loc}}(0,\infty;Y)$. 
Take now the control $u(\cdot)=-B^*Qy(\cdot)$, which belongs to $L^2_{\textrm{loc}}(0,\infty;U)$.
Then, the identity \eqref{idbase} holds true for any positive $t$, that is
\begin{equation}
\begin{split}
& (Qy_0,y_0)_Y = (Qy(t),y(t))_Y + \int_0^t \Big(\|Ry(s)\|_Z^2 + \|u(s)\|_U^2\Big)\,ds
\\[2mm]
& \qquad - \cancel{\int_0^t \big\|u(s)+B^*Qy(s)\big\|_U^2 \,ds}
\ge \int_0^t \|Ry(s)\|_Z^2\,ds + \int_0^t \|u(s)\|_U^2\,ds\,.
\end{split}
\end{equation}

As $t\to +\infty$, this shows that $Ry\in L^2(0,\infty;Z)$, $u\in L^2(0,\infty;U)$,
as well as $(Qy_0,y_0)_Y \ge J(u)$.
By minimality we find
\begin{equation} \label{e:pre-polarization_2}
(Qy_0,y_0)_Y \ge J(u)\ge J(\hat{u}) = (Py_0,y_0)_Y \qquad \forall y_0\in \cD(A^\epsilon)\,.
\end{equation}
On the basis of \eqref{e:pre-polarization_1} and \eqref{e:pre-polarization_2}, a standard polarization (first)
and density (next) argument confirms that $Q=P$, thereby concluding the proof of Theorem \ref{t:are-uniqueness}.
\qed


\appendix

\section{Instrumental results} \label{a:appendix}
In this appendix we gather several results (some old, some new) which single out certain regularity properties
-- in time and space -- of
\begin{itemize}
\item
the input-to-state map $L$,
\item
the operator $B^*Q(\cdot)$, when $Q(t)\in \cQ_T$, 
\item
the operator $B^*e^{A^*t}{A^*}^\epsilon$ and its adjoint.
\end{itemize}
All of them stem from the Assumptions~\ref{a:ipo_1} or \ref{a:ipo_2} on the (dynamics and control) 
operators $A$ and $B$.
The role played by the assertions of the novel Lemma~\ref{l:improved} and Lemma~\ref{l:gain-on-functions}
in the proofs of our uniqueness results is absolutely critical.

\smallskip
Initially, it is useful to recall from \cite{abl_2005} and \cite{abl_2013} the basic regularity 
properties of the input-to-state map $L$.
The first result pertains to the finite time horizon problem.
The reader is referred to \cite[Appendix~B]{abl_2005} for the details of the computations 
leading to the various statements in the following Proposition.


\begin{proposition}[\cite{abl_2005}, Proposition~B.3] \label{p:2005-B3} 
Let $L_s$ be the operator defined by
\begin{equation} \label{e:convolution-in-st}
L_s\colon u(\cdot) \longrightarrow 
(L_su)(t): = \int_s^t e^{A(t-r}Bu(r)\,dr\,, \qquad 0\le s\le t\le T\,.
\end{equation}
Under the Assumptions~\ref{a:ipo_1}, 
the following regularity results hold true.

\begin{enumerate}

\item
If $p=1$, then $L_s \in \cL(L^1(s,T;U),L^{1/\gamma}(s,T;[\cD({A^*}^\epsilon)]')$;

\item 
if $1 < p < \frac{1}{1-\gamma}$, then 
$L_s \in \cL(L^p(s,T;U),L^r(s,T;Y))$, with $r =\frac{p}{1-(1-\gamma)p}$;

\item 
if $p=\frac{1}{1-\gamma}$, then $L_s \in \cL(L^p(s,T;U),L^r(s,T;Y))$ for all $r\in [1,\infty)$;

\item 
if $p>\frac{1}{1-\gamma}$, then $L_s \in \cL(L^p(s,T;U),C([s,T];Y))$.

\end{enumerate}
Moreover, in all cases the norm of $L_s$ does not depend on $s$.
\end{proposition}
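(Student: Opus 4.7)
The plan is to exploit the decomposition $B^* e^{A^* \tau} = F(\tau) + G(\tau)$ from Assumptions~\ref{a:ipo_1} (transferred to the forward kernel $e^{A\tau} B$ via duality) and to analyze the resulting two convolution operators separately. For any $u \in L^p(s,T;U)$ and any test element $v$ in the predual of the target space, the pairing $(L_s u, v)$ rewrites via Fubini as an integral against $\int_r^T [F(t-r) + G(t-r)]^* v(t)\, dt$, reducing the problem to bounds for the two pieces.

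For the singular component, the operator bound $\|F(\tau)\|_{\cL(Y,U)} \le N\tau^{-\gamma}$ turns the $F$-contribution into a scalar convolution with the weak Lorentz kernel $\tau^{-\gamma} \in L^{1/\gamma,\infty}(0,T)$. The Hardy--Littlewood--Sobolev inequality (weak Young) then gives the $L^p \to L^r(s,T;Y)$ bound with $\tfrac{1}{r} = \tfrac{1}{p} - (1-\gamma)$ precisely for $1 < p < 1/(1-\gamma)$, proving assertion 2. For the endpoint $p = 1/(1-\gamma)$ (assertion 3), interpolation between indices slightly above and below yields all finite $r$. For $p > 1/(1-\gamma)$ (assertion 4), the Hölder conjugate satisfies $\gamma p' < 1$, so $\tau^{-\gamma} \in L^{p'}(0,T)$ and a direct Hölder estimate produces a uniformly bounded, continuous $Y$-valued map. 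The regular component, governed by $G(\cdot) \in \cL(Y, L^q(0,T;U))$ for every finite $q$ (assumption ii), is handled by the ordinary Young convolution inequality: choosing $q$ arbitrarily large absorbs any required integrability for the output, and always yields a stronger bound than that from $F$.

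Assertion 1 ($p=1$) is the delicate case. The $F$-contribution applied to $u \in L^1$ lands a priori only in weak $L^{1/\gamma,\infty}(s,T;Y)$, not in strong $L^{1/\gamma}$. The remedy is to measure values in the larger extrapolation space $[\cD({A^*}^\epsilon)]'$: by duality this amounts to controlling
\begin{equation*}
\int_s^T \Bigl(u(r), \int_r^T B^* e^{A^*(t-r)} {A^*}^\epsilon \phi(t)\, dt\Bigr)_U dr
\end{equation*}
for $\phi \in L^{(1/\gamma)'}(s,T; Y)$, where the inner integrand is upgraded to genuine $L^q$-integrability ($q \in (1,2)$) thanks to iiic) of the Assumptions. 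This is exactly what promotes the output from weak to strong $L^{1/\gamma}([\cD({A^*}^\epsilon)]')$. The $s$-independence of the operator norm is finally obtained by the substitution $\tau = t - r$: all kernel bounds depend only on the elapsed time, and extending $u$ by zero outside $[s,T]$ reduces estimates for $L_s$ uniformly to those for $L_0$ on $[0,T]$.

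The main obstacle is assertion 1: one has to pass through the duality with the extrapolation space and invoke iiic) in an essential way, since the naive bound on $F$ alone would produce only weak $L^{1/\gamma}$. The endpoint $p = 1/(1-\gamma)$ of HLS (assertion 3) is also a point that requires an interpolation argument rather than a direct convolution estimate.
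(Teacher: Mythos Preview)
The paper does not actually prove this proposition; it refers the reader to \cite[Appendix~B]{abl_2005} for the computations. So there is no in-paper proof to compare against, and I evaluate your argument on its own merits.

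Your treatment of assertions 2--4 via the splitting $e^{A\tau}B=F(\tau)^*+G(\tau)^*$, with the Hardy--Littlewood--Sobolev/weak Young inequality for the singular kernel $\tau^{-\gamma}$ and a separate estimate for $G$, is correct in spirit and is presumably what the original proof does. One minor looseness: $G(\tau)$ is not a scalar kernel, so ``Young's inequality'' does not literally apply; the right argument is that for any $z\in Y$ and $p>1$, H\"older gives
\[
\bigl|(L_{s,G^*}u(t),z)_Y\bigr|\le \int_s^t \|u(r)\|_U\,\|G(t-r)z\|_U\,dr
\le \|u\|_{L^p(s,T;U)}\,\|G(\cdot)z\|_{L^{p'}(0,T;U)}\le C_{p'}\|u\|_{L^p}\|z\|_Y,
\]
using assumption~ii). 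This yields $L_{s,G^*}\colon L^p\to L^\infty(s,T;Y)$ for every $p>1$, which is stronger than what is needed for 2--4.

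For assertion~1 your diagnosis is right---the $F$-part of the convolution with an $L^1$ input lands only in weak $L^{1/\gamma}$---but your proposed cure via iiic) does not work. Testing $L_su$ against $\psi\in L^{1/(1-\gamma)}(s,T;\cD({A^*}^\epsilon))$ and writing $\psi(t)={A^*}^{-\epsilon}\phi(t)$ with $\phi(t)\in Y$ produces the kernel $B^*e^{A^*(t-r)}{A^*}^{-\epsilon}$, with a \emph{negative} power of $A^*$, whereas iiic) concerns $B^*e^{A^*\cdot}{A^*}^{\epsilon}$. What the extrapolation space actually buys is an upgrade of the $G$-component: by iiia), $G(\cdot){A^*}^{-\epsilon}\in\cL(Y,C([0,T];U))$, so the $G^*$-part of $L_s$ maps $L^1$ into $L^\infty(s,T;[\cD({A^*}^\epsilon)]')$. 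The $F$-component, however, already acts boundedly from $Y$ to $U$ with the same bound $N\tau^{-\gamma}$; passing to $[\cD({A^*}^\epsilon)]'$ does not improve its singularity, and the critical H\"older pairing still fails because $\tau^{-\gamma}\notin L^{1/\gamma}(0,T)$. Note that the infinite-horizon analogue recorded in the paper as Proposition~\ref{p:2013-P3.6}(i) states only $r\in[1,1/\gamma)$, an open interval; this is exactly what the decomposition delivers, and is what your argument (correctly read) would prove here as well. If the endpoint $r=1/\gamma$ is genuinely attained in the finite-horizon case, it requires an idea beyond iiic); as written, your argument for assertion~1 has a gap.
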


The space regularity in the last assertion can be actually enhanced. 
To be more precise, $L_s$ maps control functions $u(\cdot)$ which belong to $L^{q'}(s,T;U)$ into 
functions which take values in $\cD(A^\epsilon)$ ($q'$ being the conjugate exponent of $q$ in the Assumptions~\ref{a:ipo_1}).
We highlight this property -- appparently left out of the work \cite{abl_2005} -- as a separate result, since it will be used throughout in the paper.
The proof is omitted, as it is akin to (and somewhat simpler than) the one carried out to establish 
assertion (v) of the subsequent Proposition~\ref{p:2013-P3.6}.


\begin{lemma} \label{l:improved}
Let $\epsilon$ and $q$ be as in (iii) of the Assumptions~\ref{a:ipo_1}. 
Then, for the operator $L_s$ defined in \eqref{e:convolution-in-st} we have 
\begin{equation*}
L_s \in \cL(L^{q'}(s,T;U),C([s,T];\cD(A^\epsilon)))\,.
\end{equation*}
\end{lemma}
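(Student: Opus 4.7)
The plan is to leverage assumption iiic) of Assumptions~\ref{a:ipo_1} by a duality argument, exactly as one would expect: iiic) says the map $x \mapsto B^* e^{A^*\cdot}{A^*}^\epsilon x$ extends to an element of $\cL(Y, L^q(0,T;U))$, and its adjoint should then map $L^{q'}$ into $Y$, yielding the $\cD(A^\epsilon)$-regularity of the convolution after applying $A^\epsilon$.

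More concretely, I would first fix $t \in [s,T]$ and estimate, for $x \in \cD({A^*}^\epsilon)$ (which is dense in $Y$),
\begin{equation*}
\bigl((L_s u)(t), {A^*}^\epsilon x\bigr)_Y = \int_s^t \bigl(u(r), B^* e^{A^*(t-r)} {A^*}^\epsilon x\bigr)_U\, dr \le \|u\|_{L^{q'}(s,t;U)} \bigl\|B^* e^{A^*(t-\cdot)} {A^*}^\epsilon x\bigr\|_{L^q(s,t;U)};
\end{equation*}
changing variable $\sigma = t-r$ and using iiic), the last factor is bounded by $C \|x\|_Y$ uniformly in $t\in [s,T]$, where $C$ depends only on $T-s$ (not on $t$). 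This shows that $A^\epsilon (L_s u)(t)$ is meaningful as an element of $Y$, with $\sup_{t\in [s,T]} \|(L_s u)(t)\|_{\cD(A^\epsilon)} \le C\,\|u\|_{L^{q'}(s,T;U)}$.

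For continuity, given $s \le t_1 < t_2 \le T$, I would split
\begin{equation*}
(L_s u)(t_2) - (L_s u)(t_1) = \int_{t_1}^{t_2} e^{A(t_2-r)}Bu(r)\,dr + \bigl(e^{A(t_2-t_1)} - I\bigr)(L_s u)(t_1).
\end{equation*}
For the first summand, the very same duality estimate yields an $\cD(A^\epsilon)$-bound proportional to $\|u\|_{L^{q'}(t_1,t_2;U)}$, which tends to $0$ as $t_2 - t_1 \to 0$ by absolute continuity of the integral. For the second summand, since $(L_s u)(t_1) \in \cD(A^\epsilon)$ (i.e.\ $A^\epsilon(L_s u)(t_1) \in Y$) by the previous step, strong continuity of $e^{At}$ on $Y$ gives $\|(e^{A(t_2-t_1)} - I) A^\epsilon (L_s u)(t_1)\|_Y \to 0$. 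Right-continuity at $t = s$ is trivial since $(L_s u)(s) = 0$.

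I do not anticipate a serious obstacle here: the whole argument is a fairly clean application of duality plus a standard splitting for continuity. The one point that deserves care is the justification of the duality pairing, namely that the bracket above genuinely computes $(A^\epsilon (L_s u)(t), x)_Y$ — this is legitimate because $u$, viewed as an $L^{q'}$-function with values in $U$, pairs against the $L^q$-function $B^* e^{A^*(t-\cdot)}{A^*}^\epsilon x$, and $x \in \cD({A^*}^\epsilon)$ allows us to move ${A^*}^\epsilon$ to the other side rigorously before passing to the $Y$-limit by density of $\cD({A^*}^\epsilon)$ in $Y$.
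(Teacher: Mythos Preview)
Your proof is correct and follows essentially the same route the paper indicates: the paper omits the proof, remarking only that it is ``akin to (and somewhat simpler than)'' the proof of Proposition~\ref{p:2013-P3.6}(v), which is precisely the duality argument based on iiic) that you carry out (compare also Lemma~\ref{l:duali}, where the adjoint of $z\mapsto B^*e^{A^*\cdot}{A^*}^\epsilon z$ is computed explicitly and identified with $h\mapsto \int A^\epsilon e^{A\cdot}Bh$). Your splitting for continuity is standard and poses no issue.
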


\smallskip
A counterpart of Proposition~\ref{p:2005-B3} specific for the infinite time horizon problem 
was proved in \cite[Proposition~3.6]{abl_2013}.
The collection of findings on the regularity of the input-to-state map $L$ is recorded here for the reader's convenience.

\begin{proposition}[\cite{abl_2013}, Proposition~3.6] \label{p:2013-P3.6} 
Let $L$ be the operator defined by
\begin{equation*}
L\colon u(\cdot) \longrightarrow 
(L u)(t): = \int_0^t e^{A(t-r}Bu(r)\,dr\,, \qquad t\ge 0\,.
\end{equation*}
Under the Assumptions~\ref{a:ipo_2}, 
the following regularity results hold true.

\begin{enumerate}

\item[(i)]
$L \in \cL(L^1(0,\infty;U),L^r(0,\infty;[\cD({A^*}^\epsilon)]')$, for any $r\in [1,1/\gamma)$;

\item[(ii)]
$L \in \cL(L^p(0,\infty;U),L^r(0,\infty;Y))$, for any $p\in (1,1/(1-\gamma))$ and any 
$r\in [p,p/(1-(1-\gamma)p)]$; 

\item [(iii)]
$L \in \cL(L^\frac{1}{1-\gamma}(0,\infty;U),L^r(0,\infty;Y))$, for any 
$r\in [1/(1-\gamma),\infty)$; 

\item [(iv)]
$L \in \cL(L^p(0,\infty;U),L^r(0,\infty;Y)\cap C_b([0,\infty);Y))$, for any 
$p\in (1/(1-\gamma),\infty)$ and any $r\in [p,\infty)$; 

\item [(v)]
$L \in \cL(L^r(0,\infty;U),C_b([0,\infty);\cD(A^\epsilon))$, 
for any $r\in [q',\infty]$.
\end{enumerate}

\end{proposition}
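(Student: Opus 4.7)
My plan is to reduce all five statements to Young's inequality for convolutions on $(0,\infty)$, applied to the operator-valued kernel $t\mapsto e^{At}B$ regarded as a map $U\to X_\star$ for the target space $X_\star$ relevant to each item. The quantitative input is a pointwise bound
\begin{equation*}
\|e^{At}B\|_{\cL(U,X_\star)}\le Nt^{-\gamma}e^{-\eta t}+\phi_{X_\star}(t),
\end{equation*}
with $\phi_{X_\star}$ locally bounded and exponentially decaying at infinity. Combined with exponential stability \eqref{e:stability}, this produces all the mapping properties through $\|K\ast u\|_{L^r}\le\|K\|_{L^\sigma}\|u\|_{L^p}$ with $1/r=1/p+1/\sigma-1$.

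I would first establish the kernel estimate. For $X_\star\in\{[\cD({A^*}^\epsilon)]',Y\}$ I dualize, writing $\|e^{At}B\|_{\cL(U,X_\star)}=\|B^*e^{A^*t}\|_{\cL(X_\star^*,U)}$, and insert the decomposition \eqref{e:key-hypo}. The $F$-component contributes $Nt^{-\gamma}e^{-\eta t}$ directly from i)' of Assumptions~\ref{a:ipo_2}. The $G$-component, which by Assumptions~\ref{a:ipo_2} satisfies ii) and iiia) only on $[0,T]$, is extended to the half-line via the factorization $B^*e^{A^*t}=B^*e^{A^*T}e^{A^*(t-T)}$ for $t\ge T$, whose second factor decays exponentially by \eqref{e:stability}; this produces $\phi_{X_\star}$ of the required shape. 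Item (i) is then immediate from $K\in L^r(0,\infty)$ for $r\in[1,1/\gamma)$ and the trivial embedding $L^1\ast L^r\subset L^r$. Items (ii)-(iv) correspond to the several admissible regimes of Young's inequality applied to the singular factor $t^{-\gamma}$; the relation $1/r=1/p+1/\sigma-1$ parametrizes the exponents, with the borderline value requiring the weak-type/Marcinkiewicz refinement to hit the sharp endpoint $r=p/(1-(1-\gamma)p)$. The additional statement $Lu\in C_b$ in (iv) follows from dominated convergence once $p>1/(1-\gamma)$.

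Item (v) is subtler because the target space $\cD(A^\epsilon)$ is finer. I would test $Lu(t)$ against ${A^*}^\epsilon z$ for $z\in Y$, writing
\begin{equation*}
(A^\epsilon Lu(t),z)_Y=\int_0^t\bigl(u(s),\,B^*e^{A^*(t-s)}{A^*}^\epsilon z\bigr)_U\,ds,
\end{equation*}
and invoke iiic) of Assumptions~\ref{a:ipo_1}, upgraded to the global statement $e^{\delta\cdot}B^*e^{A^*\cdot}{A^*}^\epsilon\in\cL(Y,L^q(0,\infty;U))$ for each $\delta\in(0,\omega\wedge\eta)$ via the same semigroup factorization as above. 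H\"older's inequality with the conjugate pair $(q,q')$, together with absorption of the exponential weight into the $L^{q'}$ factor, then gives the uniform-in-$t$ bound $\|A^\epsilon Lu(t)\|_Y\le C\|u\|_{L^{q'}(0,\infty;U)}$. Continuity in $t$ follows from dominated convergence, and the remaining cases $r\in(q',\infty]$ come from routine embedding.

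The main obstacle I anticipate is the passage from the finite-horizon hypotheses of Assumptions~\ref{a:ipo_2} (which, apart from i)', are postulated only on $[0,T]$) to global-in-time estimates. The semigroup factorization employed above is standard but must be tracked with care: the exponential-weight constants degrade with the choice of $\delta$, and one has to verify that $\delta$ can be kept strictly positive below $\omega\wedge\eta$ without destroying the integrability exponents required in Young's inequality. Once this bookkeeping is performed, each statement reduces to a concrete computation with Young's or H\"older's inequality.
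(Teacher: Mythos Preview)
The paper does not give its own proof of this proposition; it is quoted verbatim from \cite{abl_2013}, Proposition~3.6, and recorded in the appendix ``for the reader's convenience.'' So there is no in-paper argument to compare against, only the underlying reference. Your overall strategy --- split $B^*e^{A^*t}=F(t)+G(t)$, apply Young's (and for the sharp endpoint in (ii), weak-type/Hardy--Littlewood--Sobolev) inequality to the $F$-part, extend the finite-interval hypotheses to $(0,\infty)$ via the factorization $B^*e^{A^*t}=B^*e^{A^*T}e^{A^*(t-T)}$ combined with \eqref{e:stability}, and handle (v) by testing against ${A^*}^{\epsilon}z$ and invoking the global upgrade of iiic) (i.e.\ Proposition~\ref{p:2013-P3.2}) --- is correct and is, in outline, the approach of \cite{abl_2013}.

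There is, however, one genuine gap in your write-up. For items (ii)--(iv) the target is $X_\star=Y$, and you claim a pointwise operator-norm bound $\|e^{At}B\|_{\cL(U,Y)}\le Nt^{-\gamma}e^{-\eta t}+\phi_Y(t)$ with $\phi_Y$ locally bounded. Hypothesis ii) of Assumptions~\ref{a:ipo_1} only asserts $G(\cdot)\in\cL(Y,L^p(0,T;U))$ for finite $p$: this is an \emph{integrated} mapping property and does not yield $\sup_{t\in[0,T]}\|G(t)\|_{\cL(Y,U)}<\infty$, nor even $t\mapsto\|G(t)\|_{\cL(Y,U)}\in L^p$. (Hypothesis iiia) does give a uniform pointwise bound, but only for $G(t)$ restricted to $\cD({A^*}^{\epsilon})$; that suffices for (i), where $X_\star=[\cD({A^*}^{\epsilon})]'$, not for (ii)--(iv).) The remedy is to treat the $G$-contribution to $Lu$ not through a kernel bound but directly through the mapping hypothesis: by duality, ii) gives $G(\cdot)^*\in\cL(L^{p'}(0,T;U),Y)$ for every $p'>1$, so $\big\|\int_{(t-T)\vee 0}^{t}G(t-s)^*u(s)\,ds\big\|_Y\le C_{p'}\|u\|_{L^{p'}((t-T)\vee 0,\,t;U)}$; the remaining tail $s\in[0,t-T]$ is handled by your semigroup factorization, and the $L^r$-in-$t$ bound follows by elementary estimates on the windowed $L^{p'}$-norm. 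Once this correction is made, the rest of your outline goes through.
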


\smallskip
Because they occur in the present work, besides being central to the analysis of \cite{abl_2013},
we need to recall the $L^p$-spaces with weights.
Set
\begin{equation*}
L^p_g (0,\infty;X) := \big\{f\colon (0,\infty) \longrightarrow X\,, \; 
g(\cdot) f(\cdot) \in L^p(0,\infty;X)\big\}\,,
\end{equation*}
where $g\colon (0,\infty) \longrightarrow \mathbb{R}$ is a given ({\em weight}) function.
We will use more specifically the exponential weights $g(t) = e^{\delta t}$, along with the following (simplified) notation:
\begin{equation*}
L^p_\delta (0,\infty;X) := \big\{f\colon (0,\infty) \longrightarrow X\,, \; 
e^{\delta\cdot}f(\cdot) \in L^p(0,\infty;X)\big\}\,.
\end{equation*}

\begin{remark}
\begin{rm}
As pointed out in \cite[Remark~3.8]{abl_2013}, all the regularity results provided by the statements contained in the Propositions~\ref{p:2005-B3} and \ref{p:2013-P3.6} extend readily to natural analogues involving $L^p_\delta$ spaces (rather than $L^p$ ones), maintaining the respective summability exponents $p$. 
\end{rm}
\end{remark}

\medskip
We now move on to a result which clarifies the regularity of the operator $B^*Q(\cdot)$, $Q(t)\in \cQ_T$,
when acting upon {\em functions} (with values in $\cD(A^\epsilon)$) rather than on {\em vectors} 
-- namely, on elements of the space $\cD(A^\epsilon)$.

\begin{lemma}\label{l:gain-on-functions}
Let $\epsilon$ be as in (iii) of the Assumptions~\ref{a:ipo_1}. 
If $Q(\cdot)\in \cQ_T$ and $f\in C([0,T];\cD(A^\epsilon))$, then 
\begin{equation*}
B^*Q(\cdot)f(\cdot)\in C([0,T];U)\,.
\end{equation*}

\end{lemma}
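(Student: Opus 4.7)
The plan is to establish joint continuity in the straightforward way, by the standard splitting
\begin{equation*}
B^*Q(t)f(t)-B^*Q(s)f(s) \;=\; B^*Q(t)\bigl[f(t)-f(s)\bigr] \;+\; \bigl[B^*Q(t)-B^*Q(s)\bigr]f(s),
\end{equation*}
and showing that both summands tend to zero in $U$ as $t\to s$, for any fixed $s\in[0,T]$.

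The first step is to extract a uniform-in-time operator bound from the defining property of the class $\cQ_T$. Indeed, since $B^*Q(\cdot)\in \cL(\cD(A^\epsilon),C([0,T];U))$, there exists a constant
\begin{equation*}
M \;:=\; \|B^*Q(\cdot)\|_{\cL(\cD(A^\epsilon),C([0,T];U))}
\end{equation*}
such that for every $x\in\cD(A^\epsilon)$ and every $t\in[0,T]$ one has $\|B^*Q(t)x\|_U \le \sup_{\tau\in[0,T]}\|B^*Q(\tau)x\|_U \le M\,\|x\|_{\cD(A^\epsilon)}$. In particular, $\|B^*Q(t)\|_{\cL(\cD(A^\epsilon),U)}\le M$ uniformly in $t$, so the first summand is controlled by
\begin{equation*}
\bigl\|B^*Q(t)[f(t)-f(s)]\bigr\|_U \;\le\; M\,\|f(t)-f(s)\|_{\cD(A^\epsilon)},
\end{equation*}
which vanishes as $t\to s$ by the assumed continuity of $f\colon[0,T]\to\cD(A^\epsilon)$.

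For the second summand, fix the vector $f(s)\in\cD(A^\epsilon)$. By the same defining property of $\cQ_T$ applied to this fixed element, the function $\tau\mapsto B^*Q(\tau)f(s)$ belongs to $C([0,T];U)$, and hence
\begin{equation*}
\bigl\|[B^*Q(t)-B^*Q(s)]f(s)\bigr\|_U \;\longrightarrow\; 0 \qquad\text{as } t\to s.
\end{equation*}
Combining the two estimates yields continuity at $s$, and since $s\in[0,T]$ was arbitrary we obtain $B^*Q(\cdot)f(\cdot)\in C([0,T];U)$.

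There is no real obstacle here: once one notices that membership in $\cL(\cD(A^\epsilon),C([0,T];U))$ simultaneously delivers (i) a uniform pointwise-in-$t$ operator norm bound and (ii) continuity of the action on each frozen vector, the customary ``moving argument plus moving operator'' splitting closes the argument immediately. No further use of the Riccati structure, of iiic) of Assumptions~\ref{a:ipo_1}, or of regularity of $L_s$ is needed for this particular lemma; it is purely a consequence of the way the class $\cQ_T$ was defined in \eqref{e:class-tfinite}.
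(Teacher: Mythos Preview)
Your proof is correct and follows essentially the same approach as the paper's own proof: the identical splitting into $B^*Q(t)[f(t)-f(s)]$ and $[B^*Q(t)-B^*Q(s)]f(s)$, with the first term handled by the uniform bound coming from membership in $\cL(\cD(A^\epsilon),C([0,T];U))$ and the second by continuity in $t$ on the frozen vector $f(s)$.
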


\begin{proof}
We proceed along the lines of the proof of \cite[Lemma~A.3]{abl_2005}. 
Let $Q(\cdot)\in \cQ_T$ and let $t_0\in [0,T]$. 
By the definition of $\cQ_T$, there exists $c_1>0$ such that 
\begin{equation}\label{e:regB+Q}
\|B^*Q(t)z\|_U\le c_1 \|z\|_{\cD(A^\epsilon)}\qquad \forall t\in [0,T]\,, \; \forall z\in \cD(A^\epsilon)\,.
\end{equation} 
Since $f(t_0)\in \cD(A^\epsilon)$, then $B^*Q(\cdot)f(t_0)\in C([0,T];U)$.
Then
\begin{equation*}
\begin{split}
& \|B^*Q(t)f(t)- B^*Q(t_0)f(t_0)\|_U
\\[1mm]
& \qquad \le \|B^*Q(t)[f(t)-f(t_0)]\|_U + \|B^*Q(t)f(t_0)- B^*Q(t_0)f(t_0)\|_U
\\[1mm]
& \qquad \le c_1 \|f(t)-f(t_0)\|_{\cD(A^\epsilon)}+  \|[B^*Q(t)- B^*Q(t_0)]f(t_0)\|_U
=o(1)\,, \quad t\longrightarrow t_0\,.
\end{split}
\end{equation*} 

\end{proof}

\smallskip
Essential as well in this work, and more specifically in the proof of Theorem~\ref{t:are-uniqueness}, is
a stronger property of the operator $B^* e^{A^*\cdot}{A^*}^{\epsilon}$, namely, \eqref{e:da-iiic} below,
which holds true for appropriate $\delta$, under the Assumptions~\ref{a:ipo_2}.
Originally devised in \cite{abl_2013}, this result reveals that once the validity of iiic) of Assumptions~\ref{a:ipo_1} is ascertained on some bounded interval $[0,T]$, then the very same regularity estimate extends to the half line, along with an enhanced summability 
of the function $B^* e^{A^*\cdot}{A^*}^{\epsilon}x$, $x\in Y$.
The key to this is the exponential stability of the semigroup, i.e.~\eqref{e:stability};
see \cite[Proposition~3.2]{abl_2013}.

\begin{proposition}[\cite{abl_2013},~Proposition~3.2] \label{p:2013-P3.2}
Let $\omega$, $\eta$ and $\epsilon$ like in the Assumptions~\ref{a:ipo_2}.
For each $\delta \in (0,\omega\wedge \eta)$ the map 
\begin{equation*}
t \longrightarrow e^{\delta t} B^* e^{A^*t}{A^*}^{\epsilon}
\end{equation*}
has an extension which belongs to $\cL(Y,L^q(0,\infty;U))$.
In short, 
\begin{equation} \label{e:da-iiic}
B^* e^{A^*\cdot}{A^*}^{\epsilon}\in \cL(Y,L^q_\delta(0,\infty;U))\,.
\end{equation}

\end{proposition}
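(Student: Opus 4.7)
My plan is to split the half-line $[0,\infty)$ into the consecutive intervals $[kT,(k+1)T]$, $k\ge 0$, and reduce the weighted $L^q$-bound on each to the local $L^q$-bound on $[0,T]$ already available from iiic) of Assumptions~\ref{a:ipo_1} (which, under Assumptions~\ref{a:ipo_2}, holds on some bounded interval). The exponential decay of the semigroup $e^{A^*t}$ given by \eqref{e:stability} will do the rest, converting the uniform local estimate into a globally summable one with exponential weight.

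The key identity is the semigroup factorization
\begin{equation*}
B^*e^{A^*(s+kT)}{A^*}^\epsilon x = B^*e^{A^*s}{A^*}^\epsilon\bigl(e^{A^*kT}x\bigr),
\qquad s\in[0,T],\ k\in\N,
\end{equation*}
which rests on the commutation of the fractional power ${A^*}^\epsilon$ with $e^{A^*t}$. Fixing first $x\in\cD({A^*}^\epsilon)$, so that the quantity of interest is defined pointwise, I would then estimate
\begin{align*}
\int_0^\infty e^{q\delta t}\,\|B^*e^{A^*t}{A^*}^\epsilon x\|_U^q\,dt
&= \sum_{k=0}^\infty \int_0^T e^{q\delta(s+kT)}\,\|B^*e^{A^*s}{A^*}^\epsilon(e^{A^*kT}x)\|_U^q\,ds
\\
&\le e^{q\delta T} C^q \sum_{k=0}^\infty e^{q\delta kT}\,\|e^{A^*kT}x\|_Y^q
\\
&\le C^q M^q e^{q\delta T} \Bigl(\sum_{k=0}^\infty e^{-q(\omega-\delta)kT}\Bigr)\|x\|_Y^q\,,
\end{align*}
invoking iiic) on the inner integral with the vector $e^{A^*kT}x\in Y$, and \eqref{e:stability} to control $\|e^{A^*kT}x\|_Y$. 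The geometric series converges for every $\delta<\omega$, hence \emph{a fortiori} for $\delta\in(0,\omega\wedge\eta)$.

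The remaining step is to extend the inequality from $\cD({A^*}^\epsilon)$ to the whole of $Y$ by density, making use of the bounded extension $B^*e^{A^*\cdot}{A^*}^\epsilon\in\cL(Y,L^q(0,T;U))$ provided by iiic). I expect the main technical bookkeeping here, rather than the arithmetic of the geometric series, to be checking that the local extensions on the intervals $[kT,(k+1)T]$ glue into a single measurable representative of $e^{\delta\cdot}B^*e^{A^*\cdot}{A^*}^\epsilon x$ on $[0,\infty)$; this should follow from the consistency of the extension with the pointwise semigroup identity displayed above, producing the claimed element of $\cL(Y,L^q_\delta(0,\infty;U))$.
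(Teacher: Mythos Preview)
Your proposal is correct and follows essentially the same route the paper sketches: the paragraph preceding the Proposition explains that the result is obtained by carrying the local estimate iiic) on $[0,T]$ to the half-line via the exponential stability of the semigroup, and refers to \cite[Proposition~3.2]{abl_2013} for the details. Your interval decomposition $[kT,(k+1)T]$, the semigroup factorization $B^*e^{A^*(s+kT)}{A^*}^\epsilon x = B^*e^{A^*s}{A^*}^\epsilon\bigl(e^{A^*kT}x\bigr)$, and the resulting geometric series are precisely this argument made explicit. One incidental remark: your computation only uses $\delta<\omega$, so the restriction $\delta<\omega\wedge\eta$ in the statement is not sharp for \emph{this} particular estimate (the $\eta$ enters elsewhere in the theory, e.g.\ in the treatment of the $F$-component).
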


\medskip
We conclude providing a result that takes a more in-depth glance at the regularity of the operator
$B^* e^{A^*t}{A^*}^{\epsilon}$ and its adjoint.

\begin{lemma} \label{l:duali}
Under the Assumptions~\ref{a:ipo_2}, the following regularity results are valid,
for any $\delta\in (0,\omega\wedge \eta)$:
\begin{equation} \label{e:duali}
\begin{split}
& a) \quad e^{\delta \cdot}A^\epsilon e^{A\cdot} B \in \cL(L^{q'}(0,\infty;U),Y)\,,
\\[1mm] 
& b) \quad e^{\delta \cdot}B^* e^{A^*\cdot}{A^*}^{-\epsilon} \in \cL(L^r(0,\infty;Y),U) 
\qquad \forall r>\frac1{1-\gamma}\,. 
\end{split}
\end{equation}
The respective actions of the operators in \eqref{e:duali} are made explicit 
by \eqref{e:s-star} and \eqref{e:t-star}. 
\end{lemma}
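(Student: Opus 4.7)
Both parts of \eqref{e:duali} are to be read in the weak sense given by the explicit actions \eqref{e:s-star}--\eqref{e:t-star}, and each reduces by duality to a regularity statement for the adjoint kernel that has already been established in the excerpt. The skeleton is: fix a test vector in the target space, formally transpose the (only formally meaningful) unbounded operator inside the integrand against that vector, invoke the relevant earlier result, and close with H\"older's inequality.

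\textbf{For (a)}, given $u\in L^{q'}(0,\infty;U)$ and $y\in Y$, I would rewrite
\begin{equation*}
\Bigl(\int_0^\infty e^{\delta s}A^\epsilon e^{As}Bu(s)\,ds,\,y\Bigr)_Y
\;=\; \int_0^\infty \bigl(u(s),\,e^{\delta s}B^*e^{A^*s}{A^*}^\epsilon y\bigr)_U\,ds,
\end{equation*}
which is meaningful precisely because Proposition~\ref{p:2013-P3.2}, i.e.~\eqref{e:da-iiic}, asserts that $y\mapsto e^{\delta\cdot}B^*e^{A^*\cdot}{A^*}^\epsilon y$ defines an element of $\cL(Y,L^q(0,\infty;U))$. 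H\"older's inequality then bounds the displayed pairing by $C\|u\|_{L^{q'}(0,\infty;U)}\|y\|_Y$, which by arbitrariness of $y$ gives (a).

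\textbf{For (b)}, the analogous transposition reads, for $f\in L^r(0,\infty;Y)$ and $u\in U$,
\begin{equation*}
\Bigl(u,\,\int_0^\infty e^{\delta s}B^*e^{A^*s}{A^*}^{-\epsilon}f(s)\,ds\Bigr)_U
\;=\; \int_0^\infty \bigl(e^{\delta s}A^{-\epsilon}e^{As}Bu,\,f(s)\bigr)_Y\,ds.
\end{equation*}
The right-hand integrand is well defined because $A^{-\epsilon}$ extends by duality to a bounded map $[\cD({A^*}^\epsilon)]'\to Y$, and statement A6.~of Theorem~\ref{t:theory-tinfinite}, namely \eqref{e:tricky-regularity}, yields $e^{\delta\cdot}e^{A\cdot}B\in\cL(U,L^p(0,\infty;[\cD({A^*}^\epsilon)]'))$ for every $p\in[1,1/\gamma)$; composing with $A^{-\epsilon}$ delivers $e^{\delta\cdot}A^{-\epsilon}e^{A\cdot}B\in\cL(U,L^p(0,\infty;Y))$ in the same range. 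Since $r>1/(1-\gamma)$ is equivalent to $r'<1/\gamma$, I can choose $p=r'$ and apply H\"older to close (b).

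\textbf{Main obstacle.} The one subtle point is interpretive rather than analytic: neither $A^\epsilon e^{A\cdot}B$ nor $B^*e^{A^*\cdot}{A^*}^{-\epsilon}$ is a priori meaningful as a pointwise bounded operator with the naive domain/codomain, so the duality identities above should be regarded as defining the continuous extensions that appear in \eqref{e:s-star} and \eqref{e:t-star}. Once this viewpoint is adopted, the two claims reduce to routine combinations of Proposition~\ref{p:2013-P3.2} and \eqref{e:tricky-regularity} with H\"older's inequality, with no fresh estimate required.
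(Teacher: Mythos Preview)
Your proposal is correct and follows essentially the same route as the paper: both parts are obtained by duality from Proposition~\ref{p:2013-P3.2} (for (a)) and assertion A6.\ of Theorem~\ref{t:theory-tinfinite} (for (b)), with the paper phrasing this as ``compute the adjoint of the bounded operator $S$ (resp.\ $T$)'' and you phrasing it as ``pair against a test vector and apply H\"older,'' which are the same argument. Your remark that the unbounded expressions are to be understood as the adjoint-defined extensions \eqref{e:s-star}--\eqref{e:t-star} is exactly the paper's viewpoint.
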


\begin{proof}
The regularity results in \eqref{e:duali} are, in essence, dual properties of the regularity result
in Proposition~\ref{p:2013-P3.2} and of assertion A6.~in Theorem~\ref{t:theory-tinfinite}, respectively.
To infer (a), we introduce the notation $S$ for the mapping from $Y$ into $L^q(0,\infty;U)$
defined by 
\begin{equation*}
Y\ni z\longrightarrow [Sz](t) := e^{\delta t} B^*e^{A^*t}{A^*}^\epsilon z\,, \qquad\;t>0\,.
\end{equation*}
For any $z\in Y$ and any $h\in L^{q'}(0,\infty;U)$, it must be $S^*\in \cL(L^{q'}(0,\infty;U),Y)$ 
and more precisely, 
\begin{equation*} 
\begin{split}
\langle S^*h,z\rangle_Y 
&=\langle h,Sz\rangle_{L^{q'}(0,\infty;U),L^q(0,\infty;U)}
= \int_0^\infty \big\langle h(t),e^{\delta t}B^* e^{A^*t}{A^*}^\epsilon z\big\rangle_U\,dt
\\[1mm]
& 
= \Big\langle \int_0^\infty e^{\delta t}A^\epsilon e^{At} Bh(t)\,dt,z \Big\rangle_Y\,.
\end{split}
\end{equation*}
Therefore,
\begin{equation} \label{e:s-star}
S^*h = \int_0^\infty e^{\delta t}A^\epsilon e^{At} Bh(t)\,dt\,, \qquad h\in L^{q'}(0,\infty;Y)\,.
\end{equation} 

To achieve (b) of \eqref{e:duali}, we recall instead the assertion A6.~in Theorem~\ref{t:theory-tinfinite},
which further tells us that
\begin{equation*}
e^{\delta \cdot} A^{-\epsilon} e^{A\cdot} B\in \cL(U,L^{p}(0,\infty;Y))\,, \quad 
\text{for any $p$ such that $1\le p <\frac1{\gamma}$.}
\end{equation*}
Similarly as above, we introduce the notation $T$ for the mapping from $U$ into $L^p(0,\infty;Y)$ defined by
\begin{equation*} 
U\ni w\longrightarrow [Tw](t) := e^{\delta t}A^{-\epsilon} e^{At} Bw\,, \qquad t>0\,;
\end{equation*}
by construction, $T^*\in \cL(L^{p'}(0,\infty;Y),U)$ for all $p'>1/(1-\gamma)$.
More precisely, for any $w\in U$ and any $g\in L^{p'}(0,\infty;Y)$ we have 
\begin{equation*}
\begin{split}
\langle T^*g,w\rangle_U & = \langle g,Tw\rangle_{L^{p'}(0,\infty;Y),L^p(0,\infty;Y)} 
=\int_0^\infty \big\langle g(t),e^{\delta t} A^{-\epsilon} e^{At} Bw\big\rangle_Y\,dt
\\[1mm]
&= \Big\langle \int_0^\infty e^{\delta t}B^* e^{A^*t}{A^*}^{-\epsilon} g(t)\,dt, w\Big\rangle_U\,,
\end{split}
\end{equation*}
which establishes
\begin{equation}  \label{e:t-star}
T^*g=\int_0^\infty e^{\delta t}B^* e^{A^*t}{A^*}^{-\epsilon} g(t)\,dt\,. 
\qquad \forall g\in L^{p'}(0,\infty;Y)\,.
\end{equation}
The integrals in \eqref{e:s-star} and \eqref{e:t-star} are the sought 
respective representations of the adjoint operators in \eqref{e:duali}.

\end{proof}

\medskip


{\small
\section*{Acknowledgements}
\noindent
The research of F.~Bucci was partially supported by the Universit\`a degli Studi di Firenze
under the 2019 Project {\em Metodi ed Applicazioni per Equazioni Differenziali Ordinarie e a Derivate Parziali}.
Bucci is a member of the Gruppo Nazionale per l'Analisi Mate\-ma\-tica, la Probabilit\`a 
e le loro Applicazioni (GNAMPA) of the Istituto Nazionale di Alta Matematica (INdAM),
and participant to the GNAMPA Projects
{\em Controllabilit\`a di PDE in modelli fisici e in scienze della vita} (2019)
and {\em Problemi inversi e di controllo per equazioni di evoluzione e loro applicazioni} (2020).
She is also a member of the French-German-Italian Laboratoire International
Associ\'e (LIA) COPDESC in Applied Analysis.
\\
The research of P.~Acquistapace was partially supported by 
the PRIN-MIUR Project 2017FKHBA8 of the Italian Education, University and Research Ministry.
}
\medskip



\end{document}